\newcommand{\Lip}{\operatorname{Lip}\nolimits}
\def\h{\lambda}
\def\g{\rho}
\def\ksi{\xi}
\newcommand{\bh}{{\boldsymbol{\lambda}}}
\def\Pcurve{\mathbf{P_{curve}}}
\def\Pmec{\mathbf{P_{mec}}}
\def\PMEC{\mathbf{P_{MEC}}}
\newcommand{\Aut}{\operatorname{Aut}\nolimits}
\newcommand{\smax}{s_{max}}
\newcommand{\smin}{s_{M}}
\newcommand{\tcusp}{t_{cusp}}
\def\th{\theta}
\def\AA{\mathcal{A}}
\newcommand{\spann}{\operatorname{span}\nolimits}
\newcommand{\diag}{\operatorname{diag}\nolimits}
\newcommand{\sign}{\operatorname{sign}\nolimits}
\renewcommand{\Vec}{\operatorname{Vec}\nolimits}
\def\R{{\mathbb R}}
\newcommand{\SO}{\operatorname{SO(3)}\nolimits}
\newcommand{\SOtwo}{\operatorname{SO(2)}\nolimits}
\newcommand{\SEtwo}{\operatorname{SE(2)}\nolimits}
\newcommand{\SE}{\operatorname{SE(3)}\nolimits}
\newcommand{\tx}{{\tilde{x}}}
\newcommand{\ty}{{\tilde{y}}}
\newcommand{\tz}{{\tilde{z}}}
\newcommand{\tal}{\alpha}
\newcommand{\tbe}{\beta}
\newcommand{\tga}{\gamma}
\newcommand{\tpsi}{\tilde{\psi}}
\newcommand{\ulh}[1]{\underline{\h}^{(#1)}}
\newcommand{\be}{{\mathbf{e}}}
\newcommand{\bn}{{\mathbf{n}}}
\newcommand{\bx}{{\mathbf{x}}}
\newcommand{\by}{{\mathbf{y}}}
\newcommand{\bB}{{\mathbf{B}}}
\newcommand{\bN}{{\mathbf{N}}}
\newcommand{\bT}{{\mathbf{T}}}
\newtheorem{theorem}{Theorem}
\newtheorem{cor}{Corollary}
\newtheorem{lemma}{Lemma}
\newtheorem{definition}{Definition}
\newtheorem{remark}{Remark}
\newtheorem{con}{Conjecture}
\newtheorem{corollary}{Corollary}
\numberwithin{equation}{section}
\newcommand{\OSpace}{{\mathbb{R}^3\rtimes S^2}}
\DeclareMathAlphabet\gothic{U}{euf}{m}{n}
\newcommand{\gc}{\gothic{c}}
\newcommand{\bzero}{\mathbf{0}}
\newcommand{\blambda}{{\boldsymbol{\lambda}}}
\newcommand{\dgamma}{{\dot{\gamma}}}
\newcommand{\tR}{{\tilde{R}}}
\newcommand{\cA}{{\mathcal{A}}}
\newcommand{\cC}{{\mathcal{C}}}
\newcommand{\cE}{{\mathcal{E}}}
\newcommand{\cL}{{\mathcal{L}}}
\newcommand{\ul}{\mathbf}
\newcommand{\dif}{\mathrm{d}}
\newcommand{\desda}{\Leftrightarrow}
\begin{document}

\title[sub-Riemannian geodesics in $\SE$]{On sub-Riemannian geodesics in $\SE$ whose spatial projections do not have cusps}

\author{R. Duits}
 \address{Eindhoven University of Technology, The Netherlands}
 \email{r.duits@tue.nl}

 \author{A. Ghosh}
 \address{Link\"oping University, Sweden}
 \email{arpan.ghosh@liu.se}

 \author{T.C.J. Dela Haije}
 \address{Eindhoven University of Technology, The Netherlands}
 \email{t.c.j.dela.haije@tue.nl}

 \author{A. Mashtakov}
 \address{Eindhoven University of Technology, The Netherlands}
 \email{a.mashtakov@tue.nl}

\begin{abstract}
We consider the problem $\Pcurve$ of minimizing $\int \limits_0^L \sqrt{\xi^2 + \kappa^2(s)} \, {\rm d}s$ for a curve $\bx$ in $\R^3$ with fixed boundary points and directions. Here the total length $L\geq 0$ is free, $s$ denotes the arclength parameter, $\kappa$ denotes the absolute curvature of $\bx$, and $\ksi>0$ is constant.
We lift problem $\Pcurve$ on $\R^3$ to a sub-Riemannian problem $\Pmec$ on $\SE/(\{\ul{0}\}\times \SOtwo)$. Here, for admissible boundary conditions, the spatial projections of sub-Riemannian geodesics do not exhibit cusps and they solve problem $\Pcurve$. We apply the Pontryagin Maximum Principle (PMP) and prove Liouville integrability of the Hamiltonian system. We derive explicit analytic formulas for such sub-Riemannian geodesics, relying on the co-adjoint orbit structure, an underlying Cartan connection, and the matrix representation of $\SE$ arising in the Cartan-matrix. These formulas allow us to extract geometrical properties of the sub-Riemannian geodesics with cuspless projection, such as planarity conditions, explicit bounds on their torsion, and their symmetries. Furthermore, they allow us to parameterize all admissible boundary conditions reachable by geodesics with cuspless spatial projection. Such projections lay in the upper half space. We prove this for most cases, and the rest is checked numerically.
Finally, we employ the formulas to numerically solve the boundary value problem, and visualize the set of admissible boundary conditions.
\end{abstract}

\subjclass{53C17, 22E30, 49J15}

\keywords{Sub-Riemannian geometry, special Euclidean motion group, Pontryagin Maximum Principle, geodesics}

\maketitle

\section{Introduction}
\label{intro}
In the space of smooth curves in $\R^3$, we define the energy functional
\begin{equation}\label{EnerFunc}
  \cE(\bx) := \int_0^L \! \sqrt{\xi^2+\kappa^2(s)} \, \dif s,
  \quad \cE : \cC^{\infty}(\R, \R^3) \rightarrow \R^+,
\end{equation}
with $L\in\R^+$ being the length (free) of a curve $s \mapsto \bx(s) \in \R^3$.
Here $\xi>0$ is a constant, $s$ denotes the arclength of the curve $\bx$ and $\kappa : (0,L)\rightarrow \R^+$ denotes the absolute curvature $\kappa(s) = \|\bx''(s)\|$  of the curve $\bx$ for all $s \in (0,L)$.

In this paper we consider the problem $\Pcurve$ of minimizing the functional $\cE(\bx)$ among all smooth curves $s \mapsto \bx(s)$ in $\R^3$, satisfying the boundary conditions (see Figure~\ref{fig:PcurveStat})
\[\bx(0)=\bx_0, \, \bx(L)=\bx_1 \in \R^3, \quad \bx'(0)=\ul{n}_0, \, \bx'(L)=\ul{n}_1 \in \mathrm{S}^2.\]
Here we parameterize $\ul{x}$ by spatial arclength, i.e.
$\|\ul{x}'(s)\|=1$, and via ordinary parallel transport on the tangent bundle $T(\R^{3})$ the tangent vector $\ul{x}'(s) \in T_{\ul{x}(s)}(\R^3)$ can be identified with a point $\ul{n}(s) \in S^{2}$.
\begin{figure}[ht]
\centering
\includegraphics[width=0.4\hsize]{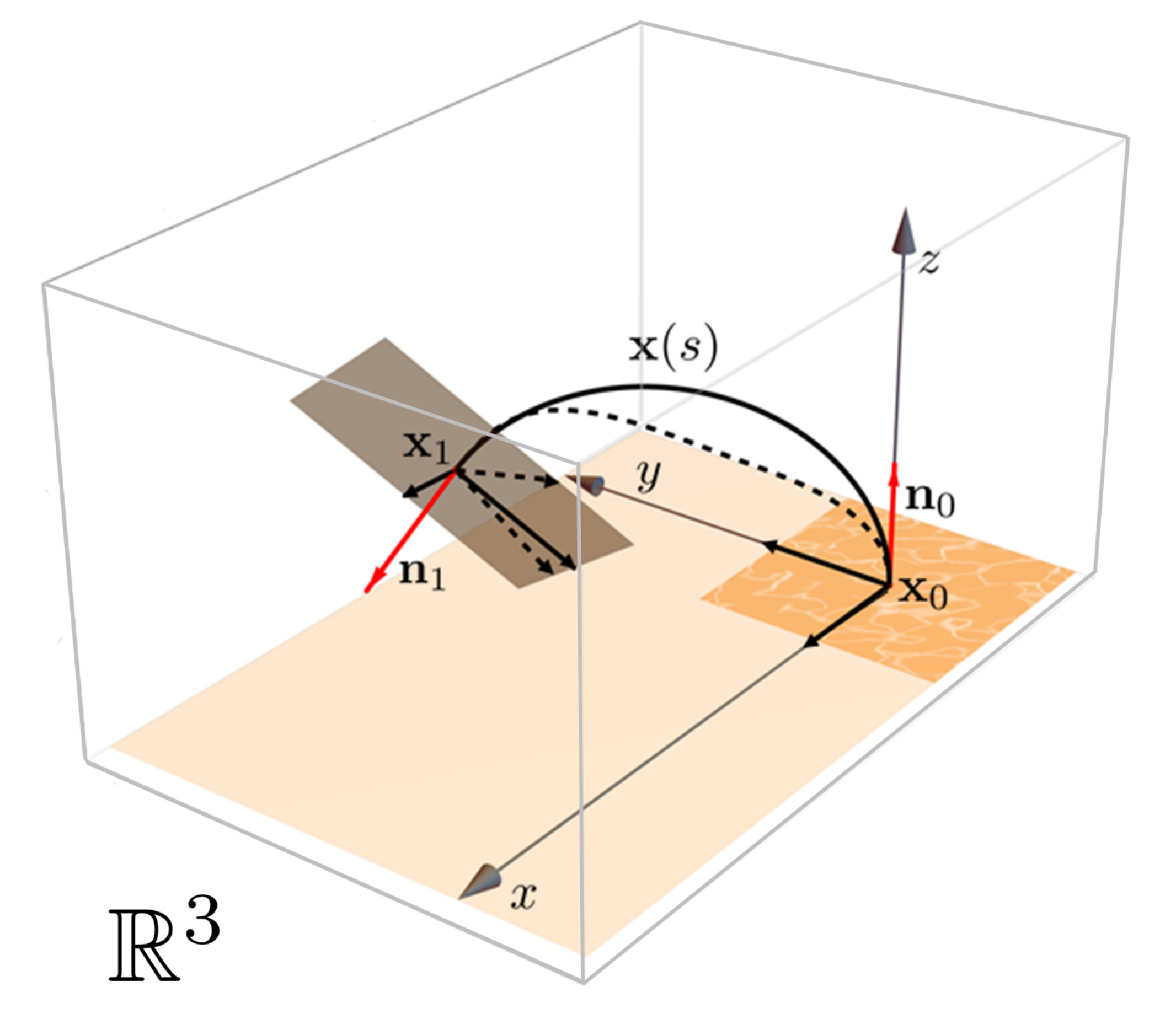}
\includegraphics[width=0.4\hsize]{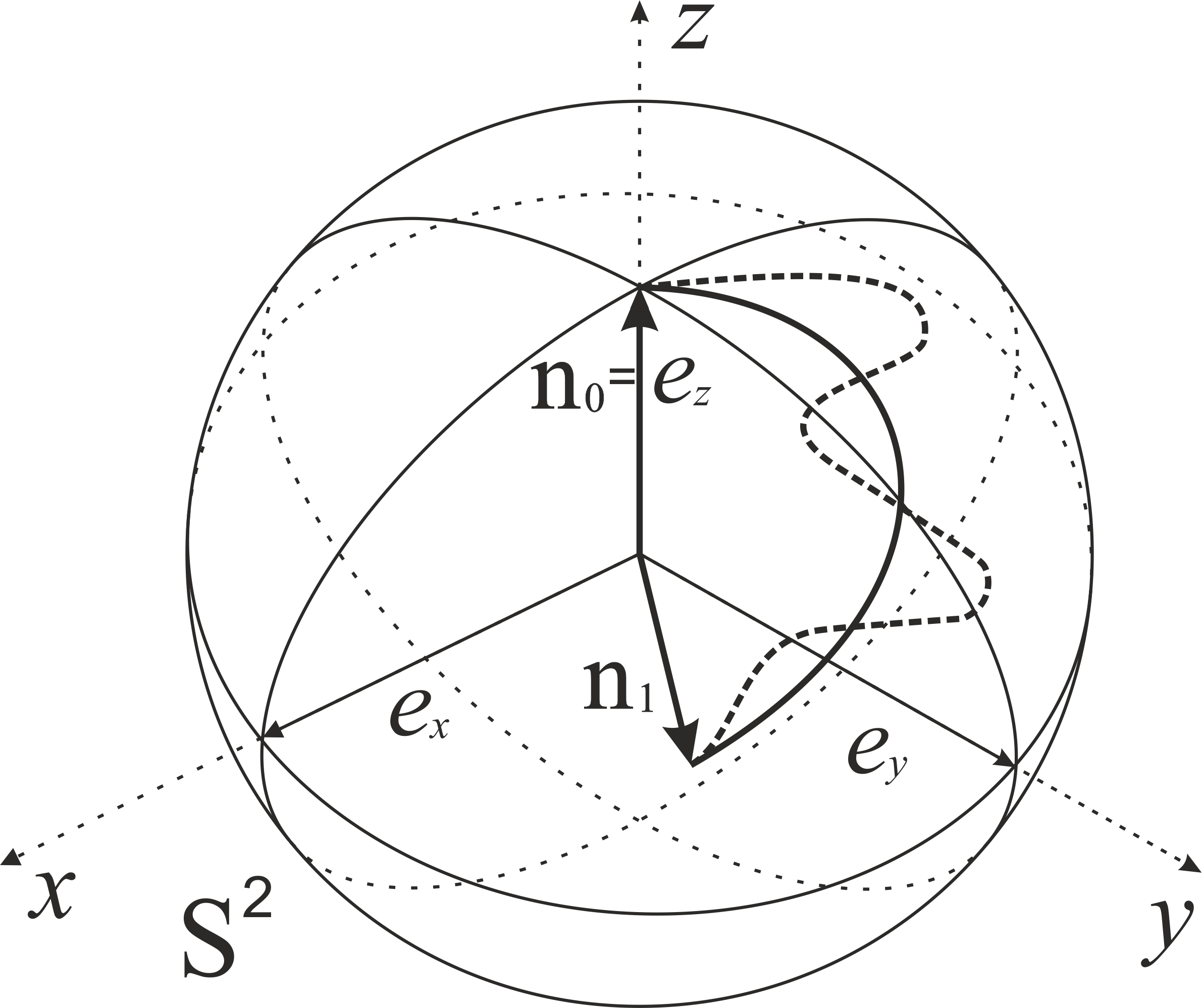}
\caption{Left: Illustration of problem $\Pcurve$. Isotropy in the brown tangent plane spanned by $\{\mathcal{A}_{1},\mathcal{A}_{2}\}$ is needed for a well-posed problem on the Lie group quotient $\SE/(\{\mathbf{0}\}\times \SOtwo)$. The tangent vectors $\bn_0$ and $\bn_1$ are depicted in red.
Right: the angular part $\ul{n}(s)=\ul{x}'(s)$ of the lifted curve $(\ul{x}(s),\ul{x}'(s)) \in \R^{3}\times S^{2}$.}
\label{fig:PcurveStat}
\end{figure}

The two dimensional analog of this variational problem was studied as a possible model of the mechanism used by the visual cortex V1 of the human brain to reconstruct curves which are partially corrupted or hidden from observation. The two dimensional model was initially due to Petitot (see \cite{Petitot, Petitot1999} and references therein). Subsequently,
Citti and Sarti~\cite{Citti, Sanguinetti2008}
 recognized the sub-Riemannian Euclidean motion group structure of the
problem.
In~\cite{Boscain} the existence of minimizers was studied by Boscain, Charlot, and Rossi. It turned out that only for certain end conditions the 2D problem $\Pcurve$ is well-posed. Characterization of the set of end conditions for which $\Pcurve$ is well-posed can be found in~\cite{DuitsSE2}. The more general 2D problem related to a mechanical problem was completely solved by Sachkov~\cite{Sachkov2011,Sachkov,Moiseev}, who in particular derived explicit formulas for the geodesics in sub-Riemannian arclength parameterization. Later, an alternative expression in spatial arclength parameterization for cuspless sub-Riemannian geodesics was derived in \cite{DuitsAMS2,Boscain2013a}. Application of problem $\Pcurve$ to contour completion in corrupted images was studied in~\cite{Mashtakov2013}.
 The problem was also studied by Hladky and Pauls in \cite{Hladky2010} and by Ben-Yosef and Ben-Shahar in \cite{Ben-Yosef2012}. However, many imaging applications such as DW-MRI (Diffusion Weighted Magnetic Resonance Imaging) require an extension to three dimensions \cite{Portegies,R.Duitsa,Franken2009,Haije}, which motivates us to study the three dimensional curves minimizing the energy functional $\cE(\bx)$.
\subsection{Statement of the problem $\Pcurve$}
Let $\bx_0, \bx_1 \in \R^3$ and $\bn_0, \bn_1 \in S^2 = \{\mathbf{v} \in \R^3 | \|\mathbf{v}\| = 1\}$. Our goal is to find an arc-length parameterized curve $s \mapsto \bx(s)$ such that
\begin{equation*}\label{goal1}
    \bx = \arg \inf \limits_{
{\small
        \begin{array}{c}
            \by \in \cC^{\infty}([0,L], \R^3),L \geq 0,     \\
            \by(0) = \bx_0, \; \by'(0) = \bn_0, \\
            \by(L) = \bx_1, \; \by'(L) = \bn_1.
        \end{array}
        }
    } \! \cE(\by).
\end{equation*}
We assume that the boundary conditions $(\bx_0,\bn_0)$ and $(\bx_1,\bn_1)$ are chosen such that a minimizer exists.
Due to rotation and translational invariance of the problem, it is equivalent to the problem with the same functional and boundary conditions $(\bzero,\be_z)$ and $(R_{\ul{n}_0}^T(\bx_1-\bx_0), R_{\ul{n}_0}^T\bn_1)$, where $\be_z$ denotes the unit vector in the $z$-axis in the right handed $\{x,y,z\}$ coordinate system and $R_{\ul{n}_0}\in \SO$ such that $\bn_0 = R_{\ul{n}_0}\be_z$. Therefore, without loss of generality, we set (unless explicitly stated otherwise) $\bx_0 = \bzero$ and $\bn_0 = \be_z$ for the remainder of the article. Hence the problem now is to find a sufficiently smooth arc-length parameterized curve $s \mapsto \bx(s)$ such that
\begin{equation*}\label{goal2}
    \bx = \arg \inf \limits_{
{\small
        \begin{array}{c}
            \by \in \cC^{\infty}([0,L], \R^3),L \geq 0,     \\
            \by(0) = \bzero, \; \by'(0) = \be_z,\\
            \by(L) = \bx_1,  \; \by'(L) = \bn_1.
        \end{array}
   } } \! \cE(\by).
\end{equation*}
We refer to the above problem as $\Pcurve$.

In this paper we use two different parameterizations: spatial arclength $s$ and sub-Riemannian arclength $t(s) = \int_0^s \sqrt{\ksi^2 + \kappa^2(\sigma)} \, {\rm d}\sigma$. We denote the derivative $\frac{d}{d s}$ by a prime, and $\frac{d}{d t}$ by a dot.
\subsection{Structure and Results of the Article}
\ \\
In Section~\ref{ch:2} we lift problem $\Pcurve$ on $\R^3$ to a sub-Riemannian problem $\Pmec$ on the quotient
\begin{equation}\label{OSpace}
\OSpace := \SE/(\{\mathbf{0}\}\times \SOtwo),
\end{equation}
where $\SOtwo$ is identified with all rotations in $\R^3$ about reference axis $\ul{e}_{z}$.
Such an extension and naming (`mec' refers to mechanical) was also done for the problem $\Pcurve$ on $\R^2$, cf.~\cite{DuitsSE2,Boscain2013a}.

To state the problem $\Pmec$ on the quotient~(\ref{OSpace}), we first resort to the corresponding left-invariant sub-Riemannian problem $\PMEC$ on the Lie group $\SE$.
We formulate problem $\PMEC$ in Definition~\ref{def:PMEC} and problem $\Pmec$ in Definition~\ref{def:Pmec}.
\begin{figure}[ht]
\centering
\includegraphics[width=0.7\hsize]{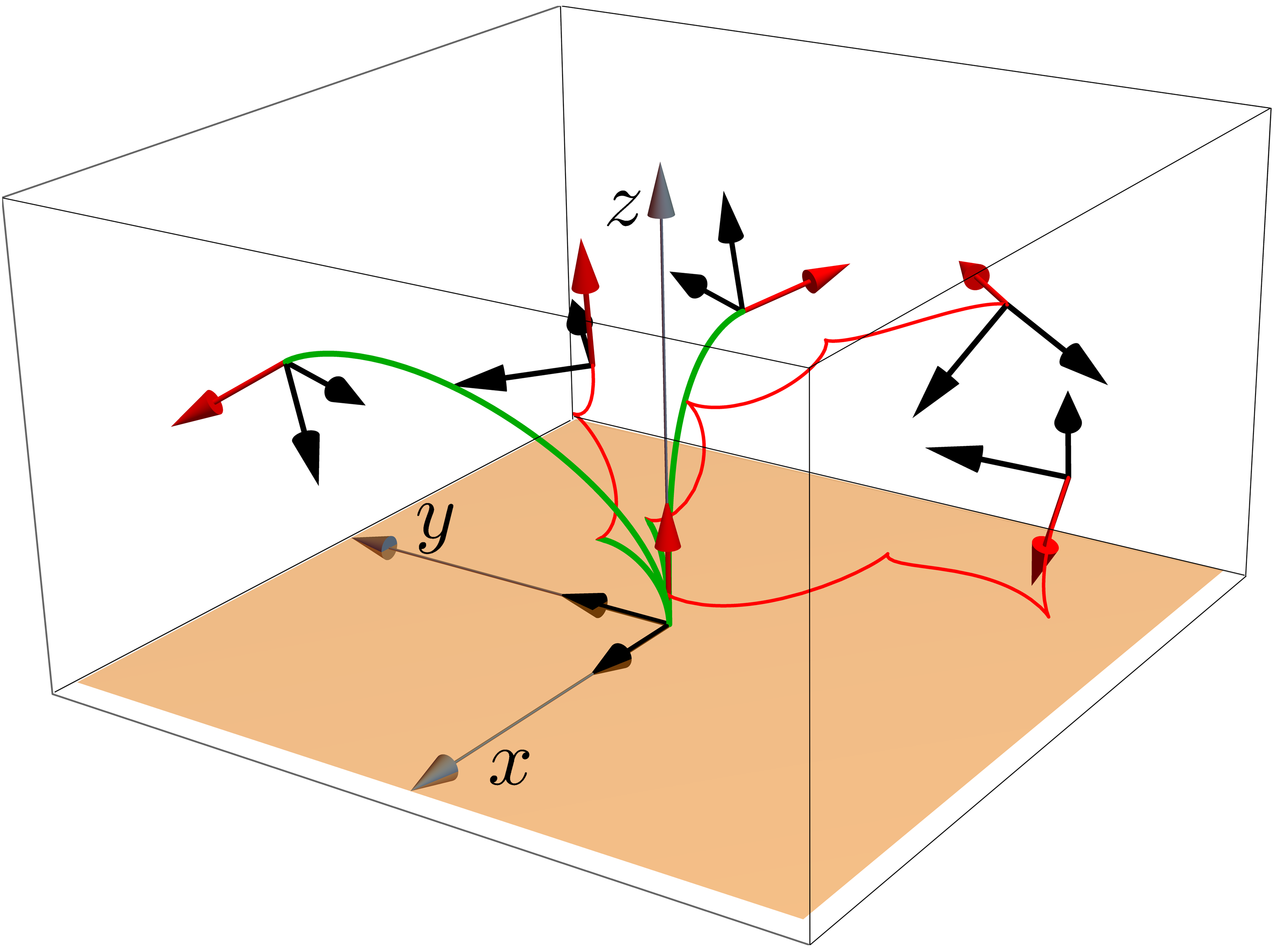}
\caption{The spatial projection of the geodesics of $\Pmec$ can have singularities (the cusp points). Here the spatial projection of the geodesics of $\Pmec$ is shown in green before the first cusp point, and in red after the first cusp point. The range $\mathcal{R}$ of the exponential map of the problem $\Pcurve$ consists of the end conditions reachable by  the cuspless geodesic in $\Pmec$ (i.e. the end conditions reachable by only the green curves). In all cases, the end condition $\bn_1 = R_{\ul{n}_1} \be_z$ is depicted in red. The other black arrows show the remaining vectors $R_{\ul{n}_1} \be_x$ and $R_{\ul{n}_1} \be_y$.}
\label{fig:GeosesicsWithCusps}
\end{figure}

The main result in Section~\ref{ch:2} is Theorem~\ref{lemma:One}, where we show the two requirements for sub-Riemannian geodesics $\gamma(\cdot)=(\ul{x}(\cdot),R(\cdot))$ in $\PMEC$ to have the property that
the corresponding spatially projected curve $\ul{x}(\cdot)$ is indeed a stationary curve of problem $\Pcurve$.
There we also show that these sub-Riemannian geodesics in $\SE$ relate to well-defined geodesics of problem $\Pmec$ on the quotient $\R^{3}\rtimes S^2$.
One of the two requirements is a vanishing momentum component, the other is a requirement on the end-condition $(\ul{x}_{1},\ul{n}_1=R_{\ul{n}_1}\ul{e}_{z})$ which should belong to a
set $\mathcal{R} \subset \OSpace$ that we express as the range of an exponential map of problem $\Pcurve$.
In fact this set $\mathcal{R}$ is precisely the set of end conditions for $\Pmec$ where the spatial projection of geodesics does not exhibit a cusp. The formal definition of a cusp will follow in Definition~\ref{def:cusptime}. For an illustration of cases $(\ul{x}_1,\ul{n}_1)\not\in \mathcal{R}$ where cusps occur on the spatial projection of sub-Riemannian geodesics, see Fig.~\ref{fig:GeosesicsWithCusps}.
The geodesic of $\Pmec$ is said to be \emph{ cuspless} if its spatial projection does not have a cusp. Study of cuspless geodesics in $\SE$ is important for imaging application, namely for tracking of elongated structures in 3D images (see \cite{R.Duitsa, Haije, Portegies}), where presence of cusps is typically undesirable in tracking algorithms. In fact, the presence of a cusp in the spatial projection of a minimizer does not reflect a smooth continuation of local orientations in the 3D images. Likewise to the 2D case~\cite{DuitsSE2} this may even be used as a criterium for not connecting the two boundary conditions.

In Section~\ref{sec:PMEC} we apply the Pontryagin maximum principle (PMP) \cite{notes,Vinter2010,Pontryagin} to problem $\PMEC$ in Subsection~\ref{ch:PMP}.
In Subsection~\ref{subsec:Integrability}, Theorem~\ref{th:integrability} we prove Liouville integrability.
In Subsection~\ref{ch:Cartan} we express the canonical equations of PMP in terms of the $-$ Cartan connection in Theorem~\ref{th:2}. Then a natural choice of $\SE$ matrix representation arises
in the matrix representation of the Cartan connection, i.e. the Cartan-matrix. We employ this in Theorem~\ref{th:3} containing one of the two key ingredients that we use for integrating the canonical equations of $\Pmec$.
The other ingredient is the well-known co-adjoint orbit structure in $\SE$ characterized in Lemma~\ref{lemme:coadjorb}.

In Section~\ref{sec:Pmecs} we combine the two ingredients to compute the first cusp-time (Theorem~\ref{sMax} in Subsection~\ref{ch:cusp}), and to integrate the canonical equations for geodesics of $\Pmec$. As a result, we obtain, for the first time, explicit analytic formulas for both problems $\Pcurve$ and $\Pmec$. These analytic formulas involve elliptic integrals of the first and the third kind.
This is summarized in Theorem~\ref{statcurv}, which is the main result of this article.
Subsequently, in Section~\ref{ch:geom}, we derive many geometric properties of the sub-Riemnnian geodesics such as:
\begin{itemize}
\item a uniform bound on torsion of the spatial part of the geodesics in Theorem~\ref{ThVM},
\item sufficient and necessary conditions for sub-Riemannian geodesics to be planar in Theorem~\ref{coplan} and Corollary~\ref{cor:cop},
for which we have global optimality in Corollary~\ref{cor:embedSe2},
\item monotony along a spatial axis (determined by the initial momentum) in Corollary~\ref{cor:mon},
\item in most cases (see Corollaries~\ref{th:A},~\ref{cor:zpw0},~\ref{th:B})
we prove that the spatial part of sub-Riemannian geodesics stays in the upper half space of the initial direction (if $\ul{n}_{0}=\ul{e}_{z}$ this upper half space
 is $z\geq 0$). In particular we prove $z\geq 0$ for all planar geodesics (Corollary~\ref{cor:zpw0}), and $z\geq 0$ for all geodesics departing from a cusp and ending in a cusp,
\item
in case of planar geodesics and/or geodesics departing from a cusp and ending in a cusp, we show in Corollaries~\ref{th:B}~and~\ref{cor:10} that $z=0$ can only be reached with opposite tangent $-\ul{e}_{z}$ via a U-shaped planar geodesic that departs from a cusp and ends in a cusp,
\item the rotational and reflectional symmetries as we show in Corollary~\ref{AllSym} in Subsection~\ref{ch:symm}.
\end{itemize}

In Section~\ref{ch:num} we conclude with numerical analysis of problem $\Pcurve$ on $\R^3$. Numerical experiments in Subsection~\ref{ch:Jac}, see Figure~\ref{fig:jacobian}, indicate that the first conjugate time comes after the first cusp time, as in the 2D-case \cite{Boscain2013a}. Numerical experiments in Subsection~\ref{ch:conj} on the set $\mathcal{R}$ and the cones of reachable angles, see Figure~\ref{coneslice}, put a conjecture on homeomorphic/diffeomorphic properties on the exponential map (cf.~Conjecture~\ref{Conjecture}).
Finally, we use the analytic formulas for the sub-Riemannian geodesics for numerical solutions to the boundary value problem as briefly explained in Subsection~\ref{ch:BVP}. \emph{Wolfram Mathematica} code for solving the boundary value problem can be downloaded from \url{http://bmia.bmt.tue.nl/people/RDuits/final.rar}.
\section{Problem $\Pcurve$ on $\R^3$, $\PMEC$ on $\SE$, and $\Pmec$ on $\R^3 \rtimes S^2$ and their connection}\label{ch:2}
In this section we relate the problem $\mathbf{P_{curve}}$ to a sub-Riemannian problem $\Pmec$ on the quotient
$\OSpace = \SE/(\{\mathbf{0}\}\times \SOtwo)$,
as was also done for the $\Pcurve$ on $\R^2$, cf. \cite{DuitsSE2,Boscain2013a}. To state the problem $\Pmec$ on this Lie group quotient, we first resort to the corresponding left-invariant sub-Riemannian problem $\PMEC$ in the Lie group $\SE$.
The group $\SE=\R^{3} \rtimes \SO$ denotes the Lie group of rigid body motions on $\R^3$, which is a semi-direct product $\rtimes$ of $\R^{3}$ and $\SO$. An element $g \in \SE$ is represented by the pair $(\bx,R) \in \R^3 \rtimes \SO$, and the group product is given by $g_1 g_2 = (\bx_1,R_1)(\bx_2,R_2) = (\bx_1 + R_1 \bx_2, R_1 R_2)$, and $g^{-1} = (-R^T \bx, R^T)$.
We define sub-Riemannian problem $\PMEC$ by means of the left-invariant frame (see Figure \ref{movingframe}).
\begin{figure}[ht]
\centering
\includegraphics[scale=.25]{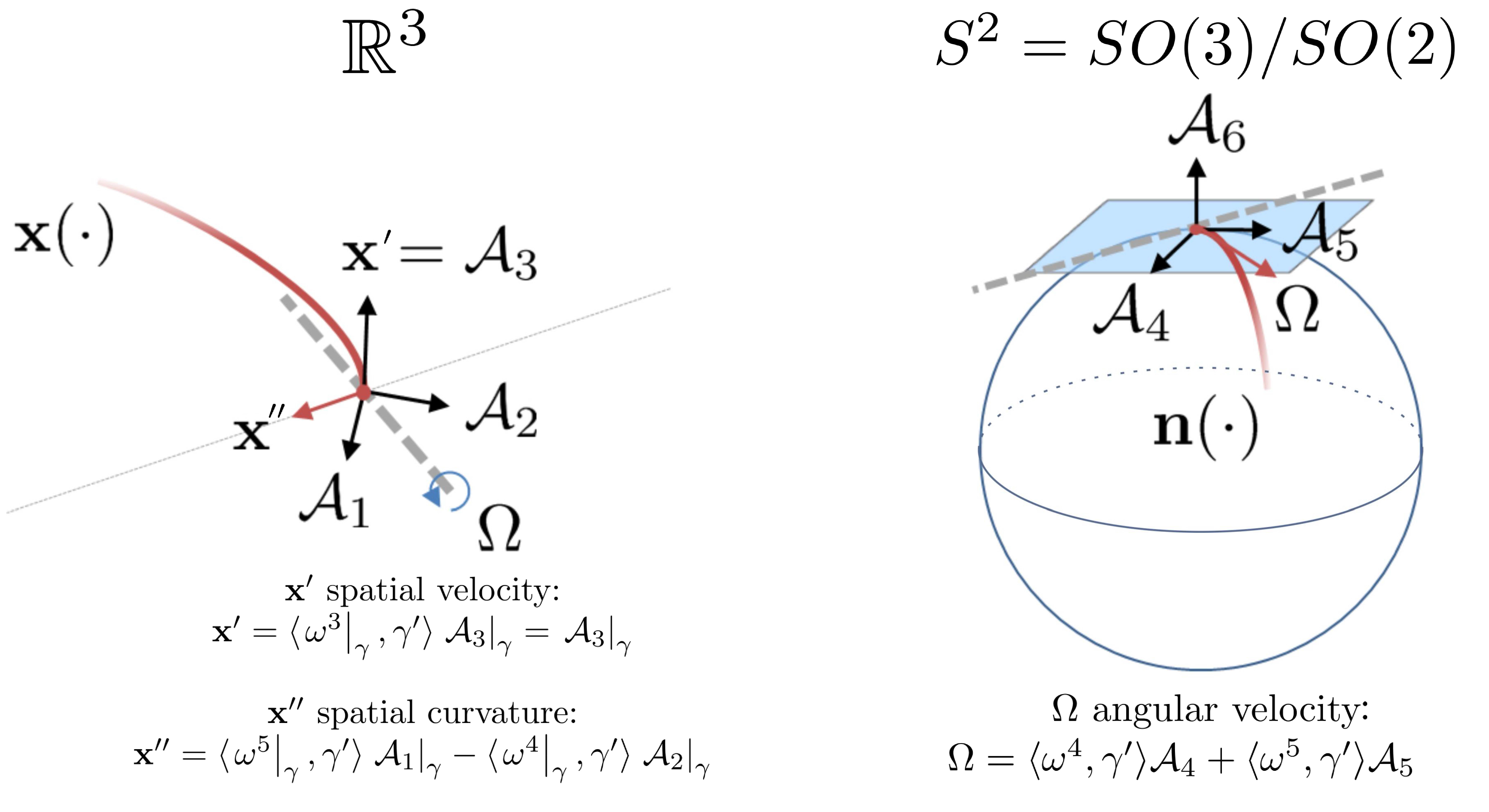}
\caption{Illustrations of the left-invariant frame representing a moving frame of reference along a curve on $\OSpace$.}
\label{movingframe}
\end{figure}

The left-invariant frame consists of the following left-invariant vector fields over $\SE$:
\begin{align*}\nonumber
\cA_1 &= \cos{\tal}\cos{\tbe}\,\partial_x + (\sin{\tal}\cos{\tga} + \cos{\tal}\sin{\tbe}\sin{\tga})\,\partial_y + (\sin{\tal}\sin{\tga} - \cos{\tal}\sin{\tbe}\cos{\tga})\,\partial_z, \\ \nonumber
\cA_2 &= -\sin{\tal}\cos{\tbe}\,\partial_x + (\cos{\tal}\cos{\tga} - \sin{\tal}\sin{\tbe}\sin{\tga})\,\partial_y + (\cos{\tal}\sin{\tga} + \sin{\tal}\sin{\tbe}\cos{\tga})\,\partial_z, \\ \nonumber
\cA_3 &= \sin{\tbe}\,\partial_x - \cos{\tbe}\sin{\tga}\,\partial_y +  \cos{\tbe}\cos{\tga}\,\partial_z, \\ \nonumber
\cA_4 &= -\cos{\tal}\tan{\tbe}\,\partial_{\tal} + \sin{\tal}\,\partial_{\tbe} +  \cos{\tal}\sec{\tbe}\,\partial_{\tga}, \\ \nonumber
\cA_5 &= \sin{\tal}\tan{\tbe}\,\partial_{\tal} + \cos{\tal}\,\partial_{\tbe} -  \sin{\tal}\sec{\tbe}\,\partial_{\tga}, \\ \label{MovFra}
\cA_6 &= \partial_{\tal},
\end{align*}
where we parameterize $\R^3$ by $\{x,y,z\}$ and $\SO$ by angles $\{\tal,\tbe,\tga\}$ with $\tal\in (-\pi,\pi]$, $\tbe\in [-\frac{\pi}{2},\frac{\pi}{2}]$ and $\tga\in (-\pi,\pi]$ such that
\begin{equation} \label{eq:Rparam}
\SO\ni R = \left( \begin{array}{ccc}
            1 & 	0  & 	0          \\
            0 & \cos{\tga} & -\sin{\tga}   \\
            0 & \sin{\tga} &  \cos{\tga}
        \end{array}\right)
	\left( \begin{array}{ccc}
             \cos{\tbe} & 0 & \sin{\tbe}   \\
            	   0 	& 1 & 	0  	   \\
            -\sin{\tbe} & 0 & \cos{\tbe}
        \end{array}\right)
	\left( \begin{array}{ccc}
            \cos{\tal} & -\sin{\tal}  & 0    \\
            \sin{\tal} &  \cos{\tal}  & 0    \\
            	0      & 	0         & 1
        \end{array}\right).
        \end{equation}
Since $R \be_z =\ul{n}=(\sin\tbe, -\sin\tga\cos\tbe, \cos\tga\cos\tbe)^T$ we have that $(\tga,\tbe)$ are spherical coordinates on $S^2$. One needs  multiple charts to cover $S^2$.
However, outside of $\pm (1,0,0)^T$, this choice of spherical coordinates is 1-to-1 and regular, and there it is preferable over standard Euler angles (which are singular at the unity element $\ul{e}_{z}=(0,0,1)^T$), \cite[ch:2,~Fig.4]{DuitsIJCV}.

The corresponding co-frame is given by the co-vectors $\{\omega^1,\ldots,\omega^6\}$ satisfying
$$\langle \omega^i, \cA_j \rangle = \delta^i_j \mbox{ for } i, j \in \{1, \ldots, 6\},$$
with $\delta^i_j$ the Kronecker delta.
The structure constants $c_{i,j}^k$ of the Lie algebra of left-invariant vector fields in $\SE$ are given in Table \ref{tabcomA}.
{\small
\begin{table}[ht]
\begin{center}
\begin{tabular}{c|cccccc}
 & $\cA_1$ & $\cA_2$ & $\cA_3$ & $\cA_4$ & $\cA_5$ & $\cA_6$\\ \hline
$\cA_1$ &0 & 0 & 0 & 0 & $\cA_3$ & $-\cA_2$ \\
$\cA_2$ & 0 & 0 & 0 & $-\cA_3$ & 0 & $\cA_1$ \\
$\cA_3$ & 0 & 0 & 0 & $\cA_2$ & $-\cA_1$ & 0 \\
$\cA_4$ & 0 & $\cA_3$ & $-\cA_2$ & 0 & $\cA_6$ & $-\cA_5$ \\
$\cA_5$ & $-\cA_3$ & 0 & $\cA_1$ & $-\cA_6$ & 0 & $\cA_4$ \\
$\cA_6$ & $\cA_2$ & $-\cA_1$ & 0 & $\cA_5$ & $-\cA_4$ & 0
\end{tabular}
\caption{Table of Lie brackets $[\cA_i,\cA_{j}]=\cA_i \cA_j-\cA_j \cA_i=\sum_{k=1}^6 c^{k}_{i,j} \cA_k$.
\label{tabcomA}}
\end{center}
\end{table}
}

We consider the sub-Riemannian manifold $(M,\Delta,\mathcal{G}_{\ksi})$, \cite{Mont}, with
\begin{equation} \label{SRMAN}
\begin{array}{l}
M=\SE, \ \Delta = \spann \{ \cA_3, \cA_4, \cA_5 \}, \textrm{ and }\mathcal{G}_{\ksi} = \ksi^2 \omega^3\otimes\omega^3 + \omega^4\otimes\omega^4 + \omega^5\otimes\omega^5.
\end{array}
\end{equation}
For those horizontal curves $\gamma$ (i.e. $\dot{\gamma} \in \Delta$) in $\SE$ that can be parameterized by spatial arclength one has $\int_{0}^{T}\!\sqrt{\mathcal{G}_{\ksi}|_{\gamma (t)}(\dot{\gamma}(t),\dot{\gamma}(t))}\,\dif t =  \int_0^L \! \sqrt{\ksi^2+\kappa^2(s)} \, \dif s$, which in view of Problem $\Pcurve$, motivates our choice of $(M,\Delta,\mathcal{G}_{\ksi})$. Details can be found in Appendix~\ref{app:A} (Eq.~\!(\ref{agree})).
The sub-Riemannian distance on $(\SE,\Delta,\mathcal{G}_{\ksi})$ is given by
\begin{equation}\label{eq:CCdistLip}
d(g,h)= \min \limits_{
    \begin{array}{c}
    \gamma \in \Lip([0,T], \SE), T\geq 0, \\
    \dot{\gamma} \in \Delta, \
    \gamma(0)=g, \gamma(T)=h
    \end{array}} \int \limits_{0}^T
    \sqrt{\left. \mathcal{G}_{\ksi}\right|_{\gamma(t)}(\dot{\gamma}(t),\dot{\gamma}(t))}\, {\rm d}t.
\end{equation}
\begin{definition}\label{def:PMEC}
In problem $\PMEC$ on $\SE$, we aim for a Lipschitzian 
curve $\gamma : [0,T]\rightarrow \SE$, that satisfies the boundary conditions $\gamma(0)=e:=(\bzero,I)$ and $\gamma(T)=(\bx_1,R_1) \in \SE$, and minimizes the integral of sub-Riemannian length
$\int_{0}^{T}\!{\sqrt{\mathcal{G}_{\ksi}|_{\gamma (t)}(\dot{\gamma}(t),\dot{\gamma}(t))}\,\dif t}$ 
(with free $T$), 
  and has a velocity vector $\dgamma (t) \in \Delta \textrm{ for a.e. } t \in [0,T].$ 
\end{definition}
\begin{remark}
As we will show in Section~\ref{sec:PMEC}, $\PMEC$ is well-posed and the minimizers are smooth. So one may replace $\Lip([0,T], \SE)$ by $\cC^{\infty}([0,T], \SE)$ a posteriori (like we did in $\Pcurve$).
\end{remark}
\begin{definition}
A (sub-Riemannian) geodesic $\gamma$ of problem $\PMEC$ is a horizontal curve in $\SE$ (i.e.~$\dot{\gamma} \in \Delta$) whose sufficiently short arcs are minimizers of $\PMEC$.
\end{definition}
Next we will address the quotient structure (\ref{OSpace}) and the connection of problem $\Pcurve$ on $\R^3$ to problem $\PMEC$ on $\SE$, and problem $\Pmec$ on $\R^3 \rtimes S^2$.
\begin{remark} Throughout this article we identify $\SOtwo$ with all rotations in $\SO$ about the reference axis, which we choose to be $\ul{e}_{z}$ (i.e. $\SOtwo \equiv \SOtwo \oplus 1$). Furthermore,
$R_{\ul{n}}$ denotes \emph{\underline{any}} rotation
mapping $\ul{e}_{z}=(0,0,1)^T$ onto $\ul{n} \in S^{2}$, whereas $R_{\ul{a},\phi}$ denotes the counter-clockwise rotation about axis $\ul{a}$ over angle $\phi$.
The group $\SE$ acts transitively on $\R^3 \rtimes S^{2}$ by
\begin{equation} \label{act}
g \odot (\ul{y},\ul{n})=(\ul{x},R) \odot (\ul{y},\ul{n})  =(R\ul{y}+ \ul{x}, R \ul{n}).
\end{equation}
Elements in $\SE$ that map $(\ul{0},\ul{e}_{z})$ to itself are denoted by
\[
h_{\alpha}:=(\ul{0},R_{\ul{e}_{z},\alpha}) \in \{\ul{0}\} \times \SOtwo.
\]
Regarding (\ref{OSpace}) we note $
g_{1} \sim g_{2} \desda
g_{1} \odot (\ul{0},\ul{e}_{z})= g_{2} \odot (\ul{0},\ul{e}_{z}) \desda (g_{1})^{-1} g_{2} \in \{\ul{0}\} \times \SOtwo
$
for all $g_{1}, g_{2} \in \SE$.
For sober notation we write $(\ul{y},\ul{n}) \in \R^3 \rtimes S^2$. This represents the left coset
\begin{eqnarray*} \label{leftcoset}
(\ul{y},\ul{n}):=\{g \in \SE \;|\; g \sim (\ul{y}, R_{\ul{n}}) \} = \{(\ul{y},R) \in \SE \;|\; (\ul{y},R) \odot (\ul{0},\ul{e}_{z}) = (\ul{y}, \ul{n})\}\\
= \{(\ul{y}, R_{\ul{n}} R_{\ul{e}_{z},\alpha}) \in \SE \;|\; 0 \leq \alpha < 2\pi\}.\qquad \left.\right.
\end{eqnarray*}
This is similar to the common identification $S^2 = \SO/\SOtwo$.
\end{remark}
We obtain a well-defined distance on the quotient $\R^{3}\rtimes S^{2}$, recall (\ref{OSpace}) and~(\ref{eq:CCdistLip}), by
\begin{equation} \label{onthequotient}
\begin{array}{ll}
d_{\R^{3}\rtimes S^{2}}((\ul{0},\ul{e}_{z}),(\ul{y}_1,\ul{n}_{1})) &= \min \limits_{\alpha^1,\alpha^{2} \in [0,2\pi)} d(e h_{\alpha^1}, (\ul{y}_{1},R_{\ul{n}_1}) h_{\alpha^{2}}) \\
 &= \min \limits_{\alpha^1,\alpha^{2} \in [0,2\pi)} d(e, h_{\alpha^{1}}^{-1} (\ul{y}_1,R_{\ul{n}_{1}}) h_{\alpha^{2}-\alpha^1} h_{\alpha^{1}}) \\
 &= \min \limits_{\alpha \in [0,2\pi)} d(e, (\ul{y}_{1},R_{\ul{n}_{1}}) h_{\alpha}),
 \end{array}
\end{equation}
where we use both left-invariance and invariance under the specific conjugations
$g \mapsto h_{\alpha}^{-1} g h_{\alpha}$ and where we have set $\alpha=\alpha^{2}-\alpha^{1}$. Hence we get the following definition.
\begin{definition}\label{def:Pmec} Problem $\Pmec$ is defined as follows on $\R^{3}\rtimes S^2$.
Let $(\ul{y}_{1},\ul{n}_{1}) \in \R^{3} \rtimes S^{2}$.
Find
\[
[0,T]\ni t \mapsto (\ul{x}(t),\ul{n}(t))= \gamma(t) \odot (\ul{0},\ul{e}_{z}) \in \R^{3}\rtimes S^{2},
\]
with $\gamma$ a Lipschitzian curve in $\SE$ with velocity $\dot{\gamma} \in \Delta$, 
such that sub-Riemannian length
$
\int_0^T
\sqrt{\left.\mathcal{G}_{\xi}\right|_{\gamma(t)}(\dot{\gamma}(t),\dot{\gamma}(t))} {\rm d}t
$ is minimal under boundary conditions $\gamma(0)=(\ul{0},I)$ and
$\gamma(T)=(\ul{y}_{1},R_{\ul{n}_1}R_{\ul{e}_{z},\alpha})$,
where both $T \geq 0$ and $\alpha \in [0,2\pi)$ are free variables in the optimization process.
\end{definition}
In Section~\ref{ch:PMP} we introduce left-invariant Hamiltonians $\h_1,\ldots,\h_6$ linear on the fibers of cotangent bundle $T^{\ast}(\SE)$ and apply the Pontryagin maximum principle (PMP) to the problem $\PMEC$, where the Hamiltonian $H$ is expressed as
\begin{equation} \label{Hamiltonian}
H(\h) = \frac12\left(\ksi^{-2}\, \h_3^{2} + \h_4^2 + \h_5^2\right).
\end{equation}
To distinguish a momentum covector from its components in dual basis we represent momentum covectors $\h(t) = \sum^6_{i=1}\h_i(t)\omega^i|_{\gamma(t)}$ as row vectors $\bh$
\begin{equation}\label{eq:boldlam}
\bh:=(\bh^{(1)},\bh^{(2)}) \textrm{ with }\bh^{(1)}:=(\h_1,\h_{2},\h_3) \textrm{ and }\bh^{(2)}:=(\h_4,\h_5,\h_6),
\end{equation}
where we split $\bh$ into a spatial part $\bh^{(1)}$  and a rotational part $\bh^{(2)}$.
%

A geodesic of problem $\Pmec$ is a curve $(\ul{x}(\cdot),\ul{n}(\cdot)) \in \R^{3} \rtimes S^2$ whose sufficiently short arcs are minimizers of $\Pmec$.
First we will show that solutions to the problem $\Pmec$ in the quotient $\R^3 \rtimes \mathrm{S}^2$ are geodesics in problem $\PMEC$ in $\SE$ iff the initial momentum $\h(0)$ is chosen such that $\h_6=0$. 

Later on, for sub-Riemannian geodesics whose spatial projections
do not exhibit cusps (see the green curves in Fig.~\ref{fig:GeosesicsWithCusps}), we shall rely on the spatial arclength parameter $s$ as this parametrization produces much simpler formulas. To distinguish between derivatives we write
\[\dot{\gamma}(t):=\frac{d}{dt} \gamma(t) \textrm{ and } \gamma'(s):= \frac{d}{ds} \gamma(s).\]

The next theorem provides us the terminal conditions of interest, the appropriate choice of representant in each equivalent class, i.e. the $\alpha$ that minimizes (\ref{onthequotient}). In fact this sets the choice of $(\ul{y}_{1},R_{\ul{n}_{1}}) \in \SE$ in $\PMEC$ such that the spatial projection $\ul{x}^{*}(\cdot)$ of the optimizer of $\gamma^{*}(\cdot)=(\ul{x}^{*}(\cdot), R^{*}(\cdot))$ in $\PMEC$ coincides with the optimizer of $\mathbf{P_{curve}}$. The proof relies on notations and results in Subsection~\ref{ch:PMP}, but we formulate the theorem here as it fully explains the connection between problem $\Pcurve$ on $\R^{3}$, problem $\PMEC$ on $\SE$ and problem $\Pmec$ on $\R^{3}\rtimes S^{2}$. At this point the reader is advised to skip the proof and return to it after Subsection~\ref{ch:PMP}.

\begin{theorem} \label{lemma:One}
If the terminal point $g_{1}=(\ul{x}_1,R_{1}) \in \SE$ is chosen such that a corresponding minimizer $\gamma^{*}$ of $\PMEC$ satisfies\footnote{In Section~\ref{sec:PMEC} we will introduce control variables $u^3$, $u^4$, $u^5$ and will formulate the problem $\PMEC$ as an optimal control problem in $\SE$.
Moreover, it will follow from the Hamiltonian system that $\lambda_6$ is constant along extremals. 
}
\begin{equation} \label{ene}
u^3(t) := \langle\omega^3|_{\gamma^*(t)}, \dot{\gamma}^*(t) \rangle > 0, \textrm{ for all }t \in (0,T),
\end{equation}
then the minimizer $\gamma^{*}$ can be parameterized by spatial arclength $s$, and its spatial projection does not exhibit a cusp.
If moreover $g_{1}$  is chosen such that $\gamma^{*}$ has vanishing momentum component
\begin{equation} \label{tweee}
\lambda_{6}(t)=\lambda_{6}(0)=0, \textrm{ for all }t \in (0,T),
\end{equation}
then this yields the required minimum choice of $\alpha$ in (\ref{onthequotient}), and
the minimizer $\gamma^*(t)$ of $\PMEC$ provides the minimizer
$(\ul{x}^*(t),\ul{n}^*(t))=\gamma^{*}(t) \odot (\ul{0},\ul{e}_{z})$ of problem $\Pmec$.

Under these two requirements (\ref{ene}) and (\ref{tweee}) the spatial projection $\ul{x}^{*}(\cdot)$ of the geodesic $\gamma^{*}(\cdot)=(\ul{x}^{*}(\cdot), R^{*}(\cdot))$ of problem $\PMEC$ coincides with a stationary curve of problem $\Pcurve$.
\end{theorem}
\begin{proof} The proof can be found in Appendix~\ref{app:A}. It relies on notation and results
of Section~\ref{sec:PMEC}. See Figure~\ref{fig:lemma} for an illustration of an explicit case. \end{proof}
\begin{definition}\label{def:expPMEC}
Let
$\widetilde{Exp}_e: \{(\h(0),t) \in T^{*}_{e}(\SE) \times \R^+ \;|\; H(\h(0))=\frac{1}{2}\} \to \SE$ denote the exponential map of
$\PMEC$. This exponential map is the solution operator that solves the Hamiltonian system of PMP, with the Hamiltonian $H(\h(0))$ given by (\ref{Hamiltonian}), departing from $e=(\ul{0},I)$ and thereby it maps initial momentum $\h(0)$ and sub-Riemannian arclength $t$ onto the end-point $(\ul{x}(t),R(t)) \in \SE$ of the corresponding geodesic of problem $\PMEC$.
\end{definition}
For the required case $\h_6=0$ we will derive the operator
$\widetilde{Exp}_e((\h_{1}(0),\h_{2}(0),\h_{3}(0),\h_{4}(0),
\h_{5}(0),0),\; t)$ explicitly in Section~\ref{sec:Pmecs}.
\begin{definition}\label{def:R}
Let $\gothic{R}$ denote the set of boundary points $g_{1}=(\ul{x}_1,R_{1}) \in \SE$ 
reachable by geodesics $\gamma^*$ of problem $\PMEC$ satisfying the two requirements (\ref{ene}) and (\ref{tweee}).
Define $\mathcal{R}=\{(\ul{x}_{1},R_{1}\ul{e}_{z}) \; |\;
 (\ul{x}_{1},R_{1}) \in \gothic{R}\} \subset \R^{3} \rtimes S^{2}$.
\end{definition}
Next we formally define the exponential map of the problem $\Pcurve$, where we rely on the action of $\SE$ onto $\R^{3} \rtimes S^2$, recall (\ref{act}).
\begin{definition}\label{def:exp}
The exponential map $Exp: \mathcal{D}_{0} \to \R^{3} \rtimes S^{2}$ of $\Pcurve$ is defined by:
\[
\begin{array}{l}
Exp(\h(0),L) :=\widetilde{Exp}_{e}(\h(0),T(L)) \odot (\ul{0},\ul{e}_{z})
=\gamma^{*}(T(L)) \odot (\ul{0},\ul{e}_{z})=(\ul{x}^{*}(L),\frac{d}{ds}\ul{x}^{*}(L)),
\end{array}
\]
where $\ul{x}^{*}$ denotes the minimizer of $\Pcurve$, $\gamma^{*}$ denotes a minimizer of $\PMEC$, and with domain
\begin{equation} \label{domain}
\begin{array}{l}
\mathcal{D}_{0}=\{(\h(0),L) \in \mathcal{D} \;| \h_{6}(0)=0\}, \textrm{ with } \\
\mathcal{D}=\{(\h(0),L) \in T^{*}_{e}(\SE) \times \R^+ \;|\; H(\h(0))=\frac{1}{2}, \ L\leq \smax(\h(0)), \ \bh^{(1)}(0) \neq \ul{0}\},
\end{array}
\end{equation}
 where $\smax(\h(0)) \in \R \cup \{\infty\}$ is the maximal length of the spatial projection $\bx^*$ of the sub-Riemannian geodesic $\gamma^*$ to the first cusp.
\end{definition}
We exclude $\bh^{(1)}(0) = \ul{0}$ from the domain $\mathcal{D}$ to avoid pure rotations which are not solutions to the problem $\Pcurve$. This is also done in the $\SEtwo$ case~\cite[Remark 5.5]{DuitsSE2}. By Theorem~\ref{lemma:One} we have the following result.
\begin{corollary} \label{cor:main}
The set $\mathcal{R}$ equals the range of the exponential map of $\Pcurve$:
\begin{equation}\label{eq:rangePcurveSE3}
\mathcal{R}= \{ Exp(\lambda(0),L) \; |\; (\lambda(0),L) \in \mathcal{D}_{0}\} \subset \R^{3} \rtimes S^2,
\end{equation}
 i.e. it coincides with all possible end conditions $(\ul{x}(L),\ul{x}'(L))$ of geodesics of $\Pcurve$.
\end{corollary}
\begin{definition} \label{DEF:AM}
An end condition $(\ul{x}_{1},\ul{n}_{1}) \in \R^{3} \rtimes S^2$ is called admissible if $(\ul{x}_{1},\ul{n}_{1}) \in \mathcal{R}$.
\end{definition}
\begin{figure}
\centerline{
\includegraphics[width=0.5\hsize]{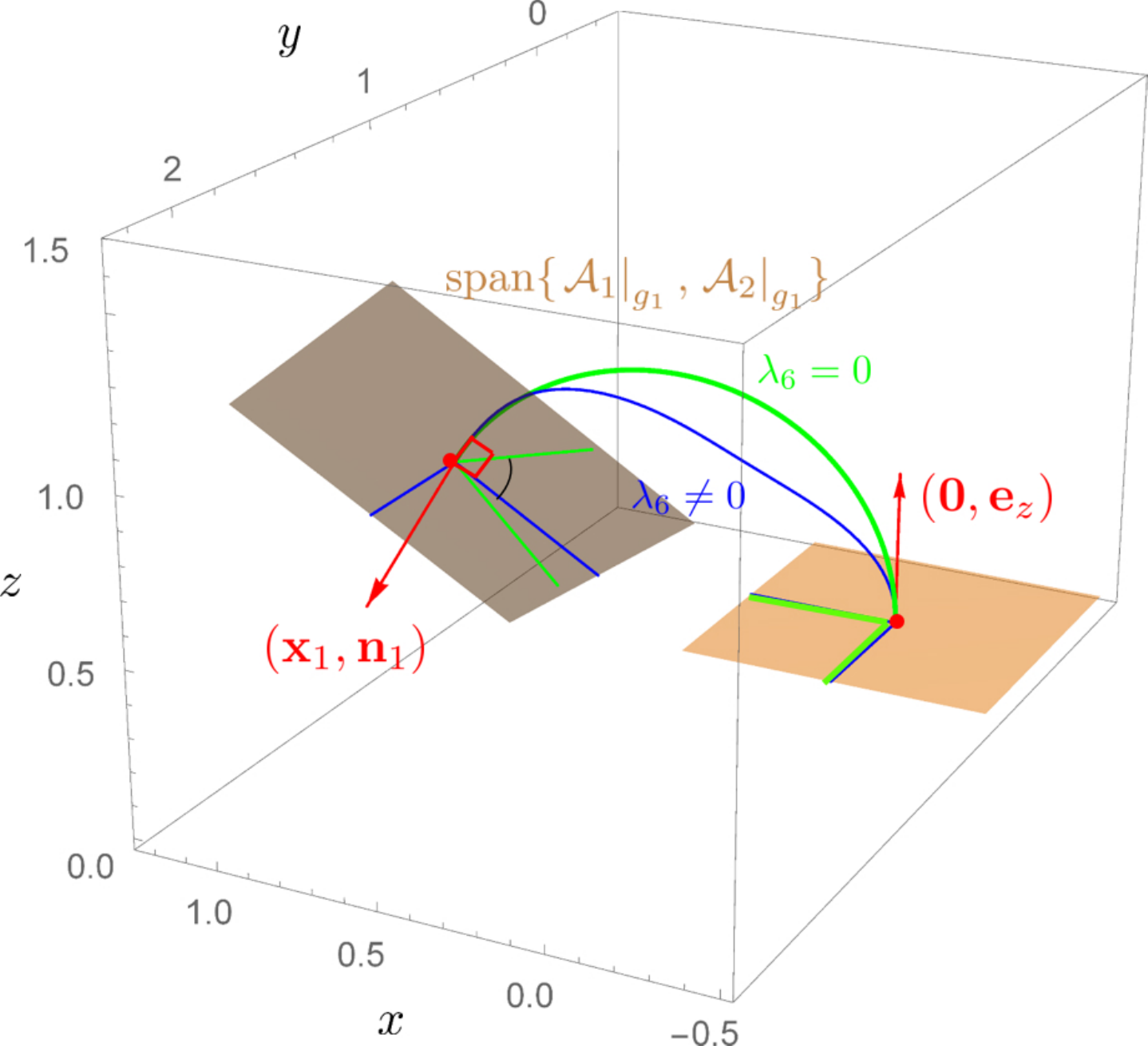}
}
\caption{End point $(\ul{x}_{1},\ul{n}_{1})=((0.53,1.8,1.1), (0.031,0.86,-0.51))$ in $\Pcurve$ gives rise to many possible end conditions
$(\ul{x},R_{\ul{n}_{1}}) \in \SE$ in $\PMEC$. By Theorem~\ref{lemma:One}, the minimizer in (\ref{onthequotient}) is found by setting $\h_{6}(0)=0$.
Here the spatial projection of the minimizing geodesic is depicted in green, and the spatial projection of a geodesic with $\h_6 \neq 0$ in blue.
In order to show the choice of rotation $R_{\ul{n}_{1}} \in \SO$, s.t. $R_{\ul{n}_{1}}\ul{e}_{z}=\ul{n}_{1}$, we depict the spatial frame $\{\left.\mathcal{A}_{1}\right|_{g_1},\left.\mathcal{A}_{2}\right|_{g_1},\left.\mathcal{A}_{3}\right|_{g_1}\}$ at the end points $g_{1}=(\ul{x}_{1},R_{\ul{n}_1})$ of both the blue and green geodesic. For the minimizing geodesic we have $\lambda_6(0)=0$ and $T=d_{\mathbb{R}^{3}\rtimes S^{2}}((\ul{0},\ul{e}_{z}),(\ul{y}_1,\ul{n}_1))=4$, for the other geodesic we have $\h_6(0)=2$ and indeed $T=4.65> 4$.
\label{fig:lemma}}
\end{figure}
\section{$\PMEC$: Sub-Riemannian Problem on $\SE$}\label{sec:PMEC}
In this section we study the problem $\PMEC$. The optimal control theoretical formulation of this sub-Riemannian problem is to find a Lipschitzian curve $\gamma : [0,T]\rightarrow \SE$, with boundary conditions $\gamma(0)=e:=(\bzero,I)$ and $\gamma(T)=(\bx_1,R_1) \in \SE$, such that
\begin{equation}\label{eq:PMECStatControl}
\begin{array}{l}
\int \limits_{0}^{T}\!{\sqrt{\mathcal{G}_{\xi}|_{\gamma (t)}(\dot{\gamma}(t),\dot{\gamma}(t))}\,\dif t} =  \int \limits_0^T\!\sqrt{\xi^2(u^3(t))^2+(u^4(t))^2+(u^5(t))^2}\,\dif t
\to \min \mbox{ (with free $T$)}, \\[8pt]
\textrm{with }
\dgamma (t)  = \sum \limits_{i=3}^5u^i(t)\cA_i|_{\gamma(t)} = \sum \limits_{i=3}^5\langle\omega^i|_{\gamma(t)}, \dgamma(t)\rangle\cA_i|_{\gamma(t)},
\end{array}
\end{equation}
where the control variables $u^i\in \mathbb{L}_1([0,T])$ for $i=3,4,5$.
In particular, we only consider the curves for which the absolute curvature of the spatial projections is in $\mathbb{L}_1([0,T])$. 
The control variables are contravariant components of the velocity vector, so we index them with upper indices.
\begin{remark}
The problem $\PMEC$ given by~(\ref{eq:PMECStatControl}) can be seen as a motion planing problem for a spacecraft, that can move forward/backward (in direction $\cA_3$) and rotate about axis $u^4 \cA_1 + u^5 \cA_2$.
The control $u^3$ determines spacial velocity (the sign of $u^3$ determines forward/backward propagation), while the controls $u^4$ and $u^5$ determine the angular velocity, cf. Fig.~\ref{movingframe}.
\end{remark}
The existence of minimizers for the problem $\PMEC$ is guaranteed by the theorems by Chow-Rashevskii and Filippov on sub-Riemannian structures~\cite{notes}. 
\begin{remark}\label{abnormal}
About smoothness of minimizers of $\PMEC$.
\begin{itemize}
\item We have that for $\PMEC$, there are no abnormal extremals. It follows from the fact that any sub-Riemannian manifold with a $2$-generating distribution does not allow abnormal extremals (see Chapter 20.5.1 in \cite{notes}). This is the case here as we have for $\Delta := \{\cA_3,\cA_4,\cA_5\}$, $\dim(\,[\Delta,\Delta])=\dim(\spann\{\cA_1,\cA_2,\cA_3,\cA_4,\cA_5,\cA_6\})=\dim\,(\SE)=6$.
\item Due to the non-existence of abnormal extremals, the geodesics are always analytic \cite{notes} and so are the extremal controls. A priori, in the application of the Pontryagin Maximum Principle (PMP) \cite{notes,Vinter2010}, one cannot restrict to smooth controls where one relies on $\mathbb{L}_{\infty}$-controls for $\PMEC$ (and $\mathbb{L}_{1}$-controls for $\Pcurve$, similar to the $\SEtwo$-case \cite[ch:5.1, App.B]{Boscain2013a}). In retrospect, however, the minimizers
    are analytic, and one can write the sub-Riemannian distance from $g$ to $e=(\ul{0},I)$ on $(\SE,\Delta,\mathcal{G}_{\xi})$ as
    \begin{equation} \label{distance}
    d(g,e)= \min \limits_{
    \begin{array}{c}
    \gamma \in C^{\infty}([0,T], \SE), T\geq 0, \\
    \dot{\gamma} \in \Delta, \
    \gamma(0)=e, \gamma(T)=g
    \end{array}} \int \limits_{0}^T
    \sqrt{\left. \mathcal{G}_{\xi}\right|_{\gamma(t)}(\dot{\gamma}(t),\dot{\gamma}(t))}\, {\rm d}t.
    \end{equation}
\end{itemize}
\end{remark}
Next, in the application of PMP to the problem $\PMEC$, we rely on the sub-Riemannian arclength parameter $t$, which is well-defined for all SR-geodesics in $(\SE,\Delta, \mathcal{G}_{\ksi})$. Before a cusp occurs, recall Fig.~\ref{fig:GeosesicsWithCusps}, one can also use spatial arclength parameterization $s$, related by $t(s) = \int_0^s \sqrt{\ksi^2+\kappa^2(\sigma)} \; \dif \sigma$. The formal defenition of a cusp time is given bellow.
\begin{definition}\label{def:cusptime}
A cusp time is a time $0<\tcusp<T$ when the third control component $u^3(t)$  (responsible for spatial propagation in $\PMEC$) vanishes, $u^{3}(\tcusp)=0$ and moreover $\dot{u}^3(\tcusp) \neq 0$.
\end{definition}
For illustrations of cusps see Fig.~\ref{fig:GeosesicsWithCusps} and for planar sub-Riemannian geodesics see \cite[Fig.2]{DuitsSE2}, \cite{Boscain}.
\subsection{Application of Pontryagin Maximum Principle}\label{ch:PMP}
A first order necessary optimality condition is given by Pontryagin maximum principle (PMP)~\cite{notes,Pontryagin}. Note that PMP gives a necessary but not a sufficient condition of optimality. Geodesics of $\PMEC$ loses local optimality after the first conjugate point 
and global optimality after the first Maxwell point (see ~\cite{notes}).

The Cauchy-Schwarz inequality implies that the minimization problem for the sub-Riemannian length functional is equivalent to the minimization problem for the action functional (see~\cite{notes})
\begin{equation}\label{eq:actionint}
J = \frac12 \int_0^T (\ksi^2 (u^3)^2 +(u^4)^2 + (u^5)^2) \; \dif t \to \min, \quad \textrm{ with fixed $T>0$.}
\end{equation}
Next we apply PMP to problem $\PMEC$ using the equivalent action functional, i.e. to the following optimal control problem:
\begin{equation}\label{eq:PMECCS}
\dot{\gamma}(t) = \sum_{i = 3}^{5}u^i(t)\mathcal{A}_i|_{\gamma(t)},
\qquad J \to \min, \qquad
\gamma(0) = e, \; \gamma(T) = g_1 \in \SE.
\end{equation}

Denote $q = (x,y,z,\tga,\tbe,\tal) \in \R^3\times (-\pi,\pi] \times[-\frac{\pi }{2}, \frac{\pi }{2}]\times(-\pi, \pi]$ which is identified with an element from $\SE$ via (\ref{eq:Rparam}). The natural momentum coordinates $\{\h_i\}$ for left-invariant sub-Riemannian problems come along with the left-invariant Hamiltonians
$h_{i}:T^{*}(\SE) \to \R$ (see~\cite{notes}),
given by
\begin{equation*}
 h_i(q,\lambda) = \langle \lambda(q) , \AA_i|_{q} \rangle=\lambda_{i}(q), \qquad i = 1,\dots 6,
\end{equation*}
where $\lambda(q) = p_1(q) {\rm d}x|_{q} +p_2(q) {\rm d}y|_{q} +p_3(q) {\rm d}z|_{q} +p_4(q) {\rm d}\tga|_{q} + p_5(q)
 {\rm d}\tbe|_{q} + p_6(q) {\rm d}\tal|_{q}= \sum_{i=1}^6\lambda_{i}(q) \, \omega^{i}|_{q} \in T_q^{\ast}(\SE)$ denotes a momentum covector expressed in respectively the fixed and the moving dual frame.

Now we apply PMP. By Remark~\ref{abnormal} we only need to consider the normal case. The control-dependent Hamiltonian reads
$H_u = u^3 \h_3 + u^4 \h_4 + u^5 \h_5 - \frac12 \left( \ksi^2 (u^3)^2 +(u^4)^2 + (u^5)^2\right)$. Optimization over all controls produces the (maximized) Hamiltonian
\[H = \max_{u\in\R^3}H_u  = \frac{1}{2}\left(\xi^{-2}\h_3^2 + \h_4^2 + \h_5^2\right),\]
and gives expression for the extremal controls
\[u^3 = \frac{\h_3}{\ksi^2}, \quad u^4 = \h_4, \quad u^5 = \h_5.\]
By virtue of the Lie brackets (see Table \ref{tabcomA}), we have the Poisson brackets
\begin{eqnarray*}
&&\{H,\h_1\} = -\h_3 \h_5, \quad \{H,\h_2\} = \h_3 \h_4,\quad\{H,\h_3\} = \h_1 \h_5 - \h_2 \h_4, \\
&& \{H,\h_4\} = \frac{\h_2 \h_3}{\ksi^2} - \h_5 \h_6,\quad \{H,\h_5\} = \h_4 \h_6 - \frac{\h_1 \h_3}{\ksi^2}, \quad \{H,\h_6\} = 0.
\end{eqnarray*}
Thus the Hamiltonian system of PMP reads as follows:
\begin{equation} \label{eq:hamsys}
\begin{array}{ll}
\begin{cases}
\dot{\h}_1 = -\h_3 \h_5,\\
\dot{\h}_2 = \h_3 \h_4,\\
\dot{\h}_3 = \h_1 \h_5 - \h_2 \h_4,\\
\dot{\h}_4 = \frac{\h_2 \h_3}{\ksi^2} - \h_5 \h_6,\\
\dot{\h}_5 = \h_4 \h_6 - \frac{\h_1 \h_3}{\ksi^2},\\
\dot{\h}_6 = 0,
\end{cases} 
& \begin{cases}
\dot{x} = \frac{\h_3}{\ksi^2} \sin\tbe, \\
\dot{y} = -\frac{\h_3}{\ksi^2}\cos\tbe \sin\tga, \\
\dot{z} = \frac{\h_3}{\ksi^2}\cos\tbe \cos\tga,\\
\dot{\tga} = \sec\tbe (\h_4  \cos\tal - \h_5 \sin\tal), \\
\dot{\tbe} = \h_4 \sin\tal +\h_5\cos\tal, \\
\dot{\tal} = -(\h_4 \cos\tal - \h_5 \sin\tal) \tan\tbe,
\end{cases} \\ 
 \text{--- vertical part (for adjoint variables),}
&  \text{--- horizontal part (for state variables).}
\end{array}
\end{equation}
These equations only hold outside the singularities of the angles chart $\{\tal,\tbe,\tga\}$. In particular if $\tbe \in \{-\frac{\pi}{2},\frac{\pi}{2}\}$
we can rely on standard Euler angles $R=R_{\ul{e}_{z},\bar{\gamma}}R_{\ul{e}_{y},\bar{\beta}} R_{\ul{e}_{z},\bar{\alpha}}$ and in the vicinity
of those singularities, the final 3 equations of the
horizontal part need to be replaced by  
\[
\begin{array}{l}
\dot{\bar{\gamma}}= -\frac{\cos \bar{\alpha}}{\sin \bar{\beta}} \h_4 + \frac{\sin \bar{\alpha}}{\sin \bar{\beta}} \h_5, \ \dot{\bar{\beta}}= \h_4 \sin \bar{\alpha} + \h_5 \cos \bar{\alpha}, \ \dot{\bar{\alpha}}= (\h_4 \cos \bar{\alpha} - \h_5 \sin \bar{\alpha})\cot \bar{\beta}.
\end{array}
\]
The sub-Riemannian geodesics are solutions to the Hamiltonian system. Finding a parameterization of the sub-Riemannian geodesics is a nontrivial problem.
In order to guarantee that such a parametrization exists, we first prove Liouville integrability of the system.
Here we follow the same approach as in~\cite{Mashtakov_Sachkov}.

The next remark shows that we can restrict ourselves to the case $\ksi = 1$.
\begin{remark}\label{rem:homothety}
The Hamiltonian system of PMP (\ref{eq:hamsys}) reveals the scaling homothety. 
The case $\ksi \neq 1$ is obtained from the case $\ksi=1$ by
\[
\blambda \mapsto \blambda (\Lambda_{\ksi}^{-1} \oplus I_{3})\textrm{ and } (\ul{x},R) \mapsto (\Lambda_{\ksi} \ul{x},R),
\]
with $\Lambda_{\ksi}=\diag\{\ksi,\ksi,\ksi\}$. So in order to obtain solution $\ul{x}^{*}$ of $\Pcurve$ with boundary conditions
$(\ul{x}(0),\ul{x}'(0))=(\ul{0},\ul{e}_{z})$  and $(\ul{x}(L),\ul{x}'(L))=(\ul{x}_1,\ul{n}_{1})$
for $\ksi>0$, we first solve $\Pcurve$ with boundary conditions
$(\ul{x}(0),\ul{x}'(0))=(\ul{0},\ul{e}_{z})$  and $(\ul{x}(L),\ul{x}'(L))=(\ksi\ul{x}_1,\ul{n}_{1})$
for $\ksi=1$ in dynamics~(\ref{eq:hamsys}). Then the optimal curve $\ul{x}(\cdot)$ needs to be scaled back
$\ul{x}^{*}(s)=\ksi^{-1} \ul{x}(s)$. The homothety boils down to making the problem dimensionless, as the physical dimension of $\ksi^{-1}$
is spatial length.
\end{remark}
\begin{remark}
All prerequisites for the proof of Theorem~\ref{lemma:One} are now given. See Appendix~\ref{app:A}.
\end{remark}
\subsection{Liouville Integrability\label{subsec:Integrability}}
To prove the Liouville integrability of the Hamiltonian system~(\ref{eq:hamsys}), one should construct a complete
system of first integrals, i.e. indicate six first integrals in involution (w.r.t. Poisson brackets)
and functionally independent on an open dense domain in $T^{\ast}(\SE)$~\cite[p. 107]{arnold_kozlov}.

It is well known that the Hamiltonian
$H = \frac{1}{2}\left(\h_3^2 + \h_4^2 + \h_5^2\right)$
is a first integral of the Hamiltonian system.
From the vertical part of~(\ref{eq:hamsys}) we can immediately see one more first integral $\h_6$, which is functionally independent from $H$.
Since $\{H, \h_6\} =0$, we see that the integrals $H$ and $\h_6$ are in involution.
The Hamiltonian system directly reveals the first integral $W = -\h_1 \h_4 - \h_2 \h_5 - \h_3 \h_6$. This integral is a Casimir function, i.e. $\{W, \h_i\} = 0, i = 1, ..., 6$. Casimir functions are functions on the dual space of the Lie algebra commuting in the sense of Poisson brackets with all left-invariant Hamiltonians. They are universal conservation laws on the Lie group. Connected joint level surfaces of all Casimir functions are coadjoint orbits (see~\cite[Prop.~7.7]{Gengoux}). Since these orbits are always even-dimensional (they are symplectic manifolds) the difference between the dimension of the Lie group and the number of Casimir functions is even.
Casimir functions are found by solving $\{C, \h_i\} = 0$.
For polynomial functions $C$, we can write $C$ with undetermined coefficients as a polynomial of degree $1, 2, 3$, and solve the resulting system of equations algebraically. The second Casimir function in $\SE$ is given by $\gc^2 = \h_1^2+\h_2^2+\h_3^2$. For details on the Casimir functions see the work of A.A.~Kirillov~\cite{kirillov} or the book of V.~Jurdjevic~\cite{jurdjevic}.

To construct the complete system of first integrals we consider integrals $H$, $\h_6$ and $W$ and
find 3 more first integrals. Then we show that all 6 integrals are functionally independent on an open dense domain in $T^{\ast}(\SE)$ and are in involution.
\subsubsection{Right-Invariant Hamiltonians}
For any right-invariant vector field $\mathcal{B} \in \Vec(\SE)$, one can define the corresponding Hamiltonian $\g(q,\lambda) = \langle \lambda(q), \mathcal{B}|_{q}\rangle$,  $\lambda(q) \in T^{\ast}_{q}(\SE)$.
Since right translations commute with the left ones,  left-invariant vector fields commute with   right-invariant ones. Thus left-invariant Hamiltonians Poisson-commute with the right-invariant ones. 

The right-invariant vector fields are given by
\begin{eqnarray*}
\mathcal{B}_{1}  = -\partial_x, \quad
\mathcal{B}_{2}  = -\partial_y, \quad
\mathcal{B}_{3}  = -\partial_z, \quad
\mathcal{B}_{4}  = z \partial_y - y \partial_z -\partial_{\tga},\\
\mathcal{B}_{5}  = -z \partial_x + x \partial_z -\sin\tga \sec\tbe \partial_\tal -\cos\tga \partial_\tbe - \sin\tga \tan\tbe \partial_\tga,\\
\mathcal{B}_{6}  = y \partial_x - x \partial_y -\cos\tga \sec\tbe \partial_\tal -\sin\tga \partial_\tbe + \cos\tga \tan\tbe \partial_\tga.
\end{eqnarray*}

Then the right-invariant Hamiltonians are defined by $\g_{i}(q,\lambda)=\langle \lambda(q),\mathcal{B}_{i}|_{q}\rangle$. Since the table of Poisson brackets between $\g_i$ coincides with the commutator of corresponding vector fields (cf.~\!Table~\ref{tabcomA}), we see that only $\g_1, \g_2$ and $\g_3$ are in involution. Their expression via the left-invariant Hamiltonian $\h_i$ reads as
\begin{equation}
\label{rhos}
\begin{array}{ll}
&\g_{1}  = -\h_1 \cos\tal \cos\tbe+\h_2 \cos\tbe \sin\tal-\h_3 \sin\tbe,\\
&\g_{2}  = -\cos\tga (\h_2 \cos\tal + \h_1 \sin\tal) + (\h_3 \cos\tbe + (-\h_1 \cos\tal + \h_2 \sin\tal) \sin\tbe) \sin\tga ,\\
&\g_{3}  = -\h_3 \cos\tbe \cos\tga + \cos\tga (\h_1 \cos\tal - \h_2 \sin\tal) \sin\tbe - (\h_2 \cos\tal +
    \h_1 \sin\tal) \sin\tga. 
\end{array}
\end{equation}
\subsubsection{Independence of Integrals}
In order to study the functional independence of the integrals $H$, $\h_6$, $W$, $\g_1$, $\g_2$, $\g_3$ at a point $(q,\h) \in T^{\ast}( \SE)$, introduce the Jacobian matrix
$$J(q,\h) = (\nabla H \, |\, \nabla \h_6 \,|\, \nabla W \,|\, \nabla \g_1 \,|\, \nabla \g_2 \,|\, \nabla \g_3)^T|_{(q,\h)}.$$
Liouville integrability of the Hamiltonian system follows by the study of the vertical derivatives of the integrals (i.e., the derivatives w.r.t. the variables $\h_i$).  By analyticity, functional independence of the integrals on an open dense domain in $T^{\ast}(\SE)$ follows from linear independence of gradients of the integrals at a single point $(q,\h) \in T^{\ast}(\SE)$. Since we have
\begin{eqnarray*}
& \frac{\partial(\g_1,\g_2,\g_3,W,H,\h_6)}{\partial (\h_1,\h_2,\h_3,\h_4,\h_5,\h_6)}(q,\h) = 
& -\h_2 \h_4 + \h_1 \h_5 \not\equiv 0,
\end{eqnarray*}
the first integrals $\g_1$, $\g_2$, $\g_3$, $I$, $H$, $\h_6$ are functionally independent.
Here we use that $-\h_{2}\h_{4} + \h_{1}\h_{5}=\dot{\h}_{3}= \dot{u}^{3} = \frac{d^2 s}{dt^2} \not\equiv 0$.
So we proved the following theorem.
\begin{theorem}
\label{th:integrability}
The Hamiltonian system~(\ref{eq:hamsys}) is Liouville integrable. Functionally independent first integrals are $\rho_1$, $\rho_2$, $\rho_3$, cf.~\!Eq.\!~(\ref{rhos}), $W=-\!\lambda_{1}\lambda_4\!-\!\lambda_{2}\lambda_{5}\!-\!\lambda_3\lambda_6$,
$H=\frac{1}{2}(\lambda_3^2\!+\!\lambda_{4}^2\! +\!\lambda_{5}^2)$ and $\lambda_6$. \end{theorem}
Explicit integration of~(\ref{eq:hamsys}) in sub-Riemannian arclength parametrization $t$ is a difficult problem. Further in Section~\ref{sec:Pmecs} we show that using spatial arclength  parametrization $s$ leads to relatively simple expressions for sub-Riemannian geodesics whose
spatial projections do not exhibit cusps.
\subsection{The $-$ Cartan Connection $\overline{\nabla}$}\label{ch:Cartan}
In the sub-Riemannian manifold $(\SE, \Delta, \mathcal{G}_{\ksi})$, the directions $\cA_1$, $\cA_2$ and $\cA_6$ are prohibited in the tangent bundle. To get a better grasp on what this means on the manifold level, we consider principal fibre bundles. We use the minus `$-$' Cartan connection~\cite{CS} to connect the Hamiltonian PMP approach to the Lagrangian reduction approach by Bryant and Griffiths \cite{Bryant}. It provides more intuition and is an important tool towards explicit formulas. As is seen by the following theorem, these curves actually describe parallel transport of the momentum covectors w.r.t. Cartan connections. In the original Cartan and Schouten article~\cite{CS}, three Cartan connections are presented, the $+$, the $-$, and $0$ connection. Here we shall rely on the minus $-$ Cartan connection for which the left-invariant vector fields are autoparallel~\cite{Penne}, since our geometrical control problem is expressed in left-invariant vector fields. In order to keep track of correct tensorial computations we deal with the general case $\ksi>0$ in Theorem~\ref{th:2}. However, by Remark~\ref{rem:homothety} only the case $\xi=1$ is considered in the remainder of the article.
\begin{definition} \label{def:CC}
We define connection $\overline{\nabla}$ on the (horizontal) tangent bundle of $(\SE,\Delta,\mathcal{G}_{\ksi})$ by
\begin{align} \label{CC}
\overline{\nabla}_{\dot{\gamma}}\cA &:= \sum_{k=3}^{5}\left((\dot{a}^k) - \sum \limits_{i,j=3}^{5}c^{k}_{i,j} \, (\dot{\gamma}^i) a^{j} \right)\mathcal{A}_{k},
\end{align}
with ${\dot{\gamma}}=\sum \limits_{i=3}^{5}\dot{\gamma}^i \cA_i|_\gamma$, $\cA=\sum \limits_{k=3}^{5}a^k\cA_k$ and Lie algebra structure constants $c^{k}_{i,j}$ given in Table~\ref{tabcomA}.
\end{definition}
It is a partial $-$ Cartan connection that originates from a specific choice of principle fiber bundle. For details see Appendix~\ref{app:Cartan} below.
Another ingredient in the theorem below is the standard exponential map from Lie algebra to Lie group given by $T_{(\ul{0},I)}(\SE) \ni A \to e^A \in \SE$.
\begin{theorem}\label{th:2}
Horizontal exponential curves in $(\SE,\Delta,\mathcal{G}_{\ksi})$,
are given by $t\mapsto g_0\; e^{t\sum\limits_{i=3}^{5} c^i A_i}$,
with $\ksi^2(c^{3})^2+ (c^{4})^2 + (c^{5})^2=1$, $g_0 \in \SE$,
and they are precisely the auto-parallel curves, i.e.
\[
\overline{\nabla}_{\dot{\gamma}}\dot{\gamma}=0.
\]
Along sub-Riemannian geodesics in $(\SE,\Delta,\mathcal{G}_{\ksi})$ one has covariantly constant momentum, i.e.
\begin{equation} \label{imp1}
  \overline{\nabla}_{\dot{\gamma}}^*\lambda:= \sum_{i=1}^{6}
\left(\dot{\h}_{i} + \sum \limits_{j=3}^{5} \sum \limits_{k=1}^{6} c^{k}_{i,j} \h_{k} \,\dot{\gamma}^{j}\right) \omega^{i}=0.
\end{equation}
When setting contravariant components $\h^{i}= \ksi^{i} \h_{i}$, $\ksi^{3}=\ksi^{-2}$, $\ksi^{4}= \ksi^{5}=1$, and $P_{\Delta^{\ast}}\h=\sum \limits_{i=3}^5\h_{i} \omega^{i}$
then the Hamiltonian system of PMP~(\ref{eq:hamsys}) can be written as:
\[
\begin{array}{c c c}
\forall_{i\in\{1, \ldots, 6\}}:
\dot{\h}_{i} + \sum \limits_{j=3}^{5} \sum \limits_{k=1}^{6} c^{k}_{i,j} \h_{k} \, \h^j = 0 & \textrm{ and } & \forall_{i\in\{3,4, 5\}}: \dot{\gamma}^{i}=\h^{i} \ \ 
 \\
\mathrm{(vertical~~part)} & & \mathrm{(horizontal~~part),}
\end{array}
\]
which is equivalent to $\overline{\nabla}_{\dot{\gamma}}^*\h= 0$  and $\mathcal{G}_{\ksi}^{-1}P_{\Delta^{\ast}} \h = \dot{\gamma}$.
\end{theorem}
\begin{proof}
Define $\dot{\gamma}^{i}:= \langle \left.\omega^{i}\right|_{\gamma}, \dot{\gamma} \rangle$.
Note that $\dot{\gamma}^i = u^i$. Here we write $\dot{\gamma}^i$ (instead of $u^i$) to stress the curve dependence of the control components.
Then following the same approach as done in \cite{DuitsAMS2}, \cite[App.C]{DuitsSE2} (for
the case $\SEtwo$), the Cartan connection on the tangent bundle is given by Eq.~(\ref{CC}). From Eq.~(\ref{CC}) we see that
the auto-parallel curves are horizontal exponential curves:
\[ \bar{\nabla}_{\dot{\gamma}} \dot{\gamma}=0 \desda \forall_{i \in \{3,4,5\}} \ddot{\gamma}^{i}=0  \desda \forall_{i \in \{3,4,5\}} \dot{\gamma}^{i}=u^{i}=c^{i}  \textrm{ for some constants } c^{i} \desda \gamma(t)= \gamma(0)\, e^{t\sum\limits_{i=3}^{5} c^i A_i},\]
and to ensure $t$ to be the sub-Riemannian arclength parameter we must have $\ksi^2(c^{3})^2+(c^{4})^2 + (c^{5})^2=1$.
Now the partial (`right' or $-$) Cartan connection $\overline{\nabla}$ on the tangent bundle  naturally imposes the partial Cartan connection $\overline{\nabla}^*$ on the cotangent bundle, as follows:
\begin{equation} \label{covcot}
\overline{\nabla}_{\dot{\gamma}}^*\sum_{i=1}^{6}
\h_{i}\omega^i|_{\gamma} =
\sum \limits_{i=1}^{6} \left(\dot{\h}_{i} +
\sum \limits_{j=3}^{5} \sum\limits_{k=1}^{6} c^k_{i,j}\h_k \dot{\gamma}^j\right) \left.\omega^{i}\right|_{\gamma}\ ,
\end{equation}
which follows from Eq.~(\ref{CC}) and
$
\dif\langle\omega^k|_{\gamma},\cA_j|_{\gamma}\rangle =\langle\overline{\nabla}^{*}_{\dot{\gamma}}\omega^k|_{\gamma},\cA_j|_{\gamma}\rangle + \langle\omega^k|_{\gamma},\overline{\nabla}_{\dot{\gamma}}\cA_j|_{\gamma}\rangle =0
$, and $c^{k}_{i,j}=-c_{j,i}^k$.
Now, by the horizontal part of PMP, we have
$
\dot{\gamma}^{i}=\h^{i}\textrm{ for all }i \in \{3,4,5\},
$
so that the result follows by substituting this equality
into Eq.~(\ref{covcot}).
\end{proof}

Next we switch to spatial arclength parameter $s$, as this is convenient. Recall $\frac{ d}{d s}$ is denoted by a prime.
Also recall that $s$-parametrization is well defined until the spatial projection of a sub-Riemannian geodesic exhibits a cusp (recall Definition~\ref{def:cusptime}).

Denote by $\smax$ the minimum positive value of the parameter $s$ where such a cusp appears. Explicit expression for $\smax$ in terms of the initial momentum $\h(0)$ will follow (see Eq.~\!(\ref{smaxfrml})).
Next to find the sub-Riemannian geodesics we integrate the equation in Theorem~\ref{th:2} via a suitable matrix representation
visible in the Cartan-matrix of the Cartan connection.
Such a group representation $m: \SE \to \Aut(\R^{6})$ is given by
\begin{equation} \label{M}
m(\ul{x},R):=
\left(
\begin{array}{cc}
R & \sigma_{\ul{x}}R \\
0 & R
\end{array}
\right),
\end{equation}
where $\sigma_{\ul{x}}=\sum \limits_{i=1}^{3} x^{i} A_{3+i} \in \textrm{so}(3)$, so that $\sigma_\ul{x}\ul{y} = \ul{x}\times\ul{y}$, with
$\ul{x}=\sum \limits_{i=1}^{3} x^{i} \ul{e}_{i}$ and $A_{3+i} \in \textrm{so}(3)$ given by
$$A_4 = \left(\begin{array}{ccc}
0 & 0 & 0 \\
0 & 0 & -1 \\
0 & 1 & 0
\end{array}\right), \quad
A_5 = \left(\begin{array}{ccc}
0 & 0 & 1 \\
0 & 0 & 0 \\
-1 & 0 & 0
\end{array}\right),\quad
A_6 = \left(\begin{array}{ccc}
0 & -1 & 0 \\
1 & 0 & 0 \\
0 & 0 & 0
\end{array}\right).$$
Here,
we have $\sigma_{R\ul{x}}=R \sigma_{\ul{x}} R^{-1}$ and thereby $m(g_{1}g_{2})=m(g_{1}) m(g_{2})$ for all $g_{1},g_{2} \in \SE$. Then
\[
\begin{array}{lll}
{\rm d}\bh &= \bh
m(\gamma^{-1}) {\rm d} m(\gamma)= \bh \;
\left(
\begin{array}{cc}
R^{-1}{\rm d}R & \sigma_{R^{-1} {\rm d}\ul{x}} \\
0 & R^{-1}{\rm d}R
\end{array}
\right) & = \bh \;
\left(
\begin{array}{cc}
\sigma_{(\omega^{4},\ldots,\omega^{6})^T} & \sigma_{(\omega^{1},\ldots,\omega^{3})^T} \\
0 & \sigma_{(\omega^{4},\ldots,\omega^{6})^T}
\end{array}
\right),
\end{array}
\]
with short notation $\omega^{j}=\left.\omega^{j}\right|_{\gamma}$, $\bh=\left.\bh\right|_{\gamma}$, ${\rm d}\bh=\left.{\rm d}\bh\right|_{\gamma}$,
and where we represent the covector $\h=\sum_{i=1}^{6}\h_{i}\left.\omega^{i}\right|_{\gamma}$ by a row-vector $\bh=(\h_{1}, \ldots,\h_{6})$.
So we see that~(\ref{M}) naturally appears in~(\ref{imp1}):
$
\overline{\nabla}_{\dot{\gamma}}^*\h = 0 \desda \frac{{\rm d} \bh}{\dif t} - \bh m(\gamma^{-1}) \frac{{\rm d} m(\gamma)}{{\rm d} t} = 0.
$
\begin{theorem}\label{th:3}
Let $m: \SE \to \Aut(\R^{6})$ denote the matrix group representation (\ref{M}) 
s.t.
\begin{equation} \label{imp2}
\left.{\rm d}\bh\right|_{\gamma} =  \left.\bh \right|_{\gamma} \; m(\gamma^{-1}) {\rm d} m(\gamma).
\end{equation}
Then along the sub-Riemannian geodesics in $(\SE,\Delta,\mathcal{G}_{1})$ the following relation holds:
\[
\bh(s) m(\gamma(s))^{-1}= \bh(0) \, m(\gamma(0))^{-1} = \bh(0).
\]
\end{theorem}
\begin{proof}
Note that $\overline{\nabla}^*_{\gamma'(s)} \left.\h\right|_{\gamma(s)} =0$ iff
$
\frac{{\rm d}}{{\rm d} s}
\left.\bh(s)\right|_{\gamma(s)} -
\left.\bh(s) \right|_{\gamma(s)} \; m((\gamma(s))^{-1}) \frac{{\rm d}}{{\rm d} s}m(\gamma(s))=0
$
for all $0\leq s\leq \smax(\h(0))$. The rest follows by
\[
\frac{{\rm d}}{{\rm d}s}(\bh(s) (g(s))^{-1})= -\bh(s) (g(s))^{-1} g'(s) (g(s))^{-1} + \bh'(s) (g(s))^{-1}=0\ ,
\]
with $g(s)= m(\gamma(s))$. Multiplication with $g(s)$ from the right yields the result.
\end{proof}
For further details see App.~\ref{app:Cartan}. These details are not necessary for the remainder of the article, where we only rely on (\ref{CC}), (\ref{imp1}), (\ref{M}) and (\ref{imp2}).

Let us recall the first integrals of the Hamiltonian system and the coadjoint orbits, that we use next for the derivation of explicit formulas for the geodesics.
\begin{lemma}\label{lemme:coadjorb}
Co-adjoint orbits of $\h(0)$ are given by (see~\cite[p.~474]{Marsden_Ratiu} and Section~\ref{subsec:Integrability})
$$\{\h \in T^{\ast}(\SE)\; |\; C_1(\h) = C_1(\h(0)) = \gc^2, C_2(\h) = C_2(\h(0)) = W\},$$
where $C_i$ are the Casimir functions
\begin{equation}\label{eq:Wgc}
C_1(\h) = \h_1^2 + \h_2^2 +\h_3^2 = \gc^2, \quad C_2(\h) = -\h_1 \h_4 - \h_2 \h_5 - \h_3 \h_6 = W.
\end{equation}
\end{lemma}
\begin{corollary} \label{cor:new}
On each co-adjoint orbit we can choose the nice representative $\bh(0) = (\gc,0,0,-\frac{W}{\gc},0,0)$, solve the Exponential map for this representative and obtain the general solution by left-invariance.
More precisely, by Theorem~\ref{th:3}, we first find the geodesic $\tilde{\gamma}$ with
\begin{equation}\label{eq:representative}
\bh(s) = \bh(0)\, m(\gamma(s)) = \bh(0)\, m(\tilde{\gamma}(0))^{-1}\, m(\tilde{\gamma}(s)) = (\gothic{c},0,0,-\frac{W}{\gothic{c}},0,0)\, m(\tilde{\gamma}(s))
\end{equation}
and then we obtain $\gamma$ via
 $\gamma(s) = \tilde{\gamma}^{-1}(0) \tilde{\gamma}(s)$.
\end{corollary}

\section{Sub-Riemannian Geodesics in $\R^{3}\rtimes S^{2}$ with Cuspless Projections}\label{sec:Pmecs}
%
%
In this section, we derive sub-Riemannian geodesics with cuspless spatial projection in the quotient $\R^3 \rtimes S^2$, and we study geometrical properties of the geodesics, such as planarity conditions and bounds on the torsion. By Remark~\ref{rem:homothety} we set $\ksi=1$.

Recall that from Theorem~\ref{lemma:One}, application of PMP to $\Pmec$ follows from PMP for the problem $\PMEC$ putting initial momentum $\h_6 = 0$. This yields the following ODE for the horizontal part:
$$\dgamma = \h_3\cA_3|_{\gamma} + \h_4\cA_4|_{\gamma} + \h_5\cA_5|_{\gamma},$$
and for the vertical part, we obtain the ODE
\begin{equation}\label{PreL}
\frac{\dif}{\dif t}(\h_1,\h_2,\h_3,\h_4,\h_5) = (-\h_3\h_5, \h_3\h_4, \h_1\h_5 - \h_2\h_4, \h_3\h_2, -\h_3\h_1).
\end{equation}
Here $\h(t) = \sum^5_{i=1}\h_i(t)\omega^i|_{\gamma(t)}$ is the momentum expressed in the moving dual frame of reference.
This system of ODE's takes a simple form in $s$ parametrization, where we have $\h_3 = \frac{\dif s}{\dif t}>0$. It is given by
\begin{equation}\label{eq:horpartPmecs}
\frac{\dif}{\dif s}(\h_1,\h_2,\h_3,\h_4,\h_5) = \left(-\h_5, \h_4, \frac{\h_1\h_5 - \h_2\h_4}{\h_3}, \h_2, -\h_1 \right).
\end{equation}
This system can be easily integrated as follows:
\begin{equation} \label{ls}
\begin{array}{c}
\h_{1}(s)=\h_{1}(0)\cosh s-\h_{5}(0)\sinh s, \qquad
\h_{5}(s)=\h_{5}(0)\cosh s-\h_{1}(0)\sinh s, \\
\h_{2}(s)=\h_{2}(0)\cosh s+ \h_{4}(0) \sinh s, \qquad
\h_{4}(s)=  \h_{4}(0)\cosh s+ \h_{2}(0) \sinh s, \\
\h_{3}(s)=\sqrt{1-|\h_{4}(s)|^{2}-|\h_{5}(s)|^{2}},
\end{array}
\end{equation}
where the last expression follows from the Hamiltonian and the restriction $\h_3>0$. Thus we obtain two hyperbolic phase portraits (see Figure~\ref{phaseplot}).

Recall that spatial arc-length parametrization is well-defined only for the geodesics $\gamma$ whose spatial projections $\pi(\gamma)=\ul{x}(\cdot)$ do not have external cusps. Further we evaluate $\displaystyle \smax = \min\{s>0|u^3(s) = 0\}$, i.e. the minimal positive value of $s$, such that $\pi(\gamma(s)) = \bx(s)$ is a cusp point.

In the remainder of this article we use the short notation
\begin{equation}\label{eq:lamshort}
\ulh{1}:=(-\h_{1},-\h_{2}) \textrm{ and } \ulh{2}:=(\h_{5},-\h_{4}),
\end{equation}
which is not to be confused with the pair $(\blambda^{(1)},\blambda^{(2)}) \in \R^6$ given by (\ref{eq:boldlam}).

From the Hamiltonian $H = \frac{\h_3^2 + \h_4^2+\h_5^2}{2} = \frac12$, we conclude that $\|\ulh{2}\| \leq 1$. For $\|\ulh{1}\|$ we have no restrictions. This follows from $\h_1^2+\h_2^2+\h_3^2 = \gc^2$, where $\gc \in \R$.
\subsection{Computation of the First Cusp Time\label{ch:cusp}}
An arbitrary geodesic in $\Pmec$ cannot be extended infinitely in $s$-parameterization, since in the common case its spatial projection presents a cusp, where spatial arclength $s$-parametrization breaks down. In fact, for any given initial values of $\ulh{1}(0)$ and $\ulh{2}(0)$, the maximum length $\smax$ of such a geodesic,  where we have $\kappa(s)\rightarrow\infty$ as $s\uparrow s_{max}$, is given by the following theorem.
\begin{theorem}\label{sMax}
The spatial projection of a geodesic of $\Pmec$ corresponding to initial momenta $\ulh{1}(0)$, $\ulh{2}(0)$ such that $\|\ulh{2}(0)\| \leq 1$, presents a cusp first time $t^{1}_{cusp}=t(\smax)$ when $s=\smax$,
    \begin{equation}\label{smaxfrml}
        \smax = \frac{1}{2} \log{\frac{1 + \gc^2 + 2 \sqrt{\gc^2 - W^2}}{\|\ulh{2}(0) + \ulh{1}(0)\|^2}},
    \end{equation}
    with $W$ and $\gc$ given by Lemma~\!\ref{lemme:coadjorb}. For given $\ulh{1}(0)$ and $\ulh{2}(0)$, the spatial projection of the corresponding geodesic does not have a cusp for all end times iff $\|\ulh{2}(0) + \ulh{1}(0)\| = 0$.
\end{theorem}
\begin{proof}
By definition we have $\smax = \min\{s>0|u^3=0\}$. From PMP we have $u^3(s) = \h_3(s)$ for geodesics. Thus $u^3(s)=0 \desda \h_3(s)=0$. From the Hamiltonian, we have $\h_3(s) = \sqrt{1- (\h_4^2(s)+\h_5^2(s))}$, yielding $\h_3(s)=0 \desda \h_4^2(s)+\h_5^2(s)=1$. Expressions for $\h_4(s)$ and $\h_5(s)$ are given by~(\ref{ls}), so~(\ref{smaxfrml}) provides the minimal positive root of $ \h_4^2(s)+\h_5^2(s)=1$.
\end{proof}
\begin{cor}\label{cor:maxsmax}
For fixed $\|\ulh{2}(0)\|$ and $\|\ulh{1}(0)\|$, $\smax$ is maximal at those $\ulh{2}(0)$ and $\ulh{1}(0)$ such that $W=0$ and $\ulh{2}(0)\cdot\ulh{1}(0)\leq 0$.
\end{cor}
\begin{proof}
Let an angle $-\pi\leq\theta \leq \pi$ be chosen such that $\ulh{2}(0)\cdot\ulh{1}(0) = \|\ulh{2}(0)\|\|\ulh{1}(0)\|\cos{\theta}$ and $W = \|\ulh{2}(0)\|\|\ulh{1}(0)\|\sin{\theta}$.
Then along with the condition $\smax>0$, Eq.~\!\eqref{smaxfrml} yields
$\frac{\dif \smax}{\dif\theta}=0\Rightarrow\sin{\theta}=0,$
yielding three critical points, $\pm\pi$ and $0$. Comparing $\smax$ at these critical points, we get $\pm\pi$ as candidates for $\smax$ to be maximum. Checking the 2nd derivative at $\pm\pi$ we obtain $\smax$ to be maximum at $\theta=\pm\pi$. Thus $W=0$ and $\ulh{2}(0)\cdot\ulh{1}(0) = -\|\ulh{2}(0)\|\|\ulh{1}(0)\| \leq 0$.
\end{proof}
\subsection{The Exponential Map} \label{sec:ExplictFormulas} 
In order to integrate the geodesic equations, we apply Theorem~\ref{th:3}.
This provides the explicit formulas for the sub-Riemannian geodesics, which we present in the next theorem.
The sub-Riemannian geodesics are parameterized by elliptic integrals of the 1-st, 2-nd and 3-rd kind
\begin{eqnarray*}
&F(\varphi,m) = \int \limits_{0}^{\varphi} (1 - m \sin^2 \th)^{-\frac12} \, \dif \th, \qquad 
E(\varphi,m) = \int \limits_{0}^{\varphi} (1 - m \sin^2 \th)^{\frac12} \, \dif \th, \\ 
&\Pi(n,\varphi,m) = \int \limits_{0}^{\varphi}(1 - m \sin^2 \th)^{-\frac12}(1-n \sin^2 \th)^{-1} \, \dif \th.
\end{eqnarray*}
In our formulas below we use constants $W = - \h_2 \h_5 - \h_1 \h_4$, $\gc = \sqrt{\|\ulh{1}\|^2 - \|\ulh{2}\|^2+1}$.
\begin{theorem}\label{statcurv}
    Let the momentum covector be given by Eq.~\!(\ref{ls}), where $\sum_{i=1}^3 \h_i^2(0) \neq 0$. Then the spatial part of the cuspless sub-Riemannian geodesics in $\Pmec$ is given by
    \begin{equation}\label{transform}
        \bx(s) = \tR(0)^T(\tilde{\bx}(s) - \tilde{\bx}(0)),
    \end{equation}
    where $\tR(0)$ and $\tilde{\bx}(s) := (\tx(s), \ty(s), \tz(s))$ are given in terms of $\underline{\h}^{(1)}(0)$ and $\underline{\h}^{(2)}(0)$ depending on several cases.
    For all cases with $\ulh{1}(0)\neq\ulh{2}(0)$ we have
    \begin{equation}\label{tildecoordx}
        \tx(s) = \frac{1}{\gc} \int \limits_{0}^{s} \! \h_{3}(\tau) \, \dif \tau  = - \frac{i \sqrt{1-d}\sqrt{1+\gc^2}}{\gc \sqrt{2}}\left(E\left(\left(s+\frac{\varphi}{2}\right) i, M\right)-E\left(\frac{\varphi}{2} i, M\right) \right),
    \end{equation}
    where
    $M := \frac{2 d}{d-1}$, $d := \frac{\|\ulh{2}(0)+\ulh{1}(0)\|\|\ulh{2}(0)-\ulh{1}(0)\|}{1+\gc^2}\leq 1$, and $\varphi := \log \frac{\|\ulh{2}(0)+\ulh{1}(0)\|}{\|\ulh{2}(0)-\ulh{1}(0)\|}$.

    For the case $\ulh{1}(0) = \bzero$, we have
    \begin{equation}\label{arnought1}
        \tR(0) = \left(
            \begin{array}{ccc}
                0   & 0 & 1 \\
                0   & 1 & 0 \\
                -1  & 0 & 0
            \end{array}
        \right) \in \SO,
        \qquad
        \left(
            \begin{array}{c}
                \ty(s)  \\
                \tz(s)
            \end{array}
        \right)
        = \frac{-1}{\gc }
        \left(
            \begin{array}{c}
                 \h_{4}(s)  \\
                 \h_{5}(s)
            \end{array}
        \right).
    \end{equation}
    For the case $\ulh{1}(0) \neq \bzero$, we have
    \begin{equation}\label{arnought2}
        \tR(0) =
        \frac{1}{\gc}
        \left(
            \begin{array}{ccc}
                \h_1(0)                                    &      \h_2(0)                                 &    \h_{3}(0)   \\
                \gc  \frac{-\h_{2}(0)}{\|\ulh{1}(0)\|}     &  \gc  \frac{\h_{1}(0)}{\|\ulh{1}(0)\|}       &        0       \\
                \frac{-\h_{1}(0)\h_{3}(0)}{\|\ulh{1}(0)\|} &  \frac{-\h_{2}(0)\h_{3}(0)}{\|\ulh{1}(0)\|}  & \|\ulh{1}(0)\|
            \end{array}
        \right) \in \SO.
    \end{equation}
    For the case $W = 0$ along with $\ulh{1}(0) \neq \bzero$, we have
    \begin{equation} \label{help}
        \left(
            \begin{array}{c}
                \ty(s)  \\
                \tz(s)
            \end{array}
        \right) = \frac{\ulh{2}(s)\cdot\ulh{1}(0)}{\gc  \|\ulh{1}(0)\|} \left(
            \begin{array}{c}
                0   \\
                1
            \end{array}
        \right)
        .
    \end{equation}
    For $W \neq 0$ along with $\ulh{1}(0) \neq \bzero$ 
    we have
    \begin{equation}\label{intglA}
        \left(
            \begin{array}{c}
                \ty(s)  \\
                \tz(s)
            \end{array}
        \right) =
        \frac{\sqrt{\|\underline{\h}^{(2)}(s)\|^2 - W^2\gc^{-2}}}{\gc^2  \|\underline{\h}^{(1)}(0)\| \sqrt{\|\underline{\h}^{(2)}(0)\|^2 - W^2\gc^{-2}}}
        \left(
                        \begin{array}{cc}
                            \cos \tpsi(s)   & -\sin \tpsi(s)    \\
                            \sin \tpsi(s)   &  \cos \tpsi(s)
                        \end{array}
        \right)
        \left(
            \begin{array}{c}
                W \h_{3}(0)  \\
                \gc (\ulh{2}(0)\cdot \ulh{1}(0))
            \end{array}
        \right),
    \end{equation}
where 
\begin{eqnarray}
\tpsi(s) = \int \limits_{0}^{s} \! \frac{W \gc^{-1} \h_{3}(\tau)}{\|\ulh{2}(\tau)\|^2 - W^{2}\gc^{-2}} \, \dif \tau
= -\frac{W}{\gc} \frac{\sqrt{2}}{\sqrt{1+\gc^2}\sqrt{1-d}} \frac{1}{i}\big(F(i(s+\frac{\varphi}{2}),M) - F(\frac{i \varphi}{2},M)\nonumber \\
  - (1 - \frac{1}{D})(\Pi\left(\frac{M}{D},i(s +\frac{\varphi}{2}),M\right) - \Pi \left(\frac{M}{D},\frac{i \varphi}{2},M\right)) \big), \label{eq:phi}
\end{eqnarray}
with $D = 2(\frac{W^2}{\gc^2} - 1)(1+\gc^2)^{-1} (1-d)^{-1} + 1$ and $|\tpsi(s)| < \pi$, $\sign(\tpsi(s)) = \sign(W)$.
\end{theorem}
\begin{proof}
 We use Theorem~{\ref{th:3}} and apply Corollary~\ref{cor:new}. From which we have $\gamma(s) = \tilde{\gamma}(0)^{-1} \tilde{\gamma}(s)$, where $m(\tilde{\gamma}(s))$ relates to $\bh(s)$ via Eq.~(\ref{eq:representative}). This provides~(\ref{transform}).
    For the most general case, assuming non-vanishing denominators throughout, we see that when choosing \eqref{arnought2} and
    $\tilde{\bx}(0) := \frac{1}{\gc^2 \|\ulh{1}(0)\|} \left(0, W \h_{3}(0),\gc (\ulh{1}(0)\cdot\ulh{2}(0))\right)^T$, equation \eqref{eq:representative} is satisfied in the initial moment $s=0$. Then solving \eqref{eq:representative} for $\tx$, $\ty$ and $\tz$ we obtain $ \tx(s) = \frac{1}{\gc} \int \limits_{0}^{s} \! \h_{3}(\tau) \, \dif \tau$ for $\tx(s)$ and for $(\ty(s), \tz(s))$ we obtain the following system:
  \begin{equation}\label{A}
  \begin{array}{l}
    \left(
            \begin{array}{c}
                \ty'(s)  \\
                \tz'(s)
            \end{array}
        \right) = A(s)
        \left(
            \begin{array}{c}
                \ty(s)  \\
                \tz(s)
            \end{array}
        \right), \\
    \textrm{ with } A(s) = \frac{1}{\|\ulh{2}(0)\|^2 - \frac{W^{2}}{\gc^2 }}
         \left(
            \begin{array}{cc}
               \ulh{2}(s) \cdot \ulh{1}(s)    & - \frac{W}{\gc} \h_{3}(s)  \\
               \frac{W}{\gc} \h_{3}(s) & \ulh{2}(s)\cdot \ulh{1}(s)
            \end{array}
        \right).
        \end{array}
  \end{equation}
    Note that $A(s)$ and $A(s')$ commute, and Wilcox formula \cite{Wilcox} yields the result.

    Clearly, the formulas are not valid when denominators vanish. Hence we do the whole procedure keeping in mind the special cases (\ref{arnought1}), (\ref{help}) right from the start and get the required results.  

    Regarding (\ref{intglA}) we note that matrix $e^{\int_{0}^{s}A(\tau){\rm d}\tau}$ can be computed explicitly. One has
$$
                e^{\int \limits_{0}^{s} \! A(\tau) \, \dif \tau} =
                                \sqrt{\frac{\|\underline{\h}^{(2)}(s)\|^2 - W^2\gc^{-2}}{\|\underline{\h}^{(2)}(0)\|^2 - W^2\gc^{-2}}}
                    \left(
                        \begin{array}{cc}
                            \cos \tpsi(s)   & -\sin \tpsi(s)    \\
                            \sin \tpsi(s)   &  \cos \tpsi(s)
                        \end{array}
                    \right),
$$
with $\tpsi(s) = \int \limits_{0}^{s} \! \frac{W \gc^{-1} \h_{3}(\tau)}{\|\ulh{2}(s')\|^2 - W^{2}\gc^{-2}} \, \dif \tau$.
From the first integrals one can deduce that $\forall s\in (0,\smax): \|\ulh{2}(s)\|^2 - W^{2}\gc^{-2} >0$. Note $\|\ulh{2}(s)\|^2 = W^{2}\gc^{-2} \desda s= \smax \wedge \ulh{1}(\smax)\cdot\ulh{2}(\smax)=0$. By direct computation~(\ref{eq:phi}) follows. Moreover, by Lemma~4.13 in~\cite{GhoshDuitsArxiv} we have $|\tpsi(s)|<\pi$ for all $s \leq \smax$ and since $\frac{\gc^{-1} \h_3(s)}{\|\ulh{2}(s)\|^2 - W^{2}\gc^{-2}} \geq 0$ we have $\sign(\tpsi(s)) = \sign(W)$.

The remaining part is to prove that $ \tx(s) = \frac{1}{\gc} \int \limits_{0}^{s} \! \h_{3}(\tau) \, \dif \tau$ can be integrated in terms of elliptic integrals, as presented in~(\ref{tildecoordx}). This is done by the following computation:
\begin{eqnarray*}
\tx(s) = \int_0^s \frac{\h_3(\tau)}{\gc} \dif \tau =  \frac{1}{\gc}\int_0^s\sqrt{1 - \|\ulh{2}(\tau)\|^2}\dif\tau =
\frac{\sqrt{1+\gc^2}}{\gc \sqrt{2}} \int_0^s \sqrt{1 - c_1 \cosh(2 \tau) - c_2 \sinh(2\tau)} \dif \tau,
\end{eqnarray*}
where $c_1 := \frac{1}{1+\gc^2}(\|\ulh{2}(0)\|^2+\|\ulh{1}(0)\|^2)$ and $c_2 := \frac{2}{1+\gc^2}\ulh{2}(0)\cdot\ulh{1}(0)$. \\
Denoting $d := \sqrt{c_1^2 - c_2^2}$, $\varphi := \frac12 \log\frac{c_1+c_2}{c_1-c_2}$, $M := \frac{2 d}{d-1}$ and $\th := i \tau + \frac{i \varphi}{2}$ we can express
\[
\begin{array}{ll}
\tx(s) & = \frac{\sqrt{1+\gc^2}}{\gc \sqrt{2}} \int_0^s \sqrt{1 - d \cos(2 i \tau + i \varphi)} \dif \tau
= \frac{\sqrt{1-d}\sqrt{1+\gc^2}}{i \gc \sqrt{2}} \int\limits_{\frac{i \varphi}{2}}^{i(s+ \frac{\varphi}{2})}\sqrt{1 - M \sin^2(\th)}\; \dif \th \\
       & = - \frac{i \sqrt{1-d}\sqrt{1+\gc^2}}{\gc \sqrt{2}}(E((s+\frac{\varphi}{2}) i, M) - E((\frac{\varphi}{2}) i, M)),
\end{array}
\]
where we note that
$d \leq \frac{1}{1+\gc^2}\frac12 (\|\ulh{2}(0)-\ulh{1}(0)\|^2+\|\ulh{2}(0) + \ulh{1}(0)\|^2) \leq 1$.
\end{proof}
\begin{figure}
\centerline{
\includegraphics[width=0.6\linewidth]{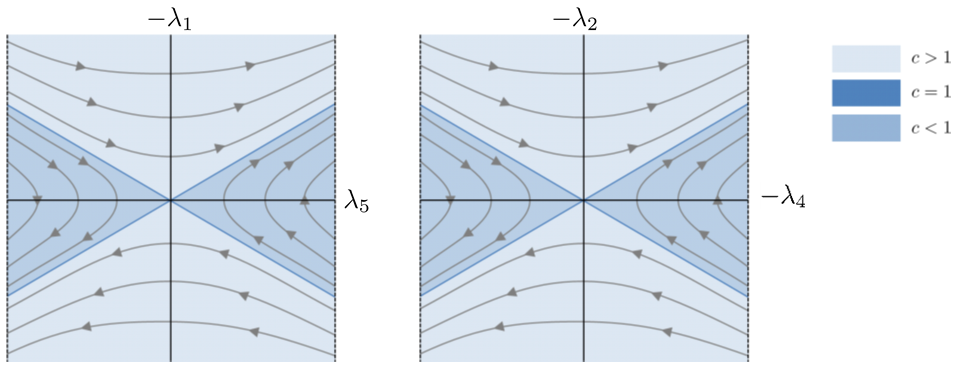}
}
\caption{Phase portraits corresponding to the components of $\ulh{1}$, $\ulh{2}$ satisfying the second order differential equation $\frac{\dif^2}{\dif s^2}\ulh{1}(s) = \ulh{1}(s)$ along the geodesics. Several orbits are shown with arrows.}
\label{phaseplot}
\end{figure}
\begin{corollary} \label{cor:aftermain}
The exponential map $Exp:\mathcal{D}_0 \to \R^{3} \rtimes S^2$ defined in Definition~\ref{def:exp}
is given by
\[
Exp(\lambda(0),L)=(\ul{x}^*(L), \ul{n}^{*}(L)),
\]
where $\ul{n}^{*}(L)=\frac{d}{ds}\ul{x}^{*}(s)|_{s=L}$ and where the spatial part of the geodesic
$\ul{x}^{*}(L)=(x^*(L),y^*(L),z^*(L))$ is explicitly given by (\ref{transform}). Here the tangent equals
\[
\frac{d}{ds}\ul{x}^{*}(s)|_{s=L}=\ul{x}^{*\, '}(L)= (\tilde{R}(0))^T \tilde{\ul{x}}'(L),
\]
with $\tilde{R}(0)$ given by (\ref{arnought1}) and (\ref{arnought2}),
and $\tilde{x}'(L)= \frac{1}{\gothic{c}} \lambda_{3}(L)$ with $\lambda_{3}(L)$ given by (\ref{ls}),
and $(\tilde{y}'(L),\tilde{z}'(L))^T= A(L) \; (\tilde{y}(L),\tilde{z}(L))^T$ with $A(L)$ given by (\ref{A}) and
$\tilde{y}(L),\tilde{z}(L)$ given by (\ref{intglA}).
\end{corollary}

\subsection{Geometric Properties of the Stationary Curves}\label{ch:geom}
Let a stationary curve of $\Pcurve$ in $\R^3$ be given by $\bx : [0, L] \rightarrow \R^3$ parameterized by arclength denoted by $s$. Let the unit tangent, the unit normal and the unit binormal for this curve be given by $\bT$, $\bN$, and $\bB$ respectively.
Let $\kappa$ and $\tau$ denote curvature and torsion, then the
 Frenet-Serret equations are
\begin{equation}\label{FSframe}
    \frac{\dif}{\dif s} \left(
        \begin{array}{c}
            \bT(s)  \\
            \bN(s)  \\
            \bB(s)
        \end{array}
    \right) = \left(
        \begin{array}{ccc}
            0           & \kappa(s) & 0         \\
            - \kappa(s) & 0         & \tau(s)   \\
            0           & - \tau(s) & 0
        \end{array}
    \right) \left(
        \begin{array}{c}
            \bT(s)  \\
            \bN(s)  \\
            \bB(s)
        \end{array}
    \right).
\end{equation}

Let us first study the curvature and signed torsion of the spatial projection of the sub-Riemannian geodesics.
Let us recall the first integral constants $W = - \h_2 \h_5 - \h_1 \h_4$ and $\gc = \sqrt{\|\ulh{1}\|^2 - \|\ulh{2}\|^2+1}$ in Lemma~\ref{lemme:coadjorb}.
Furthermore, we have that
\begin{equation}\label{one}
R' = \frac{dt}{ds}\dot{R}= \h_{3}^{-1}\dot{R} = \h_{3}^{-1} R\left(
            \begin{array}{ccc}
                     0     &     0      &    \h_5 \\
                     0     &     0      & -\h_4    \\
                -\h_5  & \h_4  &    0
            \end{array}
        \right).
\end{equation}
and therefore {\small
\begin{equation} \label{two}
\bx '(s)=R(s)\be_z \Rightarrow \bx''(s)=R(s)\left(
                                                 \begin{array}{c}
                                                  \frac{\h_5(s)}{\h_{3}(s)} \\
                                                   -\frac{\h_4(s)}{\h_{3}(s)} \\
                                                        0      \\
                                                 \end{array}
                                               \right)
                    \Rightarrow \bx '''(s)=R(s)\left(
                                                  \begin{array}{c}
                                                    \frac{d}{ds} \left(\frac{\h_5(s)}{\h_3(s)}\right) \\
                                                    -\frac{d}{ds} \left(\frac{\h_4(s)}{\h_3(s)}\right) \\
                                                      -\frac{\h_4^2(s) +\h_5^2(s)}{\h_3^2(s)}   \\
                                                  \end{array}
                                                \right).
\end{equation}
}
\begin{theorem}\label{ThVM}
The absolute curvature and the signed torsion of a stationary curve of $\Pcurve$ are given by
\begin{equation}\label{eq:curvandtor}
 \kappa= \frac{\sqrt{\h_4^2+ \h_5^2}}{\h_3}=\frac{\sqrt{1-\h_3^2}}{\h_3}, \ \qquad \tau = \frac{W}{\h_4^2+\h_5^2},
\end{equation}
with momentum components $\h_i$ given by (\ref{ls}).
We have the following fundamental relation between curvature and torsion
\begin{equation} \label{fund}
\tau(s) \, \kappa^2(s) = W \, (t'(s))^2.
\end{equation}
The torsion is bounded as follows
\begin{equation}\label{torbound}
        |W| \leq |\tau(s)| \leq \frac{2|W|}{\sqrt{(1 - \gc^2)^2 + 4 W^2} + 1 - \gc^2} \mbox{ for all } 0 \leq s \leq L \leq \smax.
    \end{equation}
\end{theorem}
\begin{proof}
In the proof we use the following properties of norm, inner product and cross product:
\begin{eqnarray*}
\forall_{R \in \SO} \forall_{ \ul{a}, \ul{b} \in \R^3}\,:\,  \|R \ul{a}\| = \|\ul{a}\|, \quad
                                                      (R \ul{a})\cdot(R \ul{b}) = \ul{a}\cdot \ul{b}, \quad
                                                      (R \ul{a})\times(R \ul{b}) = R (\ul{a}\times \ul{b}).
\end{eqnarray*}
First part follows by straightforward computation via~(\ref{one}) and~(\ref{two}).
By definition we have $\kappa(s) = \|\bx''(s)\|$. Thus by~(\ref{two}) and the Hamiltonian $H = \frac12(\h_3^2+\h_4^2+\h_5^2) = \frac12$ we obtain, that the curvature satisfies~(\ref{eq:curvandtor}). For arclength parametrized curve $\bx(s)$ in $\R^3$ the torsion is given by $\tau = \frac{(\bx' \times \bx'')\cdot \bx'''}{\|\bx' \times \bx''\|^2}$ (see e.g. \cite{Spiv75b}). Thus by~(\ref{two}) we have $(\bx' \times \bx'')\cdot \bx''' = \frac{W}{\h_3^2}$ and $\|\bx' \times \bx''\|^2 = \frac{\h_4^2+\h_5^2}{\h_3^2}$, and thereby  $\tau$ satisfies~(\ref{eq:curvandtor}). Eq.~\!(\ref{fund}) follows by $\lambda_3=\frac{ds}{dt}$ and~\!(\ref{eq:curvandtor}).

In order to prove bounds~(\ref{torbound}) we use expression for torsion $\tau = \frac{W}{\h_4^2+\h_5^2} \Rightarrow |\tau| = \frac{|W|}{\h_4^2+\h_5^2}$. The lower bound in~(\ref{torbound}) holds since $\h_4^2+\h_5^2\leq 1$ due to the Hamiltonian.

To prove the upper bound, we show that $\h_4^2(s)+\h_5^2(s) \geq \frac12 \left( \sqrt{(1-\gc^2)^2+ 4 W^2} + 1 -\gc^2\right)$ for all $s \in [0,\smax]$. To obtain the last inequality, we solve $\frac{\dif \left(\h_4^2(s)+\h_5^2(s)\right)}{\dif s} = 0$ using~(\ref{ls}):
\[
\smin := \arg\min_{s \in [0,\smax]}\{\h_4^2(s)+\h_5^2(s)\}=
\begin{cases}
0, \textrm{ if } -\h_5(0) \h_1(0) + \h_2(0) \h_4(0) >0, \\
\frac12 \log \frac{\|\ulh{2}(0) - \ulh{1}(0)\|}{\|\ulh{2}(0) + \ulh{1}(0)\|}, \textrm{ if } -\h_5(0) \h_1(0) + \h_2(0) \h_4(0) \leq 0.
\end{cases}
\]
Thus, evaluation $|\tau(\smin)| = \frac{|W|}{\h_4^2(\smin)+\h_5^2(\smin)}$ gives the upper bound in~(\ref{torbound}). Here we use identity
$\|\ulh{2}(0)+\ulh{1}(0)\|\|\ulh{2}(0)-\ulh{1}(0)\| = 
\sqrt{(1-\gc^2)^2 + 4 W^2}$.
\end{proof}
\begin{cor}\label{cor:torsion}
The cuspless spatial projections of sub-Riemannian geodesics of $\Pmec$ with $\sum_{i=1}^3\h_i^2(0)\neq 0$ (i.e. the stationary curves of $\Pcurve$) are planar if and only if $W=0$.
\end{cor}
Next we show that when taking the end conditions to be co-planar, one gets $W=0$ implying that planar curves are \emph{the only} cuspless geodesic of problem $\Pmec$ connecting these conditions.
\begin{theorem}\label{coplan}
Let $\bx:[0,s_{max}]\rightarrow\R^3$ be the spatial part of a cuspless sub-Riemannian geodesic of $\Pmec$ given by Theorem \ref{statcurv}, i.e. let $\bx$ be a stationary curve of $\Pcurve$. Then for any $s\in (0,s_{max}]$, one has that $\be_z$, $\bx(s)$ and $\bx'(s)$ are coplanar if and only if $\bx$ is a planar curve, i.e. $$\be_z\cdot(\bx(s)\times \bx'(s))=0\Leftrightarrow W=0.$$
\end{theorem}
\begin{proof}
If $W=0$ it follows by Corollary~\ref{cor:torsion} that the curve is planar. By Theorem~\ref{statcurv} we indeed get that $W=0\Rightarrow\ty(s)\equiv 0$ and  $\ty(s)\equiv0\Rightarrow \tilde{\bx}(0)\cdot(\tilde{\bx}(s)\times \tilde{\bx}'(s))=0\Rightarrow\be_z\cdot(\bx(s)\times \bx'(s))=0$.

Now we focus on the other direction of the implication. Let us consider curve $\bx':[0,s_{max}]\rightarrow S^2$. It can have a minimum curvature of $1$ if it aligns with a great circle on $S^2$. By Eq.\!~(\ref{two}) and (\ref{eq:curvandtor}) it follows that
the geodesic curvature $\mathcal{K}_{tan}$ of $\bx'(\cdot)$ is given as
\[
\mathcal{K}_{tan}(s) := \frac{\|\bx''(s)\times\bx'''(s)\|}{\|\bx''(s)\|^3} = \frac{\sqrt{\kappa^6(s)+\left(\kappa^2(s)+1\right)^2 W^2}}{\kappa^3(s)} = \sqrt{1+\frac{\left(\kappa^2(s)+1\right)^2W^2}{\kappa^6(s)}}.
\]
Thus $W\neq 0 \Rightarrow \mathcal{K}_{tan}(s)>1$ for any $s\in(0,s_{max})$. The curve $\bx'$ gets aligned with great circles only at cusp points where $\kappa(s)=\infty$ which never occurs in an interior point. Thus the curve $\ul{n}=\bx'$ can intersect a great circle on $S^2$ at most at two points. Therefore, any three points along this curve can never lie simultaneously on a plane passing through the origin. Thus $$\forall_{\tau,s>0}:\bx'(0)\cdot(\bx'(\tau)\times \bx'(s))\neq0 \Rightarrow \int\limits_0^s\left(\bx'(0)\cdot(\bx'(\tau)\times\ \bx'(s))\right)\dif\tau\neq0 \Rightarrow \bx'(0)\cdot(\bx(s)\times \bx'(s))\neq0. $$
So $W\neq0\Rightarrow\be_z\cdot(\bx(s)\times \bx'(s))\neq 0$. In turn this leads to $\be_z\cdot(\bx(s)\times \bx'(s))=0\Rightarrow W=0.$
\end{proof}

The following corollaries relate the planar cuspless sub-Riemannian geodesics in $(\SE,\Delta,\mathcal{G}_{1})$ to those in $(\SEtwo,\tilde{\Delta},\tilde{\mathcal{G}}_{1})$ with $\tilde{\Delta}={\rm ker}\{-\sin{\theta}\dif x + \cos{\theta}\dif y\}$ and $\tilde{\mathcal{G}}_{1}=(\cos{\theta}\dif x + \sin{\theta}\dif y)\otimes (\cos{\theta}\dif x + \sin{\theta}\dif y) + \dif\theta\otimes\dif\theta)$, cf.\!~\cite{DuitsSE2}.
\begin{cor}\label{cor:cop}
Given admissible coplanar end conditions for $\Pcurve$, the unique cuspless stationary curve connecting them is planar.
\end{cor}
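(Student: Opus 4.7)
The plan is to reduce this statement to a one-line application of Theorem \ref{coplan} at the single endpoint $s=L$. First I would recast the coplanarity hypothesis algebraically. Since the initial boundary condition has been normalized to $(\bx_0,\bn_0)=(\bzero,\be_z)$, the end conditions $(\bzero,\be_z)$ and $(\bx_1,\bn_1)$ are coplanar precisely when the three vectors $\be_z$, $\bx_1$ and $\bn_1$ span a subspace of dimension at most two in $\R^3$, i.e.
\[
\be_z\cdot(\bx_1\times\bn_1)=0.
\]
Degenerate subcases ($\bx_1=\bzero$, $\bx_1\parallel\be_z$, or $\bn_1\parallel\be_z$) satisfy this identity trivially, so no case split is required.

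Next, using $\bx(L)=\bx_1$ and $\dot{\bx}(L)=\bn_1$, the above identity reads $\be_z\cdot(\bx(L)\times\dot{\bx}(L))=0$. I would then apply the ``$\Leftarrow$'' direction of Theorem \ref{coplan} at $s=L\in(0,s_{max}]$ to conclude $W=0$. The hypothesis of that theorem is available because the corollary explicitly assumes a cuspless stationary curve, which is exactly the object described by Theorem \ref{statcurv}. Finally, invoking the forward direction of Theorem \ref{coplan}, i.e. $W=0\Rightarrow$ planar (which in turn relies on the explicit form of $\ty$ in the $W=0$ branch of Theorem \ref{statcurv}), yields that the unique cuspless sub-Riemannian geodesic joining the given end conditions must be planar.

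Since both implications of Theorem \ref{coplan} are already established, no genuine obstacle remains; the only minor step is the elementary translation of ``coplanar end conditions'' into the triple-product identity. Uniqueness of the cuspless stationary curve connecting admissible end conditions is inherited from the well-posedness of the boundary value problem alluded to earlier, and plays no role in the planarity argument itself.
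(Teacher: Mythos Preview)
Your planarity argument is correct and essentially identical to the paper's: both apply Theorem \ref{coplan} at $s=L$ to convert the coplanarity of $\be_z$, $\bx_1=\bx(L)$ and $\bn_1=\dot\bx(L)$ into $W=0$, and hence conclude that the geodesic is planar. The paper is simply more terse about the triple-product translation you spell out explicitly.

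The one substantive difference concerns the word \emph{unique}. You dismiss it as ``inherited from the well-posedness of the boundary value problem alluded to earlier,'' but in this paper well-posedness of $\mathbf{P_{curve}}$ in three dimensions is only Conjecture \ref{Conjecture}, not a proven fact. The paper's proof closes this gap differently: once $W=0$ the curve is planar, so it is in fact a cuspless sub-Riemannian geodesic of the two-dimensional problem in $\R^2\rtimes S^1$, and for \emph{that} problem uniqueness (indeed global optimality) is established in \cite{DuitsSE2,Boscain2013a}. It is this reduction to the known 2D result that legitimizes the word ``unique'' in the statement; your appeal to 3D well-posedness does not.
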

Now by the global optimality results in \cite{Boscain2013a,DuitsSE2} we have the following corollary.
\begin{cor}\label{cor:embedSe2}
Let $\tilde{\mathcal{R}}$ denote the range of the exponential map of cuspless geodesics in $\SEtwo$, which coincides with the set of admissible end conditions of $\Pcurve$ in $\SEtwo$. Then the set $\{\left(\bx_1, (\frac{x_1}{\sqrt{x_1^2+y_1^2}}\sin\theta_1, \frac{y_1}{\sqrt{x_1^2+y_1^2}}\sin\theta_1, \cos\theta_1)^T\right)|(\bx_1, \theta_1)\in \tilde{\mathcal{R}}\} \subset \mathcal{R}$ (recall (\ref{eq:rangePcurveSE3})) is a set of end conditions admitting a unique global cuspless minimizer of $\Pcurve$.
\end{cor}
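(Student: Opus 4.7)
The plan is to deduce this corollary from Theorem \ref{coplan} together with the preceding corollary on planar sub-Riemannian geodesics. First I would verify that the prescribed boundary data are coplanar with the initial direction: identifying $\bx_1 = (x_1,y_1) \in \R^2$ with $(x_1,y_1,0) \in \R^3$ and setting $\hat{u} := (x_1,y_1,0)/\sqrt{x_1^2+y_1^2}$, one reads off $\bn_1 = \sin\theta_1\,\hat{u} + \cos\theta_1\,\be_z$ and $\bx_1 = \sqrt{x_1^2+y_1^2}\,\hat{u}$, so that $\be_z$, $\bx_1$, and $\bn_1$ all lie in the vertical $2$-plane $\Pi := \mathrm{span}\{\be_z,\hat{u}\}$.

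Next I would apply Theorem \ref{coplan} to any candidate cuspless sub-Riemannian geodesic $\gamma$ of $\mathbf{P_{curve}}$ ending at $(\bx_1,\bn_1)$: coplanarity of $\be_z$, $\bx(L)$, and $\dot{\bx}(L)$ forces $W=0$, so $\gamma$ is planar; using $\dot{\bx}(0)=\be_z$, rotational invariance about the $z$-axis, and the explicit $W=0$ formula of Theorem \ref{statcurv}, I would conclude $\gamma \subset \Pi$. By the previous corollary, such planar 3D cuspless geodesics are in one-to-one correspondence with cuspless sub-Riemannian geodesics in $SE(2)$, under the isometric identification of $\Pi$ with $\R^2$ sending the 2D initial tangent to $\be_z$ and the perpendicular 2D axis to $\hat{u}$. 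Tracing this identification through Theorem \ref{statcurv}, the 2D end data associated to $(\bx_1,\bn_1)$ are exactly the pair $(\bx_1,\theta_1)\in\mathcal{R}$ in the statement.

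The assumption $(\bx_1,\theta_1)\in\mathcal{R}$ gives existence and uniqueness of the 2D global cuspless minimizer via \cite{DuitsSE2,Boscain2013a}. Its lift to $\Pi \subset \R^3$ is a cuspless 3D sub-Riemannian geodesic with the required end condition; global optimality among cuspless 3D competitors follows because any such competitor is already confined to $\Pi$ by Theorem \ref{coplan} and therefore corresponds to a 2D cuspless curve whose energy is no less than that of the 2D minimizer. The main obstacle I expect is purely notational bookkeeping: making precise the identification of $\Pi$ with $\R^2$ and then verifying that the formula in the corollary statement is exactly what arises under this identification, with proper care of the degenerate case $(x_1,y_1)=(0,0)$ where $\hat{u}$ is undefined and the minimizer is a straight segment along $\be_z$.
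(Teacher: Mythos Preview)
Your approach is essentially the one the paper intends: the paper presents this corollary as an immediate consequence of the preceding corollary (which in turn invokes Theorem \ref{coplan} at $s=L$ and the $SE(2)$ results of \cite{DuitsSE2,Boscain2013a}), and you have correctly unpacked that chain of reasoning, including the identification of the embedded plane $\Pi$ and the bookkeeping between 2D and 3D boundary data.

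There is, however, one genuine slip in your global-optimality step. You write that ``any such competitor is already confined to $\Pi$ by Theorem \ref{coplan}''. But Theorem \ref{coplan} is a statement about cuspless sub-Riemannian \emph{geodesics} (local minimizers of $\mathbf{P_{curve}}$), not about arbitrary admissible curves with coplanar end data. An arbitrary cuspless curve joining $(\bzero,\be_z)$ to $(\bx_1,\bn_1)$ need not lie in $\Pi$, so you cannot reduce the 3D infimum to the 2D infimum by confining all competitors. The correct argument is the one the paper has in mind: any global cuspless minimizer of $\mathbf{P_{curve}}$ in $\R^3$ is in particular a stationary curve; Theorem \ref{coplan} then forces $W=0$, hence planarity; the preceding corollary identifies it with the unique 2D global minimizer from \cite{DuitsSE2,Boscain2013a}. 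This yields uniqueness directly, and existence comes from lifting the 2D minimizer into $\Pi$ (it is a 3D stationary curve with the prescribed end data, and since it is the only stationary candidate, it is the minimizer). With this correction your proof goes through and matches the paper's reasoning.
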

Next we show that the sub-Riemannian geodesics do not self intersect or roll up, despite the fact that the absolute curvature $\kappa(s)\rightarrow\infty$ as $s\uparrow \smax$.
\begin{cor}\label{cor:mon}
The cuspless spatial projections of sub-Riemannian geodesics of $\Pmec$ with $\h_1^2(0)+\h_2^2(0)+\h_3^2(0) \neq 0$, have a monotonically increasing component along  $\gc^{-1}\left(\h_1(0) \be_x + \h_2(0)\be_y + \h_3(0)\be_z\right)$. Hence they do not self intersect or roll up.
\end{cor}
\begin{proof}
From Eq.~\!(\ref{transform}) we have $\ul{e}_{\tilde{x}}= (\tilde{R}(0))^T \ul{e}_x=\gc^{-1}\left(\h_1(0) \be_x + \h_2(0)\be_y +  \h_3(0)\be_z\right)$. By Theorem~\ref{statcurv} we have $\tx'(s)>0$ for all $s\in(0,s_{max})$ and the result follows.
\end{proof}
Next we want to study bounds on the set $\mathcal{R}$, recall (\ref{eq:rangePcurveSE3}).
 Our numerical investigations clearly show that the spatial part of all points in $\mathcal{R}$ is contained in the half space $z\geq 0$, and that the plane $z=0$ is only reached with U-shaped planar geodesics (i.e. $W=0$, $\gothic{c}<1$) at $s=\smax$.
These numerical observations inspired
 us to find the natural generalization
of formal results in \cite[Thm.7\&8]{DuitsSE2} on the $\SEtwo$-case.
We partly succeeded as we show in the next three corollaries, which provide bounds on the set $\mathcal{R}$.
\begin{cor} \label{th:A}
Let $s \mapsto \gamma(s)=(x(s),y(s),z(s),R(s))$ be a sub-Riemannian geodesic in $(\SE,\Delta,\mathcal{G}_{1})$ with $\h_6=0$ and $\h_1^2(0)+\h_2^2(0)+\h_3^2(0) \neq 0$, departing from $e=(\ul{0},I)$, s.t. the spatial
projection is cuspless.

If $\ulh{1}(0) \cdot \ulh{2}(0) \geq 0$, then $z(s) >0$ for all $s \in (0,s_{max})$.

If $\ulh{1}(0) \cdot \ulh{2}(0) < 0$, then $z(s) >0$ for all $s \in (0,s_{m})$, with
$$s_m=\begin{cases}
\log\frac{|\h_4(0) + \h_5(0)|}{|\h_4(0) - \h_5(0)|}, \textrm{ if } \ulh{1}(0) = -\ulh{2}(0),\\
\log\frac{\|\ulh{2}(0)+\ulh{1}(0)\|}{\|\ulh{2}(0)-\ulh{1}(0)\|}, \textrm{ otherwise. }\end{cases}$$
\end{cor}
\begin{proof}
If $\ulh{1}(0) \cdot \ulh{2}(0) \geq 0$, then $\ulh{1}(\tau) \cdot \ulh{2}(\tau) \geq 0$ for all $\tau \in (0,s_{max})$, which implies $A(\tau) \geq 0$ for all $\tau \in (0,s_{max})$, where we recall Eq.~\!(\ref{A}). Now by Theorem~\ref{statcurv}, we have
\[
\begin{array}{ll}
z(s) &=(\ul{x}(s),\ul{e}_{z})=((\tilde{R}(0))^T(\tilde{\ul{x}}(s)-\tilde{\ul{x}}(0)),\ul{e}_{z})= (\tilde{\ul{x}}(s)-\tilde{\ul{x}}(0),\tilde{R}(0)\ul{e}_z)=(\tilde{\ul{x}}(s)-\tilde{\ul{x}}(0),\tilde{\ul{x}}'(0)) \\
 &= \tilde{x}(s) \tilde{x}'(0) \; +\; \left(\; (e^{\int \limits_{0}^s A(\tau)\, {\rm d}\tau}-I) \left(
 \begin{array}{c}
 \tilde{y}(0) \\
 \tilde{z}(0)
 \end{array}
 \right), A(0)\, \left(
 \begin{array}{c}
 \tilde{y}(0) \\
 \tilde{z}(0)
 \end{array}
 \right)\; \right) \\
  &= \gothic{c}^{-2} \lambda_{3}(s)\, \int \limits_{0}^{s} \lambda_3(\tau)\, {\rm d}\tau \; +\; \left(\; (A(0))^T(e^{\int \limits_{0}^s A(\tau)\, {\rm d}\tau}-I) \left(
 \begin{array}{c}
 \tilde{y}(0) \\
 \tilde{z}(0)
 \end{array}
 \right),\, \left(
 \begin{array}{c}
 \tilde{y}(0) \\
 \tilde{z}(0)
 \end{array}
 \right)\; \right).
\end{array}
\]
As cusps do not occur on the interior of spatially projected curve $\ul{x}(\cdot)=(x(\cdot),y(\cdot),z(\cdot))$, the first term $\gothic{c}^{-2} \h_{3}(s)\, \int \limits_{0}^{s} \h_3(\tau)\, {\rm d}\tau$ is strictly positive
for all $s \in (0,s_{max})$. Regarding the second term, we note that $A(\tau_{1})$ and $A(\tau_2)$ commute for all $\tau_{1},\tau_2>0$ and each $A(\tau)$ can be diagonalized.
Now $A^{T}(0)= C \, A^{-1}(0)$ for some $C>0$. Thereby, both operators $A^T(0)$ and $\left(e^{\int_{0}^s A(\tau)\, {\rm d}\tau}-I\right)$ commute, have a common eigensystem, and are either semi-positive definite or semi-negative definite, thus
\[
\begin{array}{c}
z(s) > \left(\; (A(0))^T(e^{\int \limits_{0}^s A(\tau)\, {\rm d}\tau}-I) \left(
 \begin{array}{c}
 \tilde{y}(0) \\
 \tilde{z}(0)
 \end{array}
 \right),\, \left(
 \begin{array}{c}
 \tilde{y}(0) \\
 \tilde{z}(0)
 \end{array}
 \right)\; \right) \geq 0 \desda \\
 \desda
 \left\{
 \begin{array}{l}
 A^T(0)\geq 0 \textrm{ and }e^{\int \limits_{0}^s A(\tau)\, {\rm d}\tau}-I\geq 0, \\
 \textrm{or }
 A^T(0)\leq 0 \textrm{ and }e^{\int \limits_{0}^s A(\tau)\, {\rm d}\tau}-I\leq 0
 \end{array}
 \right.
 \desda
  \left\{
 \begin{array}{l}
 \ulh{1}(0) \cdot \ulh{2}(0)\geq 0, \\
 \textrm{or }
 \ulh{1}(0) \cdot \ulh{2}(0)\leq 0 \textrm{ and }m(s) \cos (\tpsi(s)) \leq 1,
 \end{array}
 \right.
\end{array}
\]
where the scalar multiplier
\[
m(s)= \sqrt{\frac{1-|\lambda_{3}(s)|^2 -W^{2}\gothic{c}^{-2}}{1-|\lambda_{3}(0)|^2 -W^{2}\gothic{c}^{-2}}} \leq 1 \desda \lambda_{3}(s) \geq \lambda_{3}(0)\ ,
\]
comes from (\ref{A}). Now $s_m$ is chosen as the 1st positive root of $\lambda_3(s)=\lambda_{3}(0)$ and the result follows as $m(s)\leq 1 \Rightarrow m(s) \cos (\tpsi(s))-1 \leq 0$.
\end{proof}
\begin{cor}\label{cor:zpw0}
Let $W=0$ then all cuspless sub-Riemannian geodesics in $(\SE,\Delta,\mathcal{G}_{1})$ with $\h_6=0$  and $\h_1^2(0)+\h_2^2(0)+\h_3^2(0) \neq 0$, departing from $e=(\ul{0},I)$ stay in the upper half space $z\geq 0$.
\end{cor}
\begin{proof}
If $W=0$ the spatial part of such a sub-Riemannian geodesic is coplanar by Theorem~\ref{coplan}. Application of Corollary~\ref{cor:embedSe2} and \cite[Thm.7\&8]{DuitsSE2} yields the result.
 \end{proof}

\begin{cor} \label{th:B}
Let $s \mapsto \gamma(s)=(x(s),y(s),z(s),R(s))$ be a sub-Riemannian geodesic in $(\SE,\Delta,\mathcal{G}_{1})$ with $\h_6=0$ and $\h_1^2(0)+\h_2^2(0)+\h_3^2(0) \neq 0$, departing from $e=(\ul{0},I)$, s.t. its spatial projection does not have (interior) cusps. Assume it departs from a cusp and ends towards a cusp, i.e. $\h_{3}(0)=0=\h_{3}(\smax)$, where $\smax>0$ by definition.

Then
$z(s)>0$ for all $s \in (0,\smax)$,
and
$z(s)=0$ $\desda (W=0 \textrm{ and } s \in \{0,\smax\} \textrm{ and }\gothic{c}<1$).
\end{cor}
\begin{proof}
If $W \neq 0 $ and $\lambda_{3}(0)=0$, then $\|\ulh{2}(0)\|=1$ and $\|\ulh{1}(0)\|=\gothic{c}$, and
by Theorem~\ref{statcurv} one has
\[
z(s)=\tilde{z}(s)-\tilde{z}(0)= (m(s) \cos \tpsi(s)-1)\frac{\ulh{1}(0)\cdot \ulh{2}(0)}{\gothic{c}^2} \quad \textrm{   for all } s \in [0,\smax],
\]
with $m(s)<1$ if $s<\smax$ and $\ulh{1}(0)\cdot \ulh{2}(0)<0$ mandatory for $\smax>0$ in case $\h_{3}(0)=0$.
Now $m(\smax)=1$ but even then due to  $\tpsi(s)\neq0$ we have $W\neq 0  \Rightarrow z(\smax)\neq 0$, recall (\ref{eq:phi}).

If $W=0$ and $\h_{3}(0)=0$ then by Eq.~\!(\ref{help}) in Theorem~\ref{statcurv}, we have for this specific case,
\[
z(s)=\tilde{z}(s)-\tilde{z}(0)= \frac{(\ulh{2}(s)- \ulh{2}(0))\cdot \ulh{1}(0)}{\gothic{c}^2}= \frac{1}{\gc^2} \left(\gothic{c}^2 \sinh s -\gothic{c} (\cosh s-1)\right).
\]
Now only for $\gothic{c}<1$ we find two nonnegative roots $s=0$ and $s=\log \frac{1+\gothic{c}}{1-\gothic{c}}=s_{max}$. The parabola corresponding to the quadratic equation
arising when setting $p=e^s$ and multiplying with $p$ is a parabola that opens downward so that $z(s)>0$ if $s \in (0,s_{max})$.
\end{proof}
\begin{cor}\label{cor:10}
Let $(\bx_1,-\be_z)$ be the end condition of $\Pcurve$ with the initial condition $(\bzero,\be_z)$. Then, a solution to problem $\mathbf{P_{curve}}$ exists if and only if $\bx_1\cdot\be_z=0$. Moreover, this condition is only possible for curves departing from a cusp and ending in a cusp.
\end{cor}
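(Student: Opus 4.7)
The claim has two parts: (i) the final position must satisfy $\bx_1\cdot\be_z=0$, and (ii) such a geodesic necessarily starts and ends at cusps. My plan is to reduce to a planar problem via Theorem~\ref{coplan}, bound the total turning angle to force the cusp-to-cusp structure, and then exploit a reflection symmetry in $s$ to obtain the orthogonality condition.

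Since $\bn_1=-\be_z$ is parallel to $\be_z$, the triple product $\be_z\cdot(\bx_1\times\bn_1)$ vanishes identically. I would invoke Theorem~\ref{coplan} to conclude that any cuspless sub-Riemannian geodesic with these end conditions satisfies $W=0$ and is therefore planar. The rotational symmetry of Corollary~\ref{AllSym} then allows me to place the curve in the $xz$-plane, writing $\bw(s)=w(s)\hat{e}$ for some fixed $\hat{e}\in\R^2$, so that the vector ODE $\ddbw=\beta^2\bw$ reduces to the scalar equation $\ddot w=\beta^2 w$. Parameterizing the tangent as $\dot\bx(s)=(\sin\theta(s),0,\cos\theta(s))$, the endpoint condition $\bn_1=-\be_z$ becomes $\theta(L)=\pi$, with $\dot\theta(s)=\kappa(s)=\beta w(s)/\sqrt{1-w(s)^2}$, and the quantity to be shown zero is $\bx_1\cdot\be_z=\int_0^L\cos\theta(s)\,\dif s$.

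For the cusp-to-cusp necessity, I multiply $\ddot w=\beta^2 w$ by $\dot w$ and integrate to obtain the first integral $\dot w^2=\beta^2(w^2+\gc^2-1)$. Changing variables from $s$ to $w$ and then setting $u=w^2$ reduces the contribution of each monotone segment of $w$ to
\[
\int\!\!\frac{\dif u/2}{\sqrt{(1-u)\,(u-(1-\gc^2))}}.
\]
The substitution $u=(1-\gc^2)+\gc^2\sin^2\phi$ evaluates this to $\arcsin\sqrt{(u-(1-\gc^2))/\gc^2}$ between the endpoints, which is bounded by $\pi/2$ and attains $\pi/2$ precisely when $u=1$. Hence each monotone piece contributes at most $\pi/2$ to $\theta(L)$, with equality iff $|w|=1$ at the endpoint of that piece; therefore $\theta(L)=\pi$ forces $|w(0)|=|w(L)|=1$, i.e.\ cusp-to-cusp. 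I expect the main subtlety to be verifying that this bound remains strict for curves where $w$ changes sign (possible when $\gc\geq 1$), but the monotone-segment argument applies uniformly.

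Finally, in the cusp-to-cusp case I may assume $w(0)=w(L)=1$ (the case $-1$ being analogous), which fixes the explicit solution $w(s)=\cosh(\beta s)-\tanh(\beta L/2)\sinh(\beta s)$. Applying the addition formulas for $\cosh$ and $\sinh$ yields the reflection identity $w(L-s)=w(s)$; this is the only nontrivial calculation and follows from $\cosh(\beta L)-\tanh(\beta L/2)\sinh(\beta L)=1$. From $w(L-s)=w(s)$ one gets $\kappa(L-s)=\kappa(s)$ and hence $\theta(L-s)=\pi-\theta(s)$, so that $\cos\theta(L-s)=-\cos\theta(s)$. Substituting $s\mapsto L-s$ in the integral gives $\bx_1\cdot\be_z=\int_0^L\cos\theta(s)\,\dif s=-\int_0^L\cos\theta(s)\,\dif s$, forcing $\bx_1\cdot\be_z=0$.
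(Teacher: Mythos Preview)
Your argument is correct, but it takes a substantially different and longer route than the paper. The paper's proof of the cusp-to-cusp assertion is a one-line application of the explicit geodesic formulas of Theorem~\ref{statcurv}: since $\bx(s)=\tR(0)^{T}(\tilde\bx(s)-\tilde\bx(0))$, the hypothesis $\dot\bx(0)=-\dot\bx(L)$ transfers to $\dot{\tilde\bx}(0)=-\dot{\tilde\bx}(L)$; reading off the first component and using \eqref{tildecoordx}, one has $\dot{\tilde x}(s)=\gc^{-1}\sqrt{1-\|\bw(s)\|^{2}}\ge 0$, so $\dot{\tilde x}(0)=-\dot{\tilde x}(L)$ forces both to vanish, i.e.\ $\|\bw(0)\|=\|\bw(L)\|=1$. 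Planarity is never invoked for this step. For $\bx_1\cdot\be_z=0$ the paper then simply refers forward to Corollary~\ref{znegcusp}.

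Your approach earns something in return for its length: it is self-contained and more elementary. You first establish $W=0$ via Theorem~\ref{coplan}, then settle the cusp-to-cusp claim by an explicit bound on the total turning $\int_0^L\dot\theta\,\dif s$, and finally prove $z(L)=0$ by a clean reflection argument $w(L-s)=w(s)$, without needing the tilde-coordinate machinery or the forward reference to Corollary~\ref{znegcusp}. One small remark on your Step~3: the substitution $u=w^2$ is only injective when $w$ keeps a fixed sign, so for the $\gc>1$ case (where $w$ is monotone but may cross zero) you should split at $w=0$ before passing to $u$; the two pieces then cancel partially and the bound $|\theta(L)|<\pi/2$ on that single monotone segment follows, so your conclusion stands.
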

\begin{proof}\
Let $\bx$ be a solution to problem $\Pcurve$ with $\bx'(0)=-\bx'(L)$ for some $0<L\leq \smax$. So we have $\tilde{\bx}'(0)=-\tilde{\bx}'(L)$, which implies $\tx'(0)=-\tx'(L)$. But this is possible if and only if $\tx'(0)=0=\tx'(L)$, which is possible only if $\|\ulh{2}(0)\|=1$ and $L=\smax$, i.e., if the geodesic both starts and ends at a cusp. Then $z(\smax)=0$ (see Corollary~\!\ref{th:B}).
\end{proof}

\subsection{Symmetries of the Exponential Map}\label{ch:symm}
We now describe the symmetries of the exponential map of $\Pcurve$, recall Definition~\ref{def:exp}.
Here we are interested in the symmetries that retain curvature and torsion along the curve and preserve direction of time (i.e. we do not consider the symmetries involving time inversion $s\mapsto L-s$, cf.~\cite{Moiseev}). 
From the conservation law
\begin{equation}
(\h_1(s))^2+(\h_2(s))^2 - ((\h_4(s))^2+(\h_5(s))^2) = \gc^2-1
\end{equation}
and Eq.~(\ref{eq:curvandtor}) in Theorem~\ref{ThVM}, we deduce the following corollary.
\begin{cor}\label{AllSym}
Let $P\in\R^{6\times 6}$ be given by
\begin{equation}
P =
\left(
  \begin{array}{cc|cc}
    Q & 0 & 0 & 0 \\
    0 & 1 & 0 & 0 \\ \hline
    0 & 0 & \det(Q)Q & 0 \\
    0 & 0 & 0 & 1 \\
  \end{array}
\right)
\end{equation}
with $Q\in O(2)$ arbitrary. Then we have the following symmetry property of the exponential map:
\[
\widetilde{Exp}_e(\blambda(0) P^T,l) = \left(\bzero,\left(
                          \begin{array}{cc}
                            Q & 0 \\
                            0 & 1
                          \end{array}
                        \right)
\right)\cdot \widetilde{Exp}_e(\blambda(0),l)\cdot\left(\bzero,\left(
                          \begin{array}{cc}
                            Q^T & 0 \\
                             0  & 1
                          \end{array}
                        \right)
\right).
\]
Here $\blambda = (\h_1,\ldots,\h_6)$ and the group product $\cdot$ is on the Euclidean group $E(3)$.
\end{cor}
\begin{proof} Proof can be found in~\cite{GhoshDuitsArxiv}.
\end{proof}
\begin{figure}[ht]
\begin{minipage}[b]{0.41\linewidth}
\centering
\includegraphics[width=\textwidth]{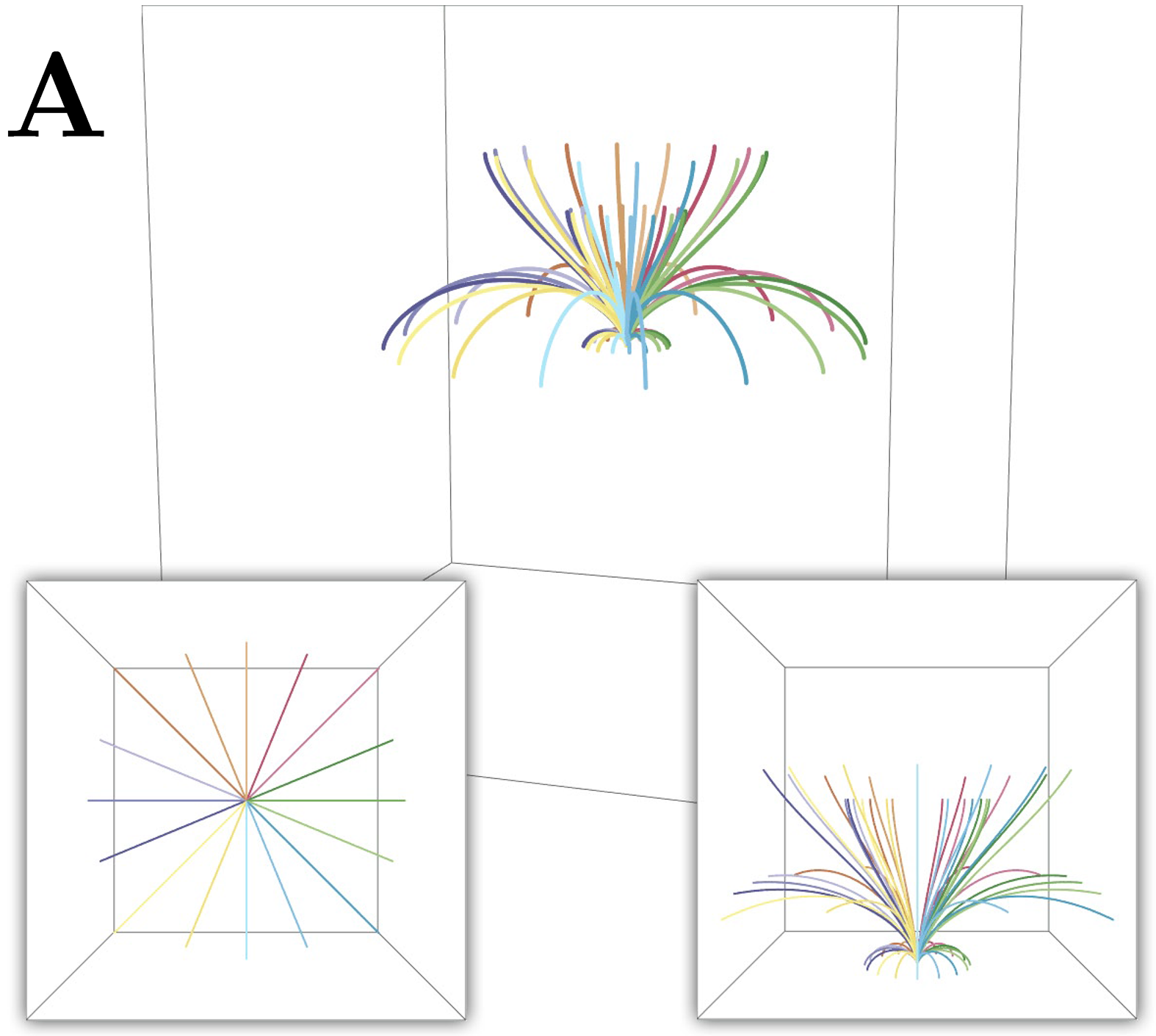}
\end{minipage}
\hspace{0.01\linewidth}
\begin{minipage}[b]{0.32\linewidth}
\centering
\includegraphics[width=\textwidth]{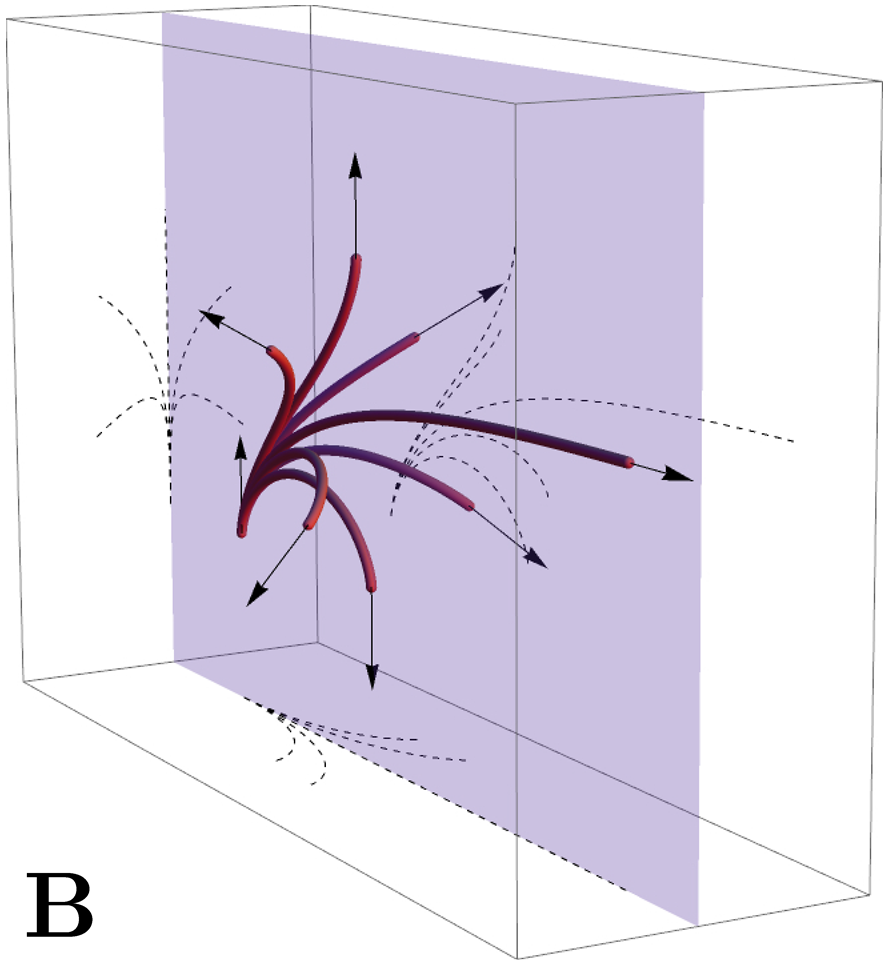}
\end{minipage}
\centering
\caption{\textbf{A:} Rotational symmetries in case of several planar cuspless sub-Riemannian geodesics of problem $\mathbf{P_{mec}}$ departing from $e$ in direction of $\be_z$.
\textbf{B:} Reflectional symmetry of certain cuspless geodesics of $\mathbf{P_{mec}}$. These curves are produced by rotating $\ulh{1}(0)$ by certain angles while keeping $\ulh{2}(0)$ fixed. The plane of reflection contains the middle curve with $\ulh{1}(0)$ parallel to $\ulh{2}(0)$.}
\label{symmetries}
\end{figure}

Figure \ref{symmetries} depicts both the rotational and reflectional symmetries of the cuspless sub-Riemannian geodesics of problem $\Pmec$. To generate the figures, we have set
\begin{equation}
\ulh{2}(0) = \|\ulh{2}(0)\|(\cos\theta,\sin\theta)^T \mbox{ and } \ulh{1}(0) = \|\ulh{1}(0)\|(\cos(\theta-\Theta),\sin(\theta-\Theta))^T.
\end{equation}
Here $\Theta$ denotes the angle between $\ulh{2}(0)$ and $\ulh{1}(0)$. For both these figures we fixed $\|\ulh{2}(0)\|$ and $\|\ulh{1}(0)\|$. For Figure \ref{symmetries}:\textbf{A} we took $\Theta=0$ and varied $\theta$. For Figure \ref{symmetries}:\textbf{B} we fixed $\theta$ and varied $\Theta$. The plane of reflection corresponds to $\Theta = 0$. See~\cite[Fig. 5]{GhoshDuitsArxiv} for an intuitive explanation of the relation of $\Theta$ with respect to the momentum variables.
\section{Numerical Analysis of Problem $\Pcurve$ \label{ch:num}}

\subsection{Numerical Computations of the Jacobian of Exponential Map}\label{ch:Jac}
In this section we provide a numerical investigation into the absence of conjugate points on sub-Riemannian geodesics associated to the problem $\Pcurve$. Recall that a conjugate point is a critical value of the exponential map (cf.~Definition~\ref{def:exp}), i.e. at such a point one has $\det\left(\frac{\partial(Exp(\h(0),L))}{\partial(\h(0),L)}\right) = 0$.

Denote by $J$ the Jacobian of the exponential map, i.e.
$$J = \det\left(\frac{\partial(Exp(\h_1(0),\h_2(0),\h_4(0),\h_5(0),L))}{\partial(\h_1(0),\h_2(0),\h_4(0),\h_5(0),L)} \right).$$
To compute the Jacobian numerically, we approximate the partial derivatives by finite differences:
{\small
\begin{eqnarray*}
&\displaystyle \frac{\partial(Exp(\h_1(0),\h_2(0),\ldots,L))}{\partial(\h_1(0))} \approx \frac{Exp(\h_1(0)+ \Delta,\h_2(0),\ldots ,L)-Exp(\h_1(0)- \Delta,\h_2(0),\ldots,L)}{2 \Delta },\\
&\displaystyle \cdot \cdot \cdot\\
&\displaystyle \frac{\partial(Exp(\h_1(0),\h_2(0),\ldots,L))}{\partial L} \approx \frac{Exp(\h_1(0),\h_2(0),\ldots ,L+ \Delta)-Exp(\h_1(0),\h_2(0),\ldots,L- \Delta)}{2 \Delta }.
\end{eqnarray*}
}
We verified that the Jacobian is always positive for a million random points within the domain $\mathcal{D}_0$ of the exponential map of $\Pcurve$, recall (\ref{domain}). The points were determined as follows. The first integrals $\gc^2 = \sum_{i=1}^{3}\h_i^2(0)$ and $\sum_{i=3}^{5} \h_i^2(0) = 1$ allow us to introduce coordinates
\begin{equation*}
\begin{cases}
\h_4(0) = \sin\phi_2 \cos\phi_1,\\
\h_5(0) = \sin\phi_2 \sin\phi_1,\\
\h_3(0) = \cos\phi_2,
\end{cases} \qquad
\begin{cases}
\h_1(0) = r \cos\phi_3,\\
\h_2(0) = r \sin \phi_3,
\end{cases} \quad
\textrm{ where } r = \sqrt{\gc^2 -  \cos^2\phi_2} = \sqrt{\gc^2-\h_3^2(0)}.
\end{equation*}
By the rotational symmetry presented in Corollary~\ref{AllSym}, we can reduce one parameter by setting $\phi_1 = 0$. Furthermore, we recall that $\h_3(0) \geq 0$, which implies $-\frac{\pi}{2}\leq \phi_2 \leq \frac{\pi}{2}$. Thus, we can parameterize the domain of exponential map by
$\phi_2 \in [-\frac{\pi}{2}, \frac{\pi}{2}], 
\phi_3 \in [0, 2 \pi], 
\gc \in [\cos\phi_2, +\infty), 
L \in (0, \smax].$
We consider $\gc \in [\cos\phi_2, 10]$, and compute the Jacobian in both a random and a uniform grid on $(\phi_2, \phi_3, \gc, L)$. Here the restriction of $\gc$ from above is not crucial, as $\smax \to 0$ when $\gc \to \infty$. Furthermore, the absence of conjugate points for short arcs of geodesics follows from general theory (see~\!\cite{notes}). Finally, by Corollary~\ref{cor:maxsmax} we have $\smax \leq \frac12 \log\frac{(1+\gc)^2}{1 + \gc^2}$, that implies $\smax < 0.1$ for $\gc > 10$.

In Figure~\ref{fig:jacobian} we show several trajectories of different types (U-shaped curves for $\gc <1$ and S-shaped curves for $\gc>1$) and corresponding plots of the Jacobian for $s \in [0,\smax]$. Remarkably, the Jacobian is not just positive, it is even a monotonically increasing function of $s$ for the range of the plot. A similar behavior for the Jacobian can be seen on the closely related problem $\Pcurve$ on $\R^2$ (see.~\cite{DuitsSE2}), where the absence of conjugate points was proved.

\begin{figure}[ht]
\centering
\hspace{0.05\linewidth}
\begin{minipage}[c]{0.2\linewidth}
\centering
\includegraphics[width=\textwidth]{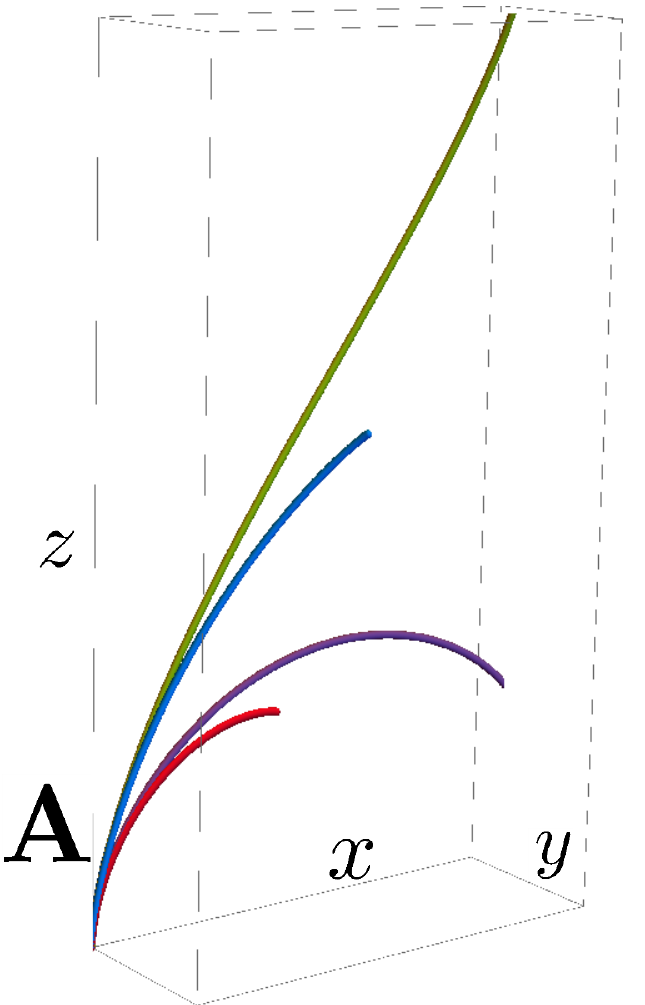}
\end{minipage}
\hspace{0.05\linewidth}
\begin{minipage}[c]{0.5\linewidth}
\centering
\includegraphics[width=\textwidth]{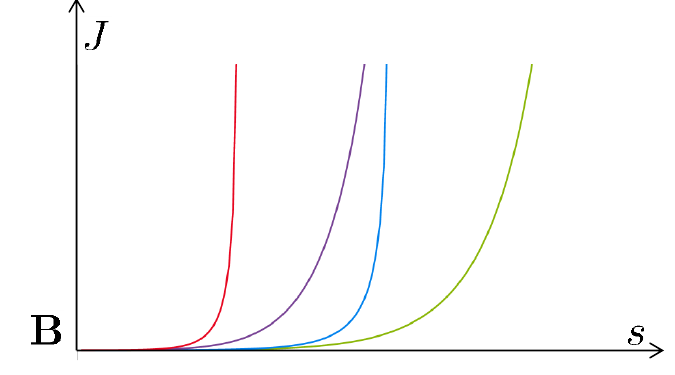}
\end{minipage}
\centering
\caption{\textbf{A:} Cuspless projections of sub-Riemannian geodesics in problem $\Pmec$ of different types. U-curves are depicted in green and blue colors, and S-curves are depicted in red and purple.
\textbf{B:} Plot of the Jacobian of exponential map, corresponding to these geodesics. We see that the Jacobian is positive (even increasing) for all $s \in (0,\smax)$. This supports our conjecture that conjugate points are absent before the first cusp point. }
\label{fig:jacobian}
\end{figure}
\subsection{The Range of the Exponential Map $\Pcurve$}\label{ch:conj}

There are a number of restrictions on the possible terminal points reachable by sub-Riemannian geodesics of problem $\Pmec$ with cuspless spatial projection. Together such points form the range $\mathcal{R}$ of the exponential map of $\Pcurve$, recall (\ref{eq:rangePcurveSE3}). We present some special cases which help us to get an idea about the range of the exponential map of $\Pcurve$. Recall that Corollary~\ref{cor:10} gave us the possible terminal positions (at $z=0$) when the final direction is opposite to the initial direction.

Based on our numerical experiments, we pose the following conjecture which is analogous to a result in the 2D case of finding cuspless sub-Riemannian geodesics in $(\SEtwo,\tilde{\Delta},\tilde{G}_{1})$~\cite{DuitsSE2,Boscain2013a}.
\begin{con}\label{Conjecture}
Let the range of the exponential map defined in Definition \ref{def:exp} be denoted by $\mathcal{R}$ and let the domain $\mathcal{D}_0$ of the exponential map be given by~(\ref{domain}).

\begin{itemize}[leftmargin=25pt]
 \item $Exp : \mathcal{D}_0\rightarrow \mathcal{R}$ is a homeomorphism when $\mathcal{D}_0$ and $\mathcal{R}$ are equipped with the subspace topology.
 \item $Exp : int(\mathcal{D}_0)\rightarrow int(\mathcal{R})$ is a diffeomorphism. Here $int(S)$ denotes the interior of the set $S$.
\end{itemize}

The boundary of the range is given by $\partial\mathcal{R} = S_B \cup S_R \cup S_L$ with
\begin{align}\label{all}
                S_B &= \{Exp(\blambda(0),\smax(\blambda(0))) \, | \, \blambda(0)\in \mathcal{D}_0\} \mbox{ and }\\ \nonumber
                S_R &= \{Exp(\blambda(0),s) \, | \, \blambda(0)\in \mathcal{D}_0 \mbox{ and } \lambda_4(0)^2 + \lambda_5(0)^2 =1 \mbox{ and } s>0\}\\ \nonumber
                S_L &= \{(\mathbf{0},R)\in \SE \, | \, R\be_z\cdot\be_z\geq 0\}.
\end{align}
\end{con}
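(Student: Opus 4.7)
The plan is to establish the conjecture by adapting the strategy used in the two-dimensional analogue \cite{DuitsSE2,Boscain2013a}, combined with the explicit formulas in Theorem \ref{statcurv} and the symmetries in Corollary \ref{AllSym}. First I would verify that $\mathcal{D}$ has the structure of a $5$-dimensional manifold with corners (parameterized by $\blambda(0)\in\mathcal{C}$, which is $4$-dimensional, together with $l\in(0,s_{max}(\blambda(0))]$), and that $\widetilde{Exp}_e$ is smooth on $\mathrm{int}(\mathcal{D})$ by standard smooth dependence of the Hamiltonian flow \eqref{FofExp} on initial data. Continuity on the closure, including the boundary locus $l=s_{max}$, requires checking that the improper integral $\tx(s_{max})=\gc^{-1}\int_{0}^{s_{max}}\sqrt{1-\|\bw(\tau)\|^{2}}\,\dif\tau$ in \eqref{tildecoordx} converges (it does, since $\sqrt{1-\|\bw(\tau)\|^{2}}\sim \sqrt{s_{max}-\tau}$ near the cusp from Theorem \ref{sMax}), and similarly for the transverse components in \eqref{intglA} using Corollary \ref{phi}.

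The key technical step is \textbf{local non-degeneracy} of $\widetilde{Exp}_e$ on $\mathrm{int}(\mathcal{D})$: the Jacobian should not vanish there. By Remark \ref{abnormal} there are no abnormal extremals, so all geodesics are normal; by Remark \ref{conjug}, degeneracy of $\widetilde{Exp}_e$ is equivalent to the presence of a conjugate point. I would rule out interior conjugate points by computing $\partial \widetilde{Exp}_e/\partial(\blambda(0),l)$ using the explicit parametrization, exploiting that Corollary \ref{phi} writes the non-trivial factor as $\exp\!\left(\int_{0}^{s}A\right)$ with $A$ having the block-diagonal rotation-plus-scaling structure, so the Jacobian determinant factors into a product of terms that are manifestly non-zero for $l<s_{max}$ and $\|\bw(0)\|<1$. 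Once local injectivity is established, \textbf{global injectivity} follows by a proper-map argument: given an endpoint $(\bx_1,R_1)\in\mathrm{int}(\mathcal{R})$, one can recover the conserved quantities $\gc$ and $W$ from the geometry of the endpoint (e.g.\ by inverting \eqref{transform}, \eqref{tildecoordx} and the azimuthal angle $\phi(l)$ of Corollary \ref{phi}), then use the symmetries of Corollary \ref{AllSym} to reduce to the canonical choice of $\dbw(0)$ aligned with one axis, where the remaining two initial parameters $\|\bw(0)\|,\|\dbw(0)\|$ and the length $l$ are uniquely determined.

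For the \textbf{boundary characterization}, the three strata correspond to the three distinct ways the parameters in $\mathcal{D}$ can hit the boundary: $l=s_{max}$ gives geodesics that end at a cusp, hence $S_B$; the condition $\lambda_{4}(0)^{2}+\lambda_{5}(0)^{2}=1$ means $\|\bw(0)\|=1$ by Theorem \ref{ThLambda}, i.e.\ geodesics departing from a cusp, giving $S_R$; and the degenerate limit corresponds to geodesics that begin and end at a cusp. For this last family I would invoke Corollary \ref{znegcusp} to show that cusp-to-cusp geodesics can return only to the plane $z=0$ and, in conjunction with the planar reduction of Theorem \ref{coplan}, only to the point $\bx_1=\bzero$ (this matches the two-dimensional result on $S_L$ in \cite{DuitsSE2}). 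The rotational $SO(2)$ action on $\bw(0)$ (Corollary \ref{AllSym}) produces the full range of rotations $R$ with $R\be_z\cdot\be_z\geq 0$, yielding $S_L$.

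The hardest part will be \textbf{global injectivity} and the \textbf{exhaustiveness} of \eqref{all}: local arguments only show $\partial\mathcal{R}\subseteq S_B\cup S_R\cup S_L$, but showing equality requires ruling out interior Maxwell points (Remark \ref{conjug}) along cuspless geodesics. Here I would rely on the symmetry group $O(2)$ of Corollary \ref{AllSym} together with Lemma \ref{philim} (which bounds the accumulated ``twist'' by $\pi$, preventing the curve from winding around and returning). A degree-theoretic / invariance-of-domain argument, combined with the fact that $\widetilde{Exp}_e$ is proper (from the boundary behavior at $l=s_{max}$), then upgrades local injectivity to the homeomorphism statement; smoothness of the Jacobian then upgrades this further to the diffeomorphism on interiors.
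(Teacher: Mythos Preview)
The statement you are attempting to prove is labeled in the paper as a \emph{Conjecture}, not a theorem, and the paper does \emph{not} provide a proof. Immediately after stating it the authors write that ``the proof of this conjecture would be on similar lines as in Appendix~F of \cite{DuitsSE2}'', and in the Conclusion they state explicitly: ``we could not prove the conjecture as injectivity of the exponential map is hard to prove, providing a challenging problem for future research.'' So there is no ``paper's own proof'' to compare against; the authors offer only numerical evidence (Figures~\ref{coneplot}--\ref{coneplot_g}) and the analogy with the $SE(2)$ case.

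Your proposal is therefore not a proof but a reasonable \emph{program}, and it is essentially the program the authors already allude to. The genuine gaps are exactly the ones you flag yourself. First, your claim that the Jacobian ``factors into a product of terms that are manifestly non-zero'' is asserted, not shown: the explicit formulas in Theorem~\ref{statcurv} involve the elliptic integral $\tx(s)$ in \eqref{tildecoordx} and the angle $\phi(s)$ of Corollary~\ref{phi}, and the mixed partials with respect to $(\bw(0),\dbw(0),l)$ do not decouple in any obvious way; this is precisely the conjugate-point computation that remains open. Second, the absence of interior Maxwell points is not implied by the $O(2)$ symmetry of Corollary~\ref{AllSym} together with Lemma~\ref{philim}: the bound $|\phi|<\pi$ restricts the twist of a single geodesic but says nothing about two distinct geodesics reaching the same endpoint. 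Third, your identification of $S_L$ with ``cusp-to-cusp'' geodesics returning to $\bx_1=\bzero$ is not what Corollary~\ref{znegcusp} gives; that corollary shows such geodesics hit $z=0$ only when $W=0$ and $s=s_{max}$, but does not force $\bx_1=\bzero$, and in any case $S_L$ as written is the stratum at $\bx_1=\bzero$ with $R\be_z\cdot\be_z\geq 0$, which is more naturally the $l\to 0$ limit together with the residual $SO(2)$ fibre, not the cusp-to-cusp endpoints. In short: your outline matches the authors' intended strategy, but the substantive steps (non-vanishing Jacobian, global injectivity) are exactly what the paper leaves open.
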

Note that $S_B \in \mathcal{R}$ and $S_R \in \mathcal{R}$ but $S_L \not\in \mathcal{R}$.
\begin{figure}\centering
  \includegraphics[width=\hsize]{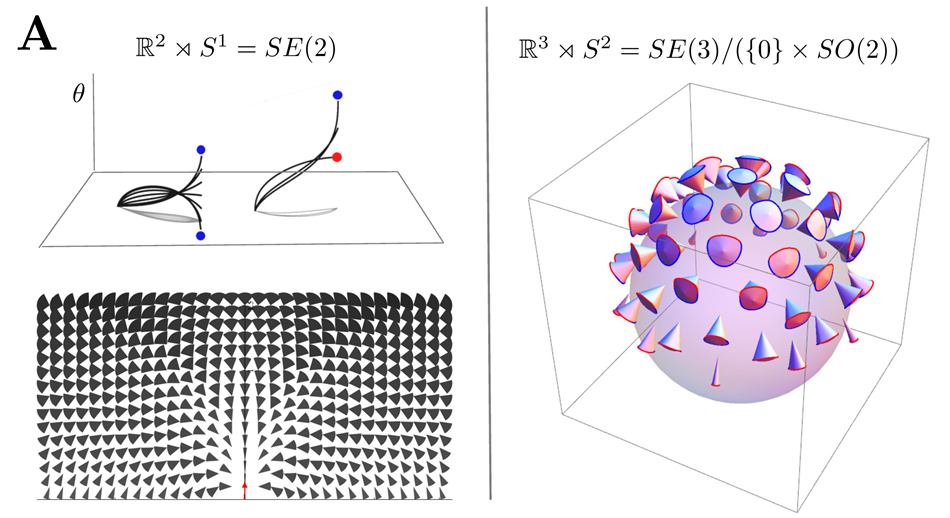}
\begin{subfigure}{.36\textwidth}
\centering
  \includegraphics[width=\textwidth]{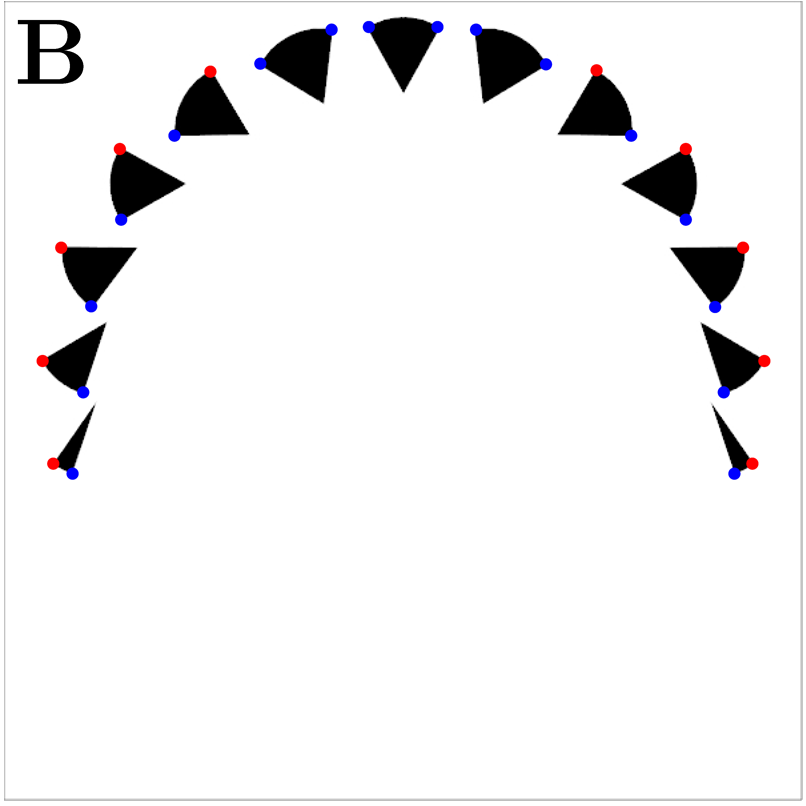}
\end{subfigure}%
\hspace{0.02\textwidth}
\begin{subfigure}{.28\textwidth}
\centering
  \includegraphics[width=\textwidth]{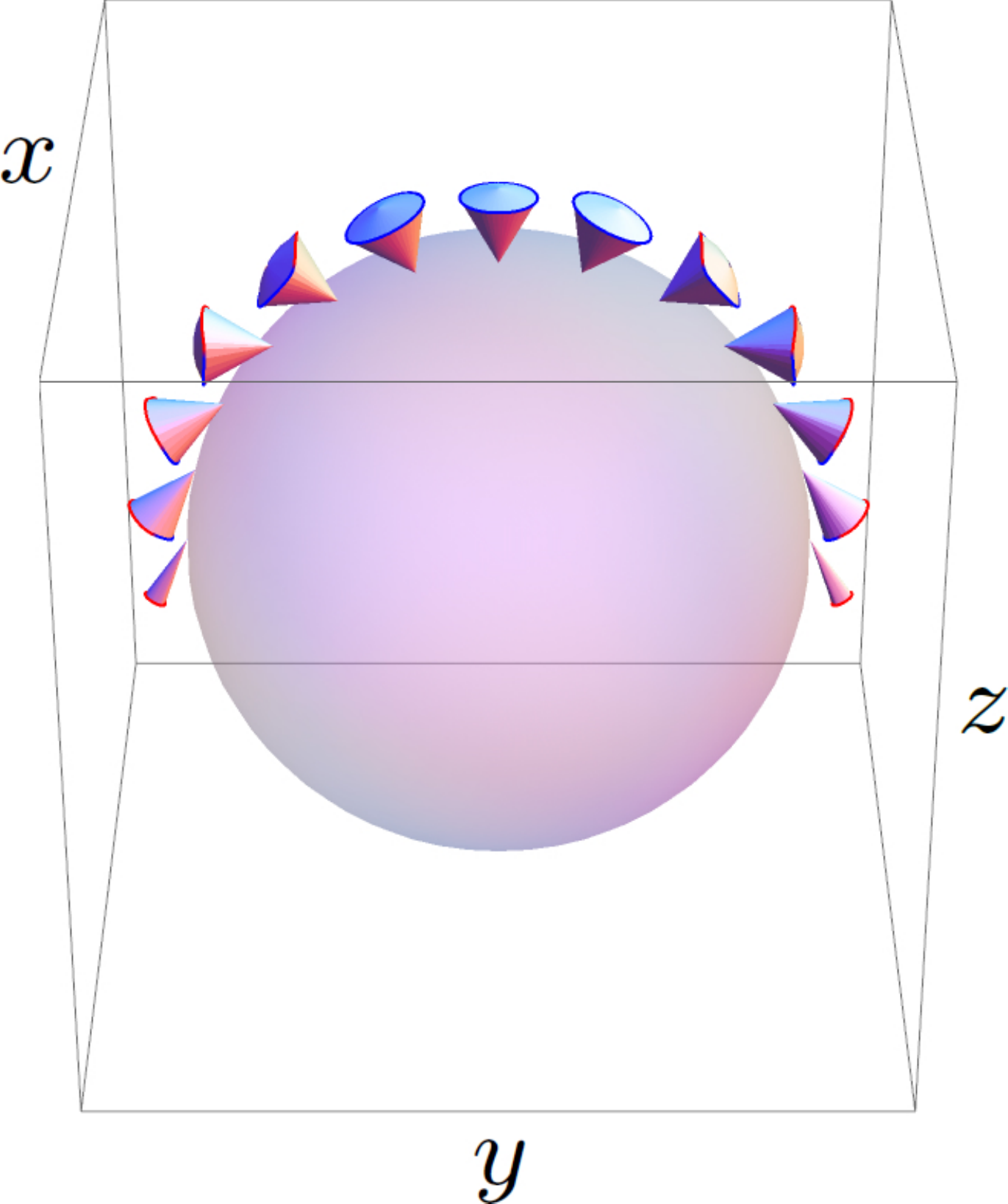}
\end{subfigure}
\caption{\textbf{A:} Comparison of the possible end conditions of $\mathbf{P_{curve}}$ for the 2D and 3D case. On the right, possible tangent directions of cuspless sub-Riemannian geodesics with unit length departing from the origin in the direction of $\be_z$ are depicted. In the $\SEtwo$ case (left) within $(\SEtwo,\tilde{\Delta},\tilde{G}_{1})$ studied in \cite{DuitsSE2}, this set of possible directions at each point is a connected cone \cite[Thm.6\&9]{DuitsSE2}. The boundary is obtained by end conditions of geodesics that either begin with a cusp point (shown in red) or end at a cusp point (shown in blue). \textbf{B:} Comparison in the special case when we set the end conditions on a half unit circle.}
\label{coneslice}
\end{figure}
\begin{figure}\centering
  \includegraphics[width=0.5\textwidth]{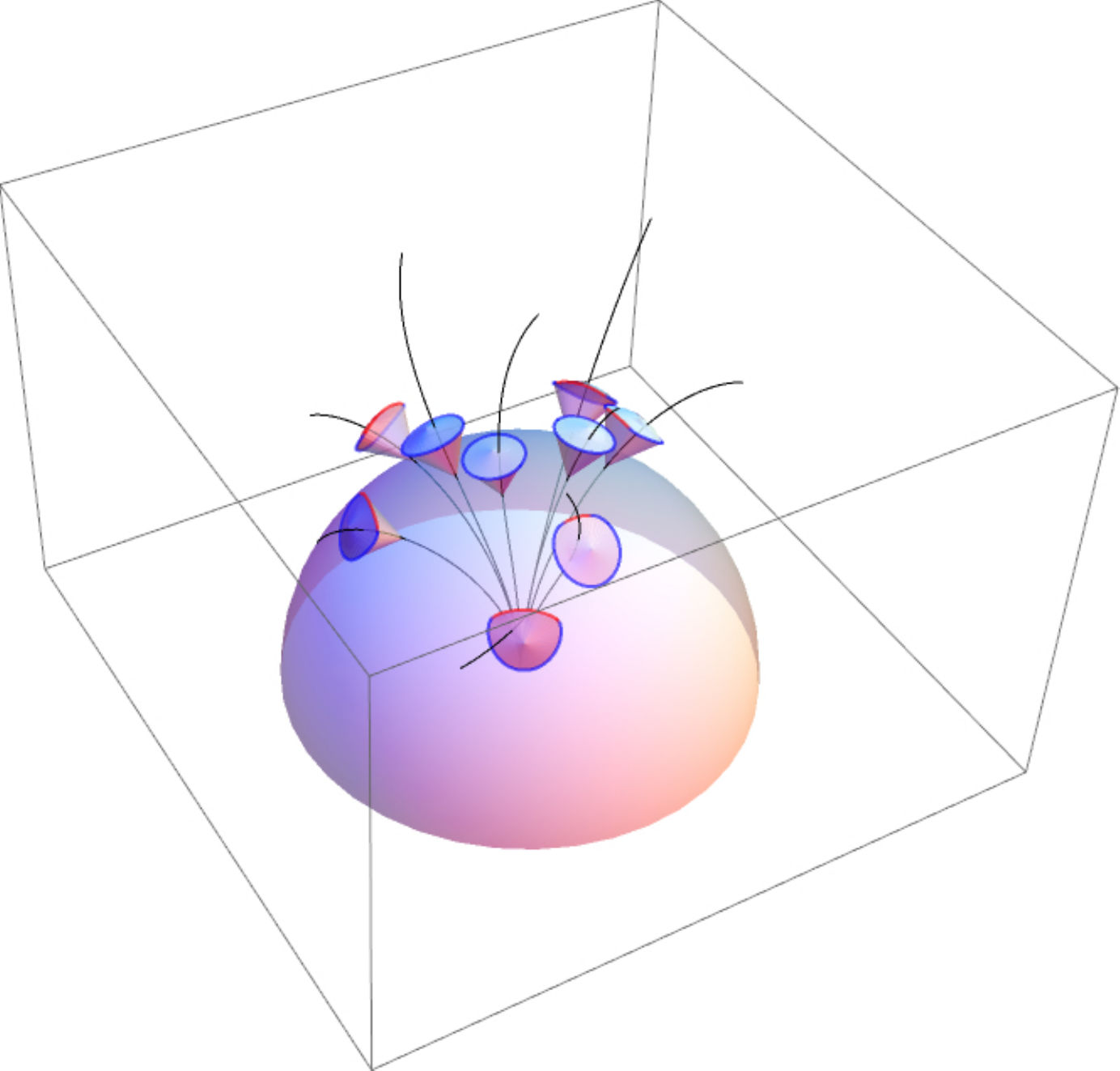}
\caption{The spatial part of arbitrary cuspless geodesics in $(\SE,\Delta,\mathcal{G})$ and the cones of reachable angles as depicted in Figure \ref{coneslice}. Note that the cuspless geodesics are always contained within the cones. We checked this for many more cases, which supports our Conjecture \ref{Conjecture}.}
\label{coneplot_g}       
\end{figure}
This conjecture would imply that no conjugate points arise within $\mathcal{R}$ and the problem $\mathbf{P_{curve}}$ \eqref{goal2} is well posed for \emph{all} end conditions in $\mathcal{R}$.
The proof of this conjecture would be on similar lines as in Appendix F of \cite{DuitsSE2}. If the conjecture is true, we have a reasonably limited set of possible directions per given end positions for which a cuspless sub-Riemannian geodesic of problem $\Pcurve$ exists. Then the cones of admissible end conditions for $\Pcurve$ (recall Definition~\ref{DEF:AM}) in Figure~\ref{coneslice}(A) form the image of the boundary of the phase space of $\{\ulh{2}(0),\ulh{1}(0)\}$ under the exponential map. Recall Figure~\ref{phaseplot}. These cones represent the boundary of the possible reachable angles by stationary curves of problem $\Pcurve$. Figure~\ref{coneslice}(B) shows the special case of the end conditions being on a unit circle containing the $z$-axis. The final tangents are always contained within the cones at each position. Numerical computations indeed seem to confirm that this is the case (see Figure \ref{coneplot_g}). The blue points on the boundary of the cones correspond to $S_B$ while the red points correspond to $S_R$ given in Eq.~\!(\ref{all}).
\subsection{Solving the Boundary Value Problem Associated to $\Pcurve$}  \label{ch:BVP}
Using explicit formulas for sub-Riemannian geodesics obtained in Theorem~\ref{statcurv} in Section~\ref{sec:ExplictFormulas} we developed a \emph{Wolfram Mathematica} package, that numerically solves the boundary value problem (BVP) associated to $\Pcurve$. The package is available by link \url{http://bmia.bmt.tue.nl/people/RDuits/final.rar}.
Note, that the BVP can be also tackled by a software \texttt{Hampath}~\cite{hampath} developed to solve optimal control problems. \texttt{Hampath} is based on numerical integration of a Hamiltonian system of PMP and second order optimality conditions. In contrast to \texttt{Hampath} our program does not involve numerical integration, and is based on numerical solution of a system of algebraic equations in Theorem~\ref{statcurv}, and relies on shooting based on the exact formulas, for details see~\cite{GhoshDuitsArxiv}.
\section{Conclusion}
In this article, we have derived explicit exact formulas of the geodesics of problem $\Pcurve$ in Theorem~\ref{statcurv} and Corollary~\ref{cor:aftermain}, where because of a scaling homothety we can set $\ksi=1$. We have shown in Theorem~\ref{lemma:One} that they are spatial projections of special cases of sub-Riemannian geodesics within $(\SE,\Delta, \mathcal{G}_{\ksi=1})$ whose spatial projections are cuspless. We have characterized the set $\mathcal{R}$ of admissible end conditions for problem $\Pcurve$ in Corollary~\ref{cor:main}. In Theorem \ref{th:integrability}, we have proved Liouville integrability for the corresponding sub-Riemannian problem.
In Theorem~\ref{sMax}, we have computed the first cusp time $t_{cusp}^{1}=t(\smax)$ of the sub-Riemannian geodesics explicitly.
We have shown the following geometric properties of geodesics of $\Pcurve$:
\begin{itemize}
\item global bounds on torsion in Theorem~\ref{ThVM},
\item they are planar if and only if the boundary conditions are coplanar, cf.~\!Theorem~\ref{coplan},
\item planar geodesics are globally optimal, cf.~\!Corollary~\ref{cor:embedSe2},
\item cuspless geodesics do not self intersect or roll up, cf.~\!Corollary~\ref{cor:mon},
\item they stay in the half space prescribed by the orientation of the initial condition (i.e.~$z\geq 0$ if $\ul{n}_{0}=\ul{e}_{z}$),
    which was formally shown for most cases (cf.~\!Corollaries~\ref{th:A},\ref{cor:zpw0} and \ref{th:B}).
    Also we analyzed cases where the plane $z=0$ is reached,
    cf.~Corollary~\ref{th:B},
\item their rotational and reflectional symmetries in Theorem~\ref{AllSym}.
\end{itemize}
Finally, we provided a numerical \emph{Mathematica} package to solve the boundary value problem via a shooting algorithm. We included numerical support for the expected absence of conjugate points on the sub-Riemannian geodesics (with $\h_{6}(0)=0$) with cuspless spatial projections, i.e. $t_{conj}\geq t_{cusp}^{1}$. Also, numerical computations on the cones of reachable angles (and their boundaries, cf.~Figure~\ref{coneslice}) seem to reveal the same homeo/diffeomorphic properties of the exponential map integrating the canonical ODE's in Pontryagin maximum principle, that were formally shown on the $\SEtwo$ case \cite[Thm.6\&9]{DuitsSE2}. In future work, we plan to analyze the sub-Riemannian spheres via viscosity solutions of sub-Riemannian HJB-equations, i.e. extend the work~\cite{bekkers} to the $\SE$ case. This will yield a numerical computation of the sub-Riemannian spheres and the 1st Maxwell-set.
\section*{Acknowledgments}
The research leading to these results has received funding from the European
Research Council under the European Community's Seventh Framework Programme
(FP7/2007-2013) / ERC grant \emph{Lie Analysis}, agr.~nr.~335555.
The authors gratefully acknowledge EU-Marie Curie project \emph{MANET}
ag.~no.607643 and the European Commission ITN-FIRST, agreement No. PITN-GA-2009-238702 for financial support. We gratefully acknowledge Prof. Yu.L. Sachkov for suggestions on the Liouville integrability and the co-adjoint orbit structure, and for many other suggestions on the article
and its exposition. We thank Dr.ir. A.J.E.M. (Guido) Janssen for fruitful discussions on  elliptic integral representation in Theorem~\ref{statcurv}, and we thank the anonymous reviewers for many valuable suggestions.
\appendix
\section{Proof of Theorem~\ref{lemma:One}}\label{app:A}
Here we rely on the formulation of problem $\PMEC$ using the control variables as given by~(\ref{eq:PMECStatControl}) in Section~\ref{sec:PMEC}. To this end we note that $\langle \omega^i|_{\gamma}, \dot{\gamma} \rangle= u^i$ for $i \in \{3,4,5\}$.
So for $i=3$ we have $\langle \omega^3|_{\gamma},\dot{\gamma}\rangle >0 \desda u^3 >0$. Particularly, this holds for a smooth minimizer $\gamma=\gamma^*$ of problem $\PMEC$.

If the end-condition $g=g_1=(\ul{x}_1,R_{1})$ in (\ref{distance}) is chosen such that the optimal control $u_{3}(t)> 0$ for $t \in (0,T)$, then $\frac{ds}{dt}(t)>0$ and the minimizer is parameterizable
by spatial arclength $s$. Let $\gamma$ be a horizontal curve in $(\SE,\Delta, \mathcal{G}_{\xi})$.
We define $\overline{\gamma}(s):=\gamma(t(s))=(\ul{x}(s),\ul{n}(s))$, and $\overline{u}^{k}(s)=u^{k}(t(s))$ and
let us recall $\gamma$ is horizontal, i.e.
\[
\begin{array}{l}
\dot{\gamma}(t)= \sum \limits_{i=3}^{5} u^{i}(t) \left.\mathcal{A}_{i}\right|_{\gamma(t)}, \
\overline{\gamma}'(s)= 1 \left. \mathcal{A}_{3} \right|_{\overline{\gamma}(s)} + \overline{u}^{4}(s)
\left. \mathcal{A}_{4} \right|_{\overline{\gamma}(s)} + \overline{u}^{5}(s) \left. \mathcal{A}_{5} \right|_{\overline{\gamma}(s)}.
\end{array}
\]
Lifting of a curve $\overline{\ul{x}}(\cdot)$ to a curve $(\ul{x}(\cdot),\ul{n}(\cdot))$ into $\R^{3}\rtimes S^{2}$ is done
by setting $\ul{x}'(s)=\ul{n}(s)$.
Let $c_{i,j}^{k}$ denote the usual structure constants of the Lie algebra of $\SE$ (see Table~\ref{tabcomA}), then
{\small
\[
\ul{x}''(s)=\ul{n}'(s)= \frac{d}{ds} \ul{x}'(s)= \frac{d}{ds} \left.\mathcal{A}_{3}\right|_{\overline{\gamma}(s)}=
\sum \limits_{j,k=1}^{3} c_{j,3}^{k}\, \langle
\left.\omega^{j}\right|_{\overline{\gamma}(s)} , \overline{\gamma}'(s) \rangle  \left.\mathcal{A}_{k}\right|_{\overline{\gamma}(s)}=
-\overline{u}^{4}(s)\, \left.\mathcal{A}_{2}\right|_{\overline{\gamma}(s)} + \overline{u}^{5}(s)  \left.\mathcal{A}_{1}\right|_{\overline{\gamma}(s)}.
\]
}
Direct computation of the Frenet-Seret ODE along horizontal curves (cf.~\!\cite[ch:2,Thm3.16]{GhoshDuitsArxiv}) yields
the following
expressions for curvature magnitude, and torsion magnitude:
\[
\begin{array}{l}
|\kappa(s)|^2=|\overline{u}^{4}(s)|^2+ |\overline{u}^{5}(s)|^2, \
\tau(s)= \frac{\overline{u}^{4}(s) (\overline{u}^{5})'(s) - \overline{u}^{5}(s) (\overline{u}^{4})'(s)}{|\overline{u}^{4}(s)|^2+ |\overline{u}^{5}(s)|^2} .
\end{array}
\]
Furthermore, we have $\|(\ul{x}'(s)\|=\overline{u}^{3}(s)=1$, and we see that the energy functionals of $\mathbf{P_{curve}}$ and $\mathbf{P_{mec}}$ coincide, as we have
\begin{equation}\label{agree}
\int \limits_{0}^{T} \sqrt{\left.\mathcal{G}_{\xi}\right|_{\gamma(t)}(\dot{\gamma}(t),\dot{\gamma}(t)) } \, {\rm d}t = \int \limits_{0}^{L} \sqrt{\left.\mathcal{G}_{\xi}\right|_{\overline{\gamma}(s)}(\overline{\gamma}'(s),\overline{\gamma}'(s)) } \, {\rm d}s = \int \limits_{0}^{L} \sqrt{\kappa^{2}(s)+\xi^2} \, {\rm d}s.
\end{equation}
Application of PMP (scf.~\!Section~\ref{ch:PMP}) to $\PMEC$ 
yields the following ODE for the horizontal part
$$\dgamma = \h^3\cA_3|_{\gamma} + \h^4\cA_4|_{\gamma} + \h^5\cA_5|_{\gamma},$$
and for the vertical part, we obtain the ODE
\begin{equation*}\label{PreL}
\begin{array}{l}
\frac{\dif}{\dif t}\h_{i}= -\sum \limits_{b=3}^{5}\sum \limits_{l=1}^{6} c^{l}_{i,b} \h^{b} \h_{l}  \
\desda \\
\frac{\dif}{\dif t}(\h_1,\h_2,\h_3,\h_4,\h_5,\h_6) =
(\h_3\h_5, \h_3\h_4, \h_1\h_5 - \h_2\h_4,
\ksi^{-2}\h_3\h_2 -\h_5 \h_6, -\ksi^{-2}\h_3\h_1+\h_4 \h_6,0).
\end{array}
\end{equation*}
Note that \emph{reciprocal} momentum components are related by the inverse metric tensor and thereby given by $\h^{3}=\xi^{-2}\h_3$, $\h^{4}=\h_4$, $\h^{5}=\h_5$.
PMP gives us that the stationary curves obtained via these ODE's are short time local minimizers.
It also provides us the Hamiltonian $H(\h)= \frac{1}{2}\left(\xi^{-2}\h_{3}^{2} + \h_{4}^{2} +\h_{5}^{2}\right)$
and the Exponential map.

Now we must choose $\gamma(L)=g_{1} \in \SE$ from the equivalence class $[g_1]=\{g \in \SE \;|\; g \sim g_{1}\}$ (i.e. left-coset recall (\ref{leftcoset}))
such that the minimum in (\ref{onthequotient}) is attained.  This does not hold for all elements in $\SE$.
In fact it only holds for those end conditions that can be reached with geodesics having $\h_{6}(0)=0$. This follows
from the fact that along all sub-Riemannian geodesics one has $\dot{\h}_6=0$ and the fact that the sub-Riemannian minimizers
with $\h_{6}=0$ are precisely the ones where the constraint $\omega^{6}=0$ is redundant and the result follows.
See also Figure~\ref{fig:lemma}
$\hfill \Box$.
\section{Cartan connection $\overline{\nabla}$ on sub-Riemannian manifold $(\SE, \Delta, \mathcal{G}_{\xi})$}\label{app:Cartan}
In this appendix we provides background/embedding of Definition~\ref{def:CC} and Theorem~\ref{th:2}, and in particular Eq.~\!(\ref{CC}) and Eq.~\!(\ref{imp1}), into theory of Cartan connections.

The $-$ Cartan connection \cite{CS} is induced by the Maurer-Cartan form $(L_{g^{-1}})_*$ which induces a Cartan-Ehresmann connection on the principal G-bundle $P = (\SE, \{e\}, \pi, R)$, with total space $\SE$, base space $\{e\}= \SE/\SE$, projection $\pi(g) = e$ and the right action $R_{g_1}g_2 = g_2 g_1$.

The construction is as follows. The Maurer-Cartan form induces a connection $\tilde{\omega}$ on the associated vector bundle $\SE \times_{Ad} \mathcal{L}(\SE)$, where $\mathcal{L}(\SE)$ denotes the Lie algebra of left-invariant vector fields,  given by
$ \tilde{\omega} = \sum_{j=1}^{6} (Ad)_{\ast} \cA_j \otimes \omega^{j} = \sum_{i,j,k=1}^{6} c_{i,j}^{k} \cA_k \otimes \omega^i \otimes \omega^j$.
The form $\tilde{\omega}$ induces a matrix-valued 1 form $-\tilde{\omega}(\omega^k, \cdot, \cA_j)$ on the frame bundle, and moreover it induces a connection $\nabla$ on tangent bundle $T(\SE)$, where $\nabla_{(\sum_{i=1}^{6} \dot{\gamma}^i \cA_i)} (\sum_{k=1}^{6}a^k \cA_k) = \sum_{k=1}^{6}\left( \dot{a}^k - \sum_{i,j=1}^{6} c_{i,j}^k \dot{\gamma}^i a^j\right) \cA_k.$
This is all still in the Riemannian setting.

In the sub-Riemannian setting of $(\SE,\Delta,\mathcal{G}_{\xi})$, one relies on a different structure subgroup $\widetilde{\SEtwo}$ (consisting of translations and rotations in the $xy$-plane only) isomorphic to $\SEtwo$, rather than structure group $\SE$ in the Riemannian setting.
This boils down to constraining
some of the summation indices and therefore we use $\overline{\nabla}$ given by~(\ref{CC}) instead of $\nabla$. 
Next, we explain how partial connection $\overline{\nabla}$ appears in Cartan geometry.

In the sub-Riemannian manifold $(\SE, \Delta, \mathcal{G}_{\xi})$, with $\Delta={\rm ker}\{\omega^1\}\bigcap {\rm ker}\{\omega^2\}\bigcap {\rm ker}\{\omega^6\}$,
the directions $\cA_1$, $\cA_2$ and $\cA_6$ are prohibited. To get a better grasp on what this means on the manifold level, we consider principal fibre bundles.
To this end, we consider the subgroup isomorphic to $\SEtwo$ given by $\widetilde{\SEtwo}=\{\exp{(c^1A_1 + c^2A_2 + c^6A_6)}|c^1,c^2, c^6\in \R\}$ with $A_k=\cA_k|_e$.

Now we consider the principal fibre bundle $\overline{P}=(\SE,\SE/\widetilde{\SEtwo},\pi,R)$ with $R_hg=gh$, $h\in \widetilde{\SEtwo}$, $\pi (g)=[g]= g \, \widetilde{\SEtwo} \in \SE/\widetilde{\SEtwo}$. On $\overline{P}$, we consider the Maurer-Cartan form $\bar{w}=(L_{h^{-1}})_*$, more precisely
$
\bar{w}(\cA_g) = \sum_{i=3}^{5}\langle\omega^i|_g,X_g\rangle\cA_i
$.

Via the group representation $\widetilde{\SEtwo}\ni h\mapsto Ad(h):= (L_{h^{-1}}R_h)_*$, we obtain the associated vector bundle (def. 3.7 in \cite{DuitsAMS2}) $(\SE\times_{Ad|_{\widetilde{\SEtwo}}}\cL(\SE))$ with corresponding connection form
\[
\bar{w}=\sum_{j=3}^{5}\left(Ad|_{\widetilde{SE(2)}}\right)_*(\cA_j)\otimes\omega^j = \sum_{j=3}^{5}ad(\cA_j)\otimes\omega^j = \sum_{i,j,k=3}^{5}c^k_{i,j}\cA_k\otimes\omega^i\otimes\omega^j,
\]
where $ad(X)=[\cdot,X]$.
This yields a $3\times 3$ matrix valued one form on the frame bundle
$
\bar{w}^k_j = -\bar{w}(\omega^k,\cdot ,\cA_j)
$
which yields a partial connection on the horizontal part $\Delta$ of $T(SE(3))$:
\begin{align}\label{lazy}
\overline{\nabla}_X\cA &= \sum_{k=3}^{5}\left(\dot{a}^k\cA_k + \sum_{j=3}^{5}a^j \bar{w}^k_j(X)\cA_k\right)
= \sum_{k=3}^{5}\left(\dot{a}^k\cA_k + \sum_{i,j=3}^{5}a^j \dot{\gamma}^i c^k_{j,i}\cA_k\right),
\end{align}
with $X=\sum_{i=3}^{5}\dot{\gamma}^i\cA_i$, $\cA=\sum_{k=3}^{5}a^k\cA_k$ and $\bar{w}^k_j(\cA_i) = -\bar{w}(\omega^k,\cA_i,\cA_j) = -c^k_{i,j}$ where the Christoffels are equal to minus the structure constants of the Lie algebra and where $\dot{a}^k = \sum_{i =3}^{5}\dot{\gamma}^i(\cA_i|_{\gamma}a^k)$.

Finally, Eq.~\!(\ref{lazy}) is equivalent to Eq.~\!(\ref{CC}), as $c^{k}_{i,j}=-c^{k}_{j,i}$. As shown in the proof of Theorem~\ref{th:2}
partial connection $\overline{\nabla}$ on the tangent bundle induces a partial connection $\overline{\nabla}^{*}$ on the cotangent bundle (given by the left hand side of (\ref{imp1})).




%
%

\end{document}